\theoremstyle{plain}
\newtheorem{theorem}{Theorem}[subsection]
\newtheorem{corollary}[theorem]{Corollary}
\newtheorem{lemma}[theorem]{Lemma}
\newtheorem{proposition}[theorem]{Proposition}
\newtheorem*{problem}{Problem}
\theoremstyle{definition}
\newtheorem{definition}{Definition}[subsection]
\newtheorem{example}[definition]{Example}
\theoremstyle{remark}
\newtheorem{remark}[definition]{Remark}
\DeclareMathOperator{\Spec}{Spec}
\DeclareMathOperator{\supp}{supp}
\DeclareMathOperator{\kk}{\textbf{k}}
\DeclareMathOperator{\KK}{K} %Macro for the alg. clos. of \kk(t) in Subsection 5.5
\DeclareMathOperator{\rad}{rad}
\DeclareMathOperator{\Gal}{Aut} %Making things characteristic free
\DeclareMathOperator{\Hom}{Hom}
\DeclareMathOperator{\Bl}{Bl}
\DeclareMathOperator{\id}{id}
\DeclareMathOperator{\MMM}{\mathscr{M}}
\DeclareMathOperator{\NNN}{\mathscr{N}}
\DeclareMathOperator{\pr}{\textrm{pr}}
\newcommand{\OOO}{\mathcal O}
\newcommand{\mm}{\mathfrak m}
\newcommand{\nn}{\mathfrak n}
\newcommand{\pp}{\mathfrak p}
\newcommand{\QQ}{\mathbb{Q}}
\newcommand{\NN}{\mathbb{N}}
\newcommand{\ZZ}{\mathbb{Z}}
\newcommand{\PP}{\mathbb{P}}
\renewcommand{\AA}{\mathbb{A}}
\begin{document}

\title{On Maximal Subalgebras}
\author{Stefan Maubach and Immanuel Stampfli}
\address{Jacobs University Bremen gGmbH, School of Engineering and 
Science, Department of Mathematics, Campus Ring 1, 28759 Bremen, Germany}
\email{stefan.maubach@gmail.com, immanuel.e.stampfli@gmail.com}

\keywords{Commutative Algebra, Integral Domains, Valuations}
\subjclass[2010]{13B02, 13B30, 13G05, 13A18 (primary), 
			  and 14R10, 14H50 (secondary)}

\begin{abstract}
	Let $\kk$ be an algebraically closed field.
	We classify all maximal $\kk$-subalgebras of any 
	one-dimensional finitely generated $\kk$-domain.
	In dimension two, we classify all maximal 
	$\kk$-subalgebras of $\kk[t, t^{-1}, y]$. 
	To the authors' knowledge, this is the first such classification result 
	for an algebra of dimension $> 1$. 
	In the course of this study, we classify also all maximal
	$\kk$-subalgebras of $\kk[t, y]$ that contain a coordinate. 
	Furthermore, 
	we give examples of maximal $\kk$-subalgebras of $\kk[t, y]$
	that do not contain a coordinate.
\end{abstract}

\maketitle

\vspace{-0.075cm}

\section{Introduction}

All rings in this article are commutative an have a unity. 
A \emph{minimal ring extension} 
is a non-trivial ring extension that does not allow a proper intermediate ring. 
%To our knowledge, the research on minimal ring extensions started with 
%Gilmer and Heinzer in \cite{GiHe1967Intersections-of-q}. Later, Levelt studied in 
%\cite{Le1969Foncteurs-exacts-a} certain minimal
%ring extensions in order to explore pro-representable functors.
A good overview of minimal ring extensions can be found in 
\cite{PiPi2006About-minimal-morp}.
%For a special type of rings, this notion was 
%first studied by Levelt in \cite{Levelt1} and \cite{Levelt2} 
%in order to study the pro-representability of certain functors in 
%algebraic geometry. 
A first general treatment of minimal ring extensions
was done by Ferrand and Olivier in 
\cite{FeOl1970Homomorphisms-mini}. They came up with the following
important property of minimal ring extensions.

\begin{theorem}[see {\cite[Th\'eor\`eme~2.2]{FeOl1970Homomorphisms-mini}}]
	%\label{FandOmain.thm}
	Let $A \subsetneq R$ be a minimal ring extension and let 
	$\varphi \colon \Spec(R) \to \Spec(A)$ be the induced morphism on spectra. 
	Then there exists 
	a unique maximal ideal $\mm$ of $A$ such that $\varphi$ 
	induces an isomorphism
	\[
		\Spec(R) \setminus \varphi^{-1}(\mm) 
		\stackrel{\simeq}{\longrightarrow} \Spec(A) \setminus \{ \mm \} \, .
	\]
	Moreover, the following statements are equivalent: 
	\begin{enumerate}[i)]
	 	\item The morphism $\varphi \colon \Spec(R) \to \Spec(A)$ is surjective;
		\item The ring $R$ is a finite $A$-module;
		\item We have $\mm = \mm R$.
	\end{enumerate}
\end{theorem}

Let $A \subsetneq R$ be a minimal ring extension. Then $A$ is called a 
\emph{maximal subring} of $R$.
In the case where $\Spec(R) \to \Spec(A)$ is non-surjective, 
we call $A$ an \emph{extending}\footnote{
Since \cite{FeOl1970Homomorphisms-mini} 
proves that in this case $f$ is a flat epimorphism, 
the literature calls this sometimes the ``flat epimorphism case".} maximal subring
of $R$ and otherwise, we call it a \emph{non-extending}\footnote{
The literature calls this sometimes the ``finite case".}
maximal subring.
Moreover, the unique maximal ideal $\mm$ of $A$ (from the theorem above) 
is called the \emph{crucial maximal ideal}. In this article we are interested in the description of all maximal subrings
of a given ring $R$.

Since in the non-extending case $R$ is a finite $A$-module, one might suspect, 
that this is not such a difficult case. 
Indeed, in 
Section~\ref{non-extending.sec} we provide a classification of all
non-extending maximal subrings of an arbitrary ring (up to the classification
of all maximal subfields of a given field).
In fact, Dobbs, Mullins, Picavet and Picavet-L'Hermitte gave in 
\cite{DoMuPi2005On-the-FIP-propert} 
already such a classification. The novelty of our approach is that
we explicitly construct all 
the non-extending maximal subrings of a given ring.

Thus, we are left with the extending case.
One can reduce the study to the case where $R$ is an integral domain, 
and this is explained in Section~\ref{GeneralConsiderations.sec}. 
Moreover, in
the same section, we give some general properties of maximal subrings 
that we will use often in the course of this article. 
As the classification of all extending maximal subrings of an 
arbitrary integral domain still seems to be a difficult task, we restrict ourselves
to the case where $R$ is a finitely generated domain over an 
algebraically closed field $\kk$ (of any characteristic). 
Thus our guiding problem is the following.

\begin{problem} 
	Classify all extending maximal subalgebras of a given 
	affine $\kk$-domain where $\kk$ is an algebraically closed field.
\end{problem}

If the affine $\kk$-domain is one-dimensional, then we are able to describe all 
extending maximal $\kk$-subalgebras. 
%\tr{(and with that all maximal subalgebras).} 
This description is provided in Section~\ref{OneDimensionalCase.sec}. 
Let us give a simple example.

\begin{example} If $R = \kk[t, t^{-1}]$,
then the only extending maximal $\kk$-sub\-algebras are $\kk[t]$ and 
$\kk[t^{-1}]$. In fact, $\PP_{\kk}^1$ is a smooth projective closure of 
$\AA_{\kk}^\ast$. If we identify $\AA_{\kk}^\ast$ with the image under 
the open immersion
\[
	\AA_{\kk}^\ast \longrightarrow \PP^1_{\kk} \, , \quad
	t \longmapsto (t: 1) \, ,
\] 
then $\kk[t]$ is the subring of functions on 
$\AA_{\kk}^\ast$ that are defined at
$(0:1) \in \PP^1_{\kk}$ and $\kk[t^{-1}]$ is the subring of those functions on 
$\AA_{\kk}^\ast$
that are defined at $(1:0)$.
\end{example}

This example is an instance of the general description.

\begin{theorem}[see Theorem~\ref{thm:OneDimensional}]
	Let $R$ be a finitely generated one-dimensional $\kk$-domain. 
	Take a projective closure $\overline{X}$ of the affine curve $X = \Spec(R)$
	such that $\overline{X}$ is non-singular at every point of
	$\overline{X} \setminus X$. If $\overline{X} \setminus X$ contains just 
	a single point, then $R$ has no extending maximal $\kk$-subalgebra. 
	Otherwise, for any point 
	$p \in \overline{X} \setminus X$,
	\[	
		\{ \, f \in R \ | \ \textrm{$f$ is defined at $p$} \, \}
	\]
	is an extending maximal $\kk$-subalgebra of $R$ and every extending 
	maximal $\kk$-subalgebra of $R$ is of this form.
\end{theorem}

In dimension two, the most natural algebra to study is the polynomial
algebra in two variables $\kk[t, y]$.
Using the classification of 
extending maximal subalgebras of a one-dimensional affine $\kk$-domain,
we give in Section~\ref{sec.ExamplesContainNotACoordinate} 
plenty examples of extending maximal subalgebras of $\kk[t, y]$
that do not contain a coordinate of $\kk[t, y]$, 
i.e.~they do not contain a polynomial in $\kk[t, y]$ which 
is the component of an automorphism of $\AA^2_{\kk}$. 
These examples indicate that it is
difficult to classify \emph{all} extending maximal subalgebras of $\kk[t, y]$. 
Therefore, we impose more structure in the problem. 
Namely, we search for all extending maximal 
subalgebras of $\kk[t, y]$ that contain a coordinate of 
$\kk[t, y]$.

Another natural $2$-dimensional affine $\kk$-domain beside 
the polynomial algebra $\kk[t, y]$ is the localization of it in $t$, 
i.e. the $2$-dimensional domain $\kk[t, t^{-1}, y]$. This algebra is directly
related to our former problem, as it is isomorphic to the localization of
$\kk[t, y]$ in any coordinate of $\kk[t, y]$. In fact, in this article 
we classify all extending maximal subalgebras of $\kk[t, t^{-1}, y]$
and get in the course of this classification all extending maximal subalgebras
of $\kk[t, y]$ that contain a coordinate. This is the bulk of this article.
To the authors' knowledge, this is the first such classification result for 
an algebra of dimension $> 1$.

Let us give an instructive example, before we give more details on our results.
\begin{example}[see Lemma~\ref{lem:maxandcond}]
\label{exa:motivatingexample}
Let $\kk$ be an algebraically closed field and let $R = \kk[t, t^{-1}, y]$.
The ring
\[
	A=\kk[t] + y\kk[t, t^{-1}, y] =  \kk[t, y, y/t, y/t^2, y/t^3, \, \ldots]
\] 
is an extending maximal subalgebra of $\kk[t, t^{-1}, y]$.
The crucial maximal ideal of $A$ is given by 
\[
	\mm = (t, y, y/t, y/t^2, \ldots) \, .
\]
Thus $A \subseteq \kk[t, t^{-1}, y]$ induces an open immersion
$\AA^\ast_{\kk} \times \AA_{\kk}^1 \to \Spec(A)$ and the complement of the image
is just $\{ \mm \}$. Moreover, the morphism 
$\Spec(A) \to \AA^2_{\kk}$ induced by $\kk[t, y] \subseteq A$, 
sends the crucial maximal ideal $\mm$ to the origin $(0,0)$. 
So in some sense we ``added" to $\AA_{\kk}^{\ast} \times \AA_{\kk}^1$ the point 
$(0, 0) \in \{ 0 \} \times \AA_{\kk}^1$.

Another description of the affine scheme $\Spec(A)$ is the following: It
is the inverse limit of
	$\ldots {\longrightarrow} 
	\AA_{\kk}^2 \stackrel{\varphi}{\longrightarrow} 
	\AA_{\kk}^2 \stackrel{\varphi}{\longrightarrow} 
	\AA_{\kk}^2$
inside the category of affine schemes, where $\varphi(t, x) = (t, t x)$. 

A little more general, for any $\alpha \in \kk[t]$, the ring 
$\kk[t] + (y-\alpha)\kk[t, t^{-1}, y]$
is also an extending maximal subalgebra of $\kk[t, t^{-1}, y]$.
\end{example}

Towards the classification of all extending maximal subalgebras
of $\kk[t, t^{-1}, y]$, we describe in 
Section~\ref{ClassContainKty.sec} all extending maximal 
$\kk$-subalgebras of $\kk[t, t^{-1}, y]$ 
that contain $\kk[t, y]$. To formulate our results we introduce some
notation. Let $\kk[[t^\QQ]]$ be the Hahn field over $\kk$
with rational exponents, i.e. the field of formal power series
\[
	\alpha = \sum_{s \in \QQ} a_s t^s \quad 
	\textrm{such that 
	$\supp(\alpha) = \{ \, s \in \QQ \ | \ a_s \neq 0 \, \}$ is well ordered.} 
\]
Moreover, we denote by $\kk[[t^\QQ]]^+$ the subring 
of elements $\alpha \in \kk[[t^\QQ]]$ that satisfy $\supp(\alpha)Ê\subseteq [0, \infty)$.
By extending the scalars $\kk[t, t^{-1}]$ to the Hahn field $\kk[[t^\QQ]]$
one has a simple classification:

\begin{theorem}[see Corollary~\ref{cor:HahnClass} and 
	Remark~\ref{rem:HahnClass}]
	We have a bijection
	\[
		\kk[[t^\QQ]]^+ \longrightarrow
		\left\{ 
			\begin{array}{c}
				\textrm{extending maximal } \\
				\textrm{$\kk$-subalgebras of $\kk[[t^\QQ]][y]$} \\
				\textrm{that contain $\kk[[t^\QQ]]^+[y]$}
			\end{array} 
		\right\}
		\, , \quad
		\alpha \longmapsto \kk[[t^\QQ]]^+ + (y-\alpha) \kk[[t^\QQ]][y]
	\]
\end{theorem}

With the aid of this theorem, we are able to classify all
extending maximal $\kk$-subalgebras of $\kk[t, t^{-1}, y]$ that contain $\kk[t, y]$.

\begin{theorem}[see Theorem~\ref{thm:Uniqueness}]
	\label{MainthmC}
	Let $\mathscr{S}$ be the set of
	$\alpha \in \kk[[t^\QQ]]^+$ such that $\supp(\alpha)$
	is contained in a strictly increasing sequence of $\QQ$.
	Then we have a surjection 
	\[
		\mathscr{S} \longrightarrow 
		\left\{ 
		\begin{array}{c}
		 	\textrm{extending maximal} \\ 
			\textrm{$\kk$-subalgebras of $\kk[t, t^{-1}, y]$} \\
			\textrm{that contain $\kk[t, y]$}
		\end{array}
		\right\} \, ,
		\quad
		\alpha \longmapsto A_\alpha \cap \kk[t, t^{-1}, y]
	\]
	where
	\[
		A_{\alpha} = \kk[[t^\QQ]]^+ + (y-\alpha) \kk[[t^\QQ]][y] \, .
	\]
	Moreover, two elements of $\mathscr{S}$ are sent to the same 
	$\kk$-subalgebra,
	if and only if they lie in the same orbit under the natural action
	of $\Hom(\QQ/\ZZ, \kk^\ast)$ on $\mathscr{S}$.
\end{theorem}

In Section~\ref{classOfk[tt-1y].sec} we start with the description of \emph{all}
maximal $\kk$-subalgebras of $\kk[t, t^{-1}, y]$. Our main result of that section is
the following.

\begin{theorem}[see Proposition~\ref{prop:classOfk[tt-1y]}]
	\label{MainthmD}
	Let $A \subseteq \kk[t, t^{-1}, y]$ be an extending 
	maximal $\kk$-subalgebra. Then, exactly one of the following cases occur:
	\begin{enumerate}[i)]
	\item There exists an automorphism $\sigma$ of $\kk[t, t^{-1}, y]$
		such that $\sigma(A)$ contains $\kk[t, y]$;
	\item $A$ contains $\kk[t, t^{-1}]$.
	\end{enumerate}
\end{theorem}

The maximal $\kk$-subalgebras of case i) are described by 
Theorem~\ref{MainthmC}. Thus, we are left with the description of the
extending maximal $\kk$-subalgebras of $\kk[t, t^{-1}, y]$ that contain 
$\kk[t, t^{-1}]$. This will be done in Section~\ref{ClassWithCoord.sec}.
%With this classification at hand, we then describe all extending maximal 
%$\kk$-subalgebras $A$
%of $\kk[t, y]$ under the additional assumption that $A$ contains 
%a coordinate function of $\kk[t, y]$. This is done in 
%Section~\ref{ClassWithCoord.sec}. 
In order to state our result let us introduce some
notation. Let $\MMM$ be the set of extending maximal $\kk$-subalgebras of 
$\kk[t, y]$ that contain $\kk[t]$. Moreover,      
let $\NNN$ be the set of extending maximal $\kk$-subalgebras $A$ of 
$\kk[t, y, y^{-1}]$ that contain $\kk[t, y^{-1}]$ 
and such that
\[
	A  \longrightarrow \kk[t, y, y^{-1}] / (t-\lambda)
\] 
is surjective,
where $\lambda$ is the unique element in $\kk$ such that the crucial
maximal ideal of $A$ contains $t-\lambda$ (this $\lambda$ exists
by Remark~\ref{rem:TothesetN}). 
The set $\NNN$ is described by 
Theorem~\ref{MainthmC}. Then, the maximal $\kk$-subalgebras of case ii) in 
Theorem~\ref{MainthmD} are described by the following result.

\begin{theorem}[see Theorem~\ref{thm:MandN} 
and Proposition~\ref{prop:ExtMaxContainingtandtinv}]
	\label{MainthmE}
	With the definitions of $\MMM$ and of $\NNN$ from above, 
	we have bijections $\Theta$ and $\Phi$
	\[
		\NNN \stackrel{\Theta}{\longrightarrow}
		\MMM
%		\left\{
%			\begin{array}{c}
%				\textrm{extending maximal} \\
%				\textrm{$\kk$-subalgebras of $\kk[t, y]$} \\
%				\textrm{that contain $\kk[t]$}
%			\end{array}
%		\right\}
		\supseteq
		\left\{
			\begin{array}{c}
				\textrm{$B$ in $\MMM$ s.t. the crucial} \\
				\textrm{maximal ideal of $B$} \\ 
				\textrm{does not contain $t$}
			\end{array}
		\right\}
		\stackrel{\Phi}{\longleftarrow}
				\left\{
			\begin{array}{c}
				\textrm{extending maximal $\kk$-} \\
				\textrm{subalgebras of $\kk[t, t^{-1}, y]$} \\
				\textrm{that contain $\kk[t, t^{-1}]$}
			\end{array}
		\right\}
	\]
	given by $\Theta(A) = A \cap \kk[t, y]$
	and $\Phi(A') = A' \cap \kk[t, y]$. 
%	In fact, if $A \in \NNN$, then $\Spec A \cap \kk[t, y]$ is the glueing of 
%	 $\AA^2_{\kk}$ to $\Spec A$ via the open immersion
%	 $\AA_{\kk}^1 \times \AA^{\ast}_{\kk} \to \Spec A$ which is induced by 
%	 $A \subseteq \kk[t, y, y^{-1}]$.
\end{theorem}

In particular, with the aid of the bijection $\Theta \colon \NNN \to \MMM$ in 
Theorem~\ref{MainthmE} we get a description of the
extending maximal $\kk$-subalgebras of $\kk[t, y]$ that contain a coordinate
of $\kk[t, y]$.

%In the last section,
%we then describe all extending maximal $\kk$-subalgebras of 
%$\kk[t, t^{-1}, y]$ using the previous mentioned classifications.
%Our result is the following.
%
%\begin{maintheorem}[see Proposition~\ref{prop:classOfk[tt-1y]}]
%	Let $A \subseteq \kk[t, t^{-1}, y]$ be an extending 
%	maximal $\kk$-subalgebra. Then exactly one of the following cases occur.
%	\begin{enumerate}[i)]
%	\item There exists and automorphism $\sigma$ to $\kk[t, t^{-1}, y]$ 
%		such that $\sigma(A)$ contains $\kk[t, y]$.
%	\item $A$ contains $\kk[t, t^{-1}]$.
%	\end{enumerate}
%\end{maintheorem}
%
%The extending maximal $\kk$-subalgebras in case i)
%are then described by Theorem~\ref{MainthmC} and
%the extending maximal $\kk$-subalgebras in case ii) are then described
%by Proposition~\ref{??} and 
%Theorem~\ref{MainthmD}.

\section{Classification of the non-extending maximal subrings}
\label{non-extending.sec}

Let $R$ be any ring and denote by $X = \Spec(R)$ the
corresponding affine scheme. In the sequel, we describe 
three procedures to construct a non-extending maximal subring of $R$.
\medskip

\textbf{a) Glueing two closed points transversally.}
Choose two different closed points $x_1, x_2 \in X$ such that their
residue fields $\kappa(x_1)$, $\kappa(x_2)$ are isomorphic and choose
some isomorphism $\sigma \colon \kappa(x_1) \to \kappa(x_2)$. Let
\[
	R_{x_1, x_2} = 
	\{ \, f \in R \ | \ \sigma(f(x_1)) = f(x_2) \, \}
\]
(note that $R_{x_1, x_2}$ depends on $\sigma$).
Then $R_{x_1, x_2}$ is a non-extending 
maximal subring of $R$ with crucial maximal ideal
\[
	\mm_{x_1, x_2} = \{ \, f \in R \ | \ \sigma(f(x_1)) = f(x_2) = 0 \, \} \, ,
\]
the homomorphisms on residue fields
$R_{x_1, x_2} / \mm_{x_1, x_2} \to  \kappa(x_i)$ are isomorphisms
and the fiber of $X \to \Spec(R_{x_1, x_2})$ over $\mm_{x_1, x_2}$ 
contains only the points $x_1$ and $x_2$. Moreover, the natural linear map 
on tangent spaces
\[
	T_{x_1} X \oplus T_{x_2} X \longrightarrow 
	T_{\mm_{x_1, x_2}} \Spec(R_{x_1, x_2})
\]
is an isomorphism.

\begin{proof}
	For $i = 1, 2$, let $\mm_i \subseteq R$ be the maximal ideal corresponding 
	to $x_i$. The injective homomorphism
	\[
		R_{x_1, x_2} / \mm_{x_1, x_2} \longrightarrow R / \mm_i
	\]
	is surjective. Indeed, let $f \in R$. By symmetry, we can assume that
	$i = 1$. Then there exists $h \in \mm_1$
	such that $h(x_2) = \sigma(f(x_1))-f(x_2)$. Thus
	$f + h \in R_{x_1, x_2}$, which proves the surjectivity. Let
	$\kappa = R_{x_1, x_2} / \mm_{x_1, x_2}$.
	Since $\mm_{x_1, x_2} = \mm_1 \cap \mm_2$, the homomorphism
	\[
		R_{x_1, x_2} / \mm_{x_1, x_2} \subseteq R / \mm_{x_1, x_2} = 
		R / \mm_1 \cap \mm_2 = R / \mm_1 \times R / \mm_2
	\]
	identifies with the diagonal homomorphism $\kappa \to \kappa \times \kappa$.
	Since $\mm_{x_1, x_2} = \mm_{x_1, x_2} R$, it follows that $R_{x_1, x_2}$
	is a maximal subring of $R$, see Lemma~\ref{lem:maxandcond}.
	Moreover, the fiber of
	$X \to \Spec(R_{x_1, x_2})$ over $\mm_{x_1, x_2}$ consists of $x_1$
	and $x_2$. For the last statement we prove that
	the $\kappa$-linear map on cotangent spaces
	\begin{equation}
		\label{injectivity.eq}
		\mm_{x_1, x_2} / \mm_{x_1, x_2}^2
		\longrightarrow  \mm_1 / \mm_1^2 \oplus \mm_2 / \mm_2^2
	\end{equation}
	is an isomorphism.
	Let $f_1 \in \mm_1$ such that $f_1(x_2) = 1$.
	The ideals $\mm_1^2$ and $\mm_2^2$ are coprime, since
	$\mm_1$ and $\mm_2$ are coprime,
	and thus we get $\mm_1^2 \cap \mm_2^2 = \mm_1^2 \cdot \mm_2^2
	\subseteq \mm_{x_1, x_2}^2$. 
	This proves the injectivity of \eqref{injectivity.eq}. Let
	$h \in \mm_2$. Then
	\[
		f_1^2 h \in \mm_{x_1, x_2} \, , \quad f_1^2 h - h \in \mm_2^2
		\quad \textrm{and} \quad f_1^2 h \in \mm_1^2 \, ,
	\]
	which proves that $\{ 0 \} \oplus \mm_2 / \mm_2^2$ 
	lies in the image of \eqref{injectivity.eq}. By symmetry we
	get the surjectivity of \eqref{injectivity.eq}.
\end{proof}

\medskip
\textbf{b) Deleting a tangent direction at a closed point.} 
Choose a closed
point $x \in X$ and a derivation 
$\delta \colon \OOO_{X, x} \to \kappa(x)$ 
that induces a non-zero tangent vector
in $T_x X$. Let
\[
	R_{x, \delta} = \{ \, f \in R \ | \ \delta(f) = 0 \, \} \, .
\]
Then $R_{x, \delta}$ is a non-extending maximal subring of $R$ with crucial maximal ideal
\[
	\mm_{x, \delta} = \{ \, f \in R \ | \ \delta(f) = 0 \, , \, f(x) = 0 \, \} \, .
\]
The morphism $X \to \Spec(R_{x, \delta})$ is bijective, 
maps $x$ on $\mm_{x, \delta}$ and induces an isomorphism
on residue fields $R_{x, \delta} / \mm_{x, \delta} \to \kappa(x)$. Moreover there
is an induced exact sequence
\[
	0 \longrightarrow \kappa(x) v \longrightarrow T_x X \longrightarrow
	T_{\mm_{x, \delta}} \Spec(R_{x, \delta})
\]
where $vÊ\in T_x X$ denotes the non-zero tangent vector induced by $\delta$.

\begin{proof}
	Let $\mm \subseteq R$ be the maximal ideal corresponding to $x$ in $X$.
	Since $\delta$ induces a non-zero 
	$\kappa(x)$-linear map $\mm / \mm^2 \to \kappa(x)$,
	there exists $e \in \mm$ such that $\delta(e) = 1$.
	The injective homomorphism
	\[
		R_{x, \delta} / \mm_{x, \delta} \longrightarrow \kappa(x)
	\]
	is surjective. Indeed, if $f \in R$,
	then there exists $r \in R$ such that $\delta(f)-r(x)= 0$ inside $\kappa(x)$. 
	Hence,
	$f - er \in R_{x, \delta}$ and $er \in \mm$, which proves the surjectivity. 
	
	Let $\kappa = R_{x, \delta} / \mm_{x, \delta}$.
	Since $\mm_{x, \delta}$ is an ideal of $R$, we get a $\kappa$-algebra 
	isomorphism
	\[
		\kappa[\varepsilon] / (\varepsilon^2) 
		\longrightarrow R / \mm_{x, \delta} \, , \quad 
		\varepsilon \longmapsto e
	\]
	and the map $R_{x, \delta} / \mm_{x, \delta} \to
	R / \mm_{x, \delta}$ identifies with the $\kappa$-linear map
	$\kappa \to \kappa[\varepsilon] / (\varepsilon^2)$. 
	By Lemma~\ref{lem:maxandcond}, it follows 
	that $R_{x, \delta}$ is a maximal subring of $R$. Clearly, the induced
	map $X \to \Spec(R)$ is bijective and maps $x$ to $\mm_{x, \delta}$.
	The last statement follows from the exact sequence of $\kappa$-vector
	spaces
	\[
		\mm_{x, \delta} / \mm_{x, \delta}^2 \longrightarrow \mm / \mm^2 
		\stackrel{\delta}{\longrightarrow} 
		\kappa \longrightarrow 
		0 \, .
	\]
\end{proof}

\begin{remark} $ $
	\begin{enumerate}[i)]
	\item
	There are affine schemes $X$ such that for some closed point 
	$x \in X$ the tangent space $T_x X \neq 0$, even though there 
	exists no non-zero derivation
	$\OOO_{X, x} \to \kappa(x)$. Take for example $X = \Spec(\ZZ)$
	and $x = p \ZZ$ where $p$ is some prime number.
	
	\item There are affine schemes $X$ such that for some
	closed point $x \in X$ there exists a non-zero derivation
	$\delta \colon \OOO_{X, x} \to \kappa(x)$ that induces the zero 
	vector in $T_x X$. Take for example any affine scheme $X$
	and take any closed point $x \in X$ such that there exists a
	non-zero derivation $\delta_0 \colon \kappa(x) \to \kappa(x)$.
	Then  
	\[
		\OOO_{X, x} \longrightarrow \kappa(x) 
		\stackrel{\delta_0}{\longrightarrow} \kappa(x)
	\]
	is a non-zero derivation that induces the zero vector in $T_x X$.
	\end{enumerate}
\end{remark}

%\begin{remark}
%	If $x \in X$ is a closed point and if we have an embedding 
%	of the residue field $\kappa(x)$ in the local ring $\OOO_{X, x}$ 
%	such that the composition
%	$\kappa(x) \to \OOO_{X, x} \to \kappa(x)$ is an isomorphism, then 
%	\emph{every} tangent direction $v$ in $T_x X$ contains a non-zero
%	derivation $\delta \colon \OOO_{X, x} \to \kappa(x)$. This is satisfied, if for 
%	example $R$ is a finitely generated algebra over an algebraically closed
%	field.
%\end{remark}

\medskip
\textbf{c) Shrinking the residue field at a closed point.}
Choose a closed point $x \in X$ and choose a maximal subfield
$k$ of the residue field $\kappa(x)$, i.e. a subfield $k \subseteq \kappa(x)$
such that there exists no proper intermediate field between $k$ and 
$\kappa(x)$. Let
\[
	R_{x, k} = \{ \, f \in R \ | \ f(x) \in k \, \} \, .
\]
Then $R_{x, k}$ is a non-extending maximal subring of $R$ with crucial
maximal ideal
\[
	\mm_{x, k} = \{ \, f \in R \ | \ f(x) = 0  \, \}
\]
and the residue field $R_{x, k} / \mm_{x, k}$ is $k$. Moreover, the morphism
$X \to \Spec(R_{x, k})$ is bijective and maps $x$ on $\mm_{x, k}$.

\begin{proof}
	Note, that a maximal subfield of a field is automatically a maximal subring
	of that field. Thus, the maximality of $R_{x, v}$ in $R$ follows from the 
	fact that
	\[
		k = R_{x, v} / \mm_{x, v} \longrightarrow R / \mm_{x, v} = \kappa(x)
	\]
	is a maximal subfield, see Lemma~\ref{lem:maxandcond}.
	The other statements are clear.
\end{proof}

The next result shows, that every non-extending maximal subring arises
by one of the three constructions above. In fact, a version of this
result can be found in \cite[Corollary II.2]{DoMuPi2005On-the-FIP-propert}.
However, for the sake of completeness and the shortness of the argument, 
we provide a proof.
The main ingredient will be an easy, but very important Lemma of
Ferrand and Olivier~\cite{FeOl1970Homomorphisms-mini}.

\begin{proposition}
	\label{prop:Non-extending}
	If $A \subseteq R$ is a non-extending maximal subring, then 
	it is one of the maximal subrings constructed in a), b) or c).
\end{proposition}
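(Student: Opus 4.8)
The plan is to start from the key Lemma of Ferrand and Olivier: if $A \subseteq R$ is a minimal ring extension with crucial maximal ideal $\mm$, then the conductor $\mathfrak{c} = (A :_A R)$ is a maximal ideal of $A$ equal to $\mm$, and $R/\mm$ is either a minimal field extension of the field $A/\mm$, or $R/\mm$ is isomorphic to $A/\mm \times A/\mm$, or $R/\mm \cong (A/\mm)[\varepsilon]/(\varepsilon^2)$ as an $(A/\mm)$-algebra. (This is exactly what is extracted in the three proofs above via \texttt{Lemma~\ref{lem:maxandcond}}.) These three possibilities correspond precisely to constructions c), a) and b), respectively, so the proof is a matter of showing that in each case $A$ literally coincides with the ring built in the corresponding construction.

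First I would set $\kappa = A/\mm$ and analyze the $\kappa$-algebra $R/\mm$, which is a $\kappa$-algebra generated by at most one element over $\kappa$ with $\dim_\kappa(R/\mm) \le 2$; these are classified as $\kappa$ (the minimal-field-extension case, where $R/\mm$ is a field), $\kappa \times \kappa$, $\kappa \times \ell$ with $\ell$ a quadratic field extension (excluded since then $R/\mm$ has a proper subring strictly containing the diagonal only if $\ell = \kappa$), $\ell$ a quadratic field extension, or $\kappa[\varepsilon]/(\varepsilon^2)$. In the field case $R/\mm = \ell$, the maximal ideal $\mm$ of $A$ pulls back from a single maximal ideal $\mathfrak{n}$ of $R$ (since $R$ is a finite $A$-module, $X \to \Spec A$ is surjective, and $\mm R = \mm$ is the unique maximal ideal of $R$ over $\mm$); set $x$ to be the corresponding closed point of $X$. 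Then $A \subseteq R_{x,k}$ where $k = \kappa \subseteq \kappa(x) = \ell$ is a maximal subfield, and since $R_{x,k} \subsetneq R$ (as $k \ne \kappa(x)$) and $A$ is maximal, $A = R_{x,k}$ — this is construction c).

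Next, in the case $R/\mm \cong \kappa \times \kappa$: the two idempotents give two maximal ideals $\mm_1, \mm_2$ of $R$ lying over $\mm$, with $\mm_1 \cap \mm_2 = \mm$, so the closed points $x_1, x_2$ of $X$ are distinct, $\kappa(x_1) \cong \kappa \cong \kappa(x_2)$, and the composite $\kappa \hookrightarrow R/\mm \cong R/\mm_1 \times R/\mm_2$ picks out an isomorphism $\sigma \colon \kappa(x_1) \to \kappa(x_2)$. One checks $A \subseteq R_{x_1,x_2}$ by construction and $R_{x_1,x_2} \ne R$, so maximality of $A$ forces $A = R_{x_1,x_2}$ — construction a). In the remaining case $R/\mm \cong \kappa[\varepsilon]/(\varepsilon^2)$ there is a unique maximal ideal $\mm_1$ of $R$ over $\mm$, giving a closed point $x$ with $\kappa(x) \cong \kappa$; the surjection $R \twoheadrightarrow R/\mm \cong \kappa[\varepsilon]/(\varepsilon^2) \to \kappa$ via $\varepsilon \mapsto 0$ composed with the map to $\varepsilon$-coefficients gives a derivation $\delta \colon \OOO_{X,x} \to \kappa(x)$ (or rather $R \to \kappa(x)$), which is nonzero on $\mm/\mm^2$ precisely because $\varepsilon \notin \kappa$ inside $R/\mm$; then $A \subseteq R_{x,\delta} \subsetneq R$ and maximality gives $A = R_{x,\delta}$ — construction b).

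The main obstacle I expect is handling the derivation case cleanly: one must produce a genuine derivation $\delta$ landing in $\kappa(x)$ out of the purely algebraic datum $R/\mm \cong (A/\mm)[\varepsilon]/(\varepsilon^2)$, checking that it descends to a well-defined map $R \to \kappa(x)$ inducing a nonzero tangent vector at $x$, and — conversely — that an arbitrary $f \in R$ with $\delta(f) = 0$ actually lies in $A$, not merely in some larger subring; this last point is where the hypothesis that $\mm$ is the conductor (so $\mm R \subseteq A$) is used, together with the fact that $R = A + \kappa e$ modulo $\mm$ for a suitable lift $e$ of $\varepsilon$. A secondary subtlety is ruling out the mixed cases ($\kappa \times \ell$ with $\ell \ne \kappa$, or $R/\mm$ a nontrivial product-of-field-and-dual-numbers), which do not arise because $\dim_\kappa R/\mm \le 2$ and $R/\mm$ must be generated by one element as a $\kappa$-algebra together with the constraint that $\kappa \hookrightarrow R/\mm$ is a minimal extension of rings. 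Once the case division is set up, each verification that $A$ equals the constructed ring is short, relying on maximality of $A$ and the fact that each constructed ring is a proper subring of $R$.
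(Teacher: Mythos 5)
Your proposal follows essentially the same route as the paper: reduce modulo the crucial maximal ideal $\mm$ (which satisfies $\mm = \mm R$, so Lemma~\ref{lem:maxandcond} applies), invoke the Ferrand--Olivier trichotomy for the minimal extension $A/\mm \subseteq R/\mm$ (field extension, $\kappa \times \kappa$, or $\kappa[\varepsilon]/(\varepsilon^2)$), and identify the three cases with constructions c), a) and b). One small caveat: your side-remark that $\dim_\kappa(R/\mm)\le 2$ is false in the field case (a minimal field extension can have any prime degree, or be infinite), but this does not affect the argument, since the trichotomy is correctly taken from Ferrand--Olivier and the field case is handled by construction c) regardless of degree.
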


\begin{proof}
	By assumption, the map 
	$\Spec(R) \to \Spec(A)$ is surjective and there exists a unique maximal ideal
	$\mm \subseteq A$ such that $\mm = \mm R$. By 
	Lemma~\ref{lem:maxandcond}, the field $K = A/ \mm$ is a maximal subring of
	$R / \mm$. By \cite[Lemme~1.2]{FeOl1970Homomorphisms-mini} 
	one of the following possibilities occur:
	\begin{enumerate}[i)]
		\item The map $K \subseteq R / \mm$ identifies with the 
			 diagonal map $K \to K \times K$.
		\item The map $K \subseteq R / \mm$ identifies with the 
		 	$K$-homomorphism $K \to 
			K[\varepsilon] / (\varepsilon^2)$.
		\item $R / \mm$ is a field.
	\end{enumerate}
	If we are in case i), then $A = R_{x_1, x_2}$ where
	$x_1, x_2 \in \Spec(R)$ correspond to the two maximal ideals
	$\{ 0 \} \times K$, $K \times \{ 0 \}$ of
	$R / \mm \simeq K \times K$ and $\sigma$ is given by
	\[
		\kappa(x_1) \stackrel{\simeq}{\longleftarrow} A / \mm
		\stackrel{\simeq}{\longrightarrow} \kappa(x_2) \, .
	\] 
	If we are in case ii), then
	$A = R_{x, \delta}$ where $x \in \Spec(R)$ corresponds to the 
	maximal ideal $(\varepsilon)$ of 
	$R / \mm \simeq K[\varepsilon] / (\varepsilon^2)$
	and $\delta$ is given by
	\[
		\delta \colon R \longrightarrow R / \mm \simeq 
		K[\varepsilon] / (\varepsilon^2) 
		\stackrel{\delta'}{\longrightarrow} K
	\]
	where $\delta'$ is the $K$-derivation that maps $\varepsilon$ to $1$.
	If we are in case iii), then $A = R_{x, k}$ where $x$ corresponds to
	the maximal ideal $\mm$ of $R$ and $k$ is the subfield 
	$K$ of $R / \mm$. This finishes the proof.
\end{proof}

To the authors' knowledge there is no complete description of the 
maximal subfields of a given field. Partial results in this direction
can be found in \cite[Proposition~2.2]{PiPi2006About-minimal-morp}.
However, if we restrict ourselves to the case, where $R$ is a finitely
generated algebra over an algebraically closed field and if we consider only 
subalgebras, then we can exclude the construction c). More precisely, we get
the following result.

\begin{corollary}
	Let $\kk$ be an algebraically closed field and let $R$ be a finitely
	generated $\kk$-algebra. Denote $X = \Spec(R)$. Then
	\begin{enumerate}[i)]
		\item
		$\{ \, f \in R \ | \ f(x_1) = f(x_2) \ \textrm{inside $\kk$} \,Ê\}$ \quad (where
		$x_1 \neq x_2 \in X$ are closed points)
		\item
		$\{ \, f \in R \ | \ D_v(f) = 0 \, \}$ \quad (where $0 \neq v \in T_x X$
		and $x \in X$ is a closed point)
	\end{enumerate}
	are non-extending maximal $\kk$-subalgebras of $R$.
	Moreover, every non-extending maximal $\kk$-subalgebra of $R$ is
	one of the above. 
\end{corollary}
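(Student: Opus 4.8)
The plan is to combine the general result of Proposition~\ref{prop:Non-extending} with the fact that over an algebraically closed field $\kk$, every residue field at a closed point is $\kk$ itself, and that $\kk$ has no proper subfield over which it is a (maximal) subfield extension. First I would check that the two displayed rings are $\kk$-subalgebras and are non-extending maximal subrings of $R$; this is an immediate application of constructions a) and b) from the start of the section, so there is essentially nothing to do except to observe that the maps on residue fields land in $\kk$, so that the constructed subrings automatically contain $\kk$. Concretely, in case i) the map $\sigma \colon \kappa(x_1) \to \kappa(x_2)$ is forced to be $\id_\kk$, so $R_{x_1,x_2} = \{ f \in R \mid f(x_1) = f(x_2) \}$; in case ii) a derivation $\delta \colon \OOO_{X,x} \to \kappa(x) = \kk$ inducing a nonzero tangent vector is the same datum as a nonzero $v \in T_x X$ together with the directional derivative $D_v$, so $R_{x,\delta} = \{ f \in R \mid D_v(f) = 0 \}$.

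For the converse, let $A \subseteq R$ be a non-extending maximal $\kk$-subalgebra. By Proposition~\ref{prop:Non-extending}, $A$ is one of $R_{x_1,x_2}$, $R_{x,\delta}$, or $R_{x,k}$. In each case the crucial maximal ideal $\mm$ has residue field $K = A/\mm$, which is a maximal subring (indeed, in case iii), a maximal subfield) of the residue field(s) at the point(s) above $\mm$. Since $R$ is a finitely generated $\kk$-algebra and $\kk$ is algebraically closed, the Nullstellensatz gives $\kappa(x) = \kk$ for every closed point $x \in X$. Now $A$ contains $\kk$, hence so does $K = A/\mm$; but $\kk$ is a maximal subring of $\kk$ only in the degenerate sense — there is no proper intermediate subfield between $\kk$ and $\kk$ because they coincide — so construction c) cannot occur with a proper $k$: the only maximal subfield of $\kk$ inside $\kappa(x) = \kk$ would be $\kk$ itself, which does not give a proper extension. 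Therefore case iii) is impossible, and $A$ must be of type a) or b), i.e. one of the two forms in the statement.

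The only points that need a little care — and I expect this to be the main (very minor) obstacle — are bookkeeping ones: first, that the $\sigma$ in a) really is forced to be the identity (so that the residue-field identification in Proposition~\ref{prop:Non-extending}, case i), is canonical and the resulting subalgebra is exactly $\{ f \mid f(x_1) = f(x_2)\}$ and contains $\kk$); and second, that a $\kk$-subalgebra $A$ of $R$ automatically has $\kk \hookrightarrow A/\mm$, so that the residue field $K$ of the crucial maximal ideal contains $\kk$ and hence equals $\kk$ (as it is squeezed between $\kk$ and $\kk$). Both are routine. With these observations in place, the corollary follows directly from Proposition~\ref{prop:Non-extending} by discarding construction c).
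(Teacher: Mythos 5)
Your proposal is correct and follows essentially the same route as the paper: verify via constructions a) and b) that the two displayed rings are non-extending maximal $\kk$-subalgebras, then invoke Proposition~\ref{prop:Non-extending} together with the Nullstellensatz to rule out construction c) and to force $\sigma = \id$ in case a) and $\delta = D_v$ in case b). The only (harmless) imprecision is your claim that a derivation inducing a nonzero tangent vector \emph{is} the same datum as $D_v$; in general such a $\delta$ may differ from $D_v$ by a derivation factoring through $\kappa(x)$, and one needs $\kk \subseteq A$ to conclude that $\delta$ is $\kk$-linear and hence equal to $D_v$ --- exactly the point the paper spells out.
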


\begin{proof}
	For closed points $x_1 \neq x_2 \in X$, it follows by the construction in a)
	that the $\kk$-subalgebra in i) is equal to $R_{x_1, x_2}$
	(we define $\sigma \colon \kk \to \kk$ as the identity). For a closed point
	$x \in X$ and for a non-zero tangent vector $0 \neq v \in T_x X$, it 
	follows by the construction in b) that the $\kk$-subalgebra in ii) is equal to 
	$R_{x, \delta}$ where we define $\delta$ as the $\kk$-derivation
	\[
		D_v \colon \OOO_{X, x} = \mm_x \oplus \kk \cdot 1 \longrightarrow 
		\mm_x \longrightarrow \mm_x / \mm_x^2 
		\stackrel{v}{\longrightarrow} \kk
	\]
	and where $\mm_x \subseteq \OOO_{X, x}$ 
	denotes the unique maximal ideal.
	
	Conversely, let $A \subseteq R$ be a non-extending 
	maximal $\kk$-subalgebra. By Proposition~\ref{prop:Non-extending}, 
	$A$ is one of the maximal
	subrings of $R$ constructed in a), b) or c).
	Since $R$ is a finitely generated $\kk$-algebra
	and since $\kk$ is algebraically closed, it follows that the residue field
	of every closed point of $X$ is $\kk$. Since $R$ is a
	finitely generated $A$-module, it follows that $X \to \Spec(A)$ is surjective,
	and therefore the residue field of every closed point of $\Spec(A)$ is $\kk$.
	Thus $A$ cannot be one of the maximal subrings constructed in c).
	We distinguish two cases.
	\begin{itemize}
		\item $A$ is the maximal subring constructed in a). Then there
		exist two different closed points $x_1 \neq x_2$ in $X$
		and an isomorphism 
		$\sigma \colon \kappa(x_1) \to \kappa(x_2)$ such that
		\[
			A = \{ \, f \in R \ | \ \sigma(f(x_1)) = f(x_2) \,Ê\} \, .
		\] 
		Since $\kk \subseteq A$, $\sigma$ commutes with
		the canonical isomorphisms $\kk \simeq \kappa(x_i)$.
		Thus by identifying $\kappa(x_i)$ with $\kk$, the isomorphism
		$\sigma$ is the identity.
		
		\item $A$ is the maximal subring constructed in b). Then there exist
		a closed point $x \in X$ and a derivation 
		$\delta \colon \OOO_{X, x} \to \kk$ (that induces a
		non-zero tangent vector at $x$) such that
		\[
			A = \{ \, f \in R \ | \ \delta(f) = 0 \,Ê\} \, .
		\]
		Since $\kk \subseteq A$, it follows that $\delta$ is a $\kk$-derivation.
		There exists a $\kk$-linear isomorphism from $T_x X$ 
		to the vector space of $\kk$-derivations $\OOO_{X, x} \to \kk$, given
		by $v \mapsto D_v$. Hence, 
		there exists $0 \neq v \in T_x X$ such that $\delta = D_v$.
	\end{itemize}
	This finishes the proof.
\end{proof}

%Azarang studies in \cite{Az2015On-the-existence-o} 
%the existence of maximal subrings of Noetherian rings.
%In case one is only interested in non-extending maximal subrings we get the
%following result directly from the classification.
%
%\begin{corollary}
%	A Noetherian ring $R$ has no non-extending maximal subring
%	if and only if $R$ is the product of finitely many fields
%	$K_1, \ldots, K_r$ that are pairwise non-isomorphic,
%	and non of the fields $K_i$ contains a maximal subfield.
%\end{corollary}

\section{Some general considerations about maximal subrings}
\label{GeneralConsiderations.sec}

\subsection{Reduction to integral domains}

The aim of this subsection is to show that one can reduce the classification
of the extending maximal subrings of a ring $R$ to the case, 
where $R$ is an integral domain.

Let $\pp$ be a minimal prime ideal of $R$ and denote by 
$\pi \colon R \to R / \pp$ the canonical projection. 
If $A$ is an extending maximal subring of $R$, such that the crucial maximal ideal
contains $A \cap \pp$, then one can easily see, that $\pi(A)$ is an extending
maximal subring of $R / \pp$. On the other hand, if $B$ is an extending maximal
subring of $R / \pp$, then one can easily see $\pi^{-1}(B)$ is an extending
maximal subring of $R$ and its crucial maximal ideal contains $\pi^{-1}(B) \cap \pp$.
Thus we established a bijective correspondence:
\[
	\left\{
	\begin{array}{c}
	    \textrm{extending maximal subrings $A \subsetneq R$} \\
	    \textrm{such that the crucial maximal} \\
	    \textrm{ideal contains $A \cap \pp$}
	\end{array}
	\right\}
	\stackrel{1:1}{\longleftrightarrow}
	\left\{
	\begin{array}{c}
	    \textrm{extending maximal} \\ 
	    \textrm{subrings of $R / \pp$}
	\end{array}
	\right\} \, .
\]
Note that for every extending maximal subring $A$ of $R$ there exists a minimal
prime ideal $\pp$ of $R$ such that the crucial maximal ideal contains $A \cap \pp$.
Thus we are reduced to the case, where $R$ is an integral domain.

\subsection{Some properties of maximal subrings}
\label{Generalfacts.subsec}
In this subsection we gather some general properties of 
maximal subrings, that we will
constantly use in the course of this article.

The first lemma says that maximal subrings behave well under localization.

\begin{lemma}[see {\cite[Lemme~1.3]{FeOl1970Homomorphisms-mini}}]
	\label{lem:MaxSubringAndLocalization}
	Let $A \subseteq R$ be a maximal subring and let $S$ be 
	a multiplicatively closed subset of $A$. 
	Then the localization $A_S$ is either a maximal subring
	of  the localization $R_S$ or $A_S = R_S$.
\end{lemma}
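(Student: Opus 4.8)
The plan is to exploit the fact that a maximal subring $A \subseteq R$ is characterised by a ``gap'' property: there is no ring strictly between $A$ and $R$. First I would recall the elementary but crucial observation that localization is functorial and exact on the lattice of intermediate rings. Concretely, if $A \subseteq B \subseteq R$ is any chain, then applying $(-)_S$ gives $A_S \subseteq B_S \subseteq R_S$ inside the total ring of fractions (or simply inside $R_S$, which makes sense since $S \subseteq A \subseteq R$), and moreover the assignment $B \mapsto B_S$ is order-preserving. The key point I want is the reverse direction: given a ring $C$ with $A_S \subseteq C \subseteq R_S$, its contraction $B := C \cap R$ (intersection taken inside $R_S$, identifying $R$ with its image) satisfies $A \subseteq B \subseteq R$ and, crucially, $B_S = C$. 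This last equality is the standard fact that contracting and then re-localizing recovers an $S$-saturated ring; it holds because every element of $C$ can be written as $r/s$ with $r \in R$, $s \in S$, whence $r = s \cdot (r/s) \in C \cap R = B$, so $r/s \in B_S$, giving $C \subseteq B_S$, while $B_S \subseteq C$ is immediate from $B \subseteq C$ and $S$ being invertible in $C$.

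With that machinery in place, the argument is short. Suppose $A_S \neq R_S$; I must show $A_S$ is a maximal subring of $R_S$. So take any ring $C$ with $A_S \subseteq C \subseteq R_S$ and set $B = C \cap R$. Then $A \subseteq B \subseteq R$. Since $A$ is a maximal subring of $R$, either $B = A$ or $B = R$. In the first case $C = B_S = A_S$; in the second case $C = B_S = R_S$. Hence there is no proper intermediate ring between $A_S$ and $R_S$, so $A_S$ is a maximal subring of $R_S$. Combined with the hypothesis $A_S \neq R_S$, this is exactly the dichotomy in the statement.

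The one point deserving a little care — the part I'd flag as the main obstacle, though it is more bookkeeping than difficulty — is justifying $A \subseteq B$, i.e. that $A = A_S \cap R$ inside $R_S$. This requires knowing that $A \to A_S$ and $R \to R_S$ are both localizations at the same multiplicative set $S \subseteq A$, so that the square of localization maps commutes and $A_S \cap R$ makes sense as a subring of $R_S$; the inclusion $A \subseteq A_S \cap R$ is then trivial, and we do not even need equality there — only $A \subseteq B \subseteq R$ together with $B_S = C$. A secondary subtlety is that $R$ need not inject into $R_S$ in general (elements of $S$ may be zerodivisors), but this causes no trouble: one works throughout with the images, and the identity $C = B_S$ with $B = \pi^{-1}(C)$ for $\pi \colon R \to R_S$ holds verbatim in that generality. (Indeed this is precisely why Ferrand and Olivier state the lemma as cited.) So the proof reduces to the two displayed facts about localization, and the maximality of $A_S$ follows formally.
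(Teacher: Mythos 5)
Your proof is correct: the contraction $B=\pi^{-1}(C)$ for $\pi\colon R\to R_S$ satisfies $A\subseteq B\subseteq R$ and $B_S=C$ (using that the images of $S$ are units in $A_S\subseteq C$), so maximality of $A$ forces $C=A_S$ or $C=R_S$, which is exactly the claimed dichotomy. The paper gives no proof of its own, citing \cite[Lemme~1.3]{FeOl1970Homomorphisms-mini}, and your argument is the standard one underlying that reference, including the correct handling of the case where $R\to R_S$ is not injective.
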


The second lemma gives us the possibility for certain cases 
to reduce to quotient rings, while searching
for maximal subrings. It is a direct consequence of 
\cite[Lemme~1.4]{FeOl1970Homomorphisms-mini}.

\begin{lemma}
	\label{lem:maxandcond}
	Let $A \subseteq R$ be a ring extension and let $I \subseteq A$ be an
	ideal such that $I = I R$. Then 
	$A$ is a maximal subring of $R$
	if and only if $A / I$ is a maximal subring of $R / I$.
\end{lemma}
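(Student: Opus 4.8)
The plan is to exploit the standard correspondence between intermediate rings lying above a common ideal. First I would observe that the hypothesis $I = IR$ says precisely that $I$ is an ideal of $R$, not merely of $A$; since moreover $I \subseteq A$, the projection $\pi \colon R \to R/I$ restricts to the projection $A \to A/I$ with kernel $I$, so that $A/I \hookrightarrow R/I$ is a genuine ring extension. The same remark shows that any ring $B$ with $A \subseteq B \subseteq R$ automatically contains $I$ as an ideal, so that $B/I$ is a subring (not just an additive subgroup) of $R/I$ lying between $A/I$ and $R/I$. This is the one spot where $I = IR$ is genuinely used.

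Next I would set up the maps
\[
	\{\, B \ : \ A \subseteq B \subseteq R \,\} \longleftrightarrow \{\, \bar B \ : \ A/I \subseteq \bar B \subseteq R/I \,\} \, , \qquad B \longmapsto \pi(B) = B/I \, , \quad \bar B \longmapsto \pi^{-1}(\bar B) \, ,
\]
and check that they are mutually inverse: $\pi^{-1}(\pi(B)) = B + I = B$ since $I \subseteq A \subseteq B$, and $\pi(\pi^{-1}(\bar B)) = \bar B$ since $\pi$ is surjective. Both maps are visibly inclusion-preserving, so the poset of intermediate rings of the extension $A \subseteq R$ is order-isomorphic to that of $A/I \subseteq R/I$.

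Finally I would translate the definition of \emph{maximal subring}. One has $A = R$ if and only if $A/I = R/I$: the forward direction is clear, and the converse follows because $A/I = R/I$ forces $A + I = R$, hence $A = R$ as $I \subseteq A$. And under the order-isomorphism above, the nonexistence of a ring strictly between $A$ and $R$ is equivalent to the nonexistence of a ring strictly between $A/I$ and $R/I$. Combining these two equivalences shows that $A \subsetneq R$ is a minimal ring extension if and only if $A/I \subsetneq R/I$ is, which is the assertion. No real obstacle arises beyond this bookkeeping; the main point to keep an eye on is the well-definedness of $B \mapsto B/I$ noted above.
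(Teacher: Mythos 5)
Your proof is correct: the observation that $I=IR$ makes $I$ a common ideal of $A$ and $R$ (and of every intermediate ring $B$), together with the inclusion-preserving bijection $B\mapsto B/I$, $\bar B\mapsto\pi^{-1}(\bar B)$ between the two posets of intermediate rings, and the check that $A=R$ iff $A/I=R/I$, is exactly what is needed. The paper itself gives no proof of this lemma --- it simply cites Lemme~1.4 of Ferrand--Olivier --- and your lattice-correspondence argument is the standard, self-contained justification of that citation, with the one genuinely delicate point (well-definedness of $B\mapsto B/I$, i.e.\ that $I$ is an ideal of $B$) correctly identified and handled.
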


In particular, for every ring extension $A \subseteq R$, 
the conductor ideal
\[
	I =  \{ \, a \in A \ | \ aR \subseteq A \, \}
\]
satisfied $I = I R$. 
Note that every ideal $J$ of $A$ with $J = J R$ is contained in 
the conductor ideal $I$.

\begin{lemma}[see {\cite[Lemme~3.2]{FeOl1970Homomorphisms-mini}}]
	Let $A \subsetneq R$ be an extending maximal subring. Then the
	conductor ideal of $A$ in $R$ is a prime ideal of $R$.
\end{lemma}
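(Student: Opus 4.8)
The goal is to show that if $A \subsetneq R$ is an extending maximal subring, then the conductor ideal
\[
	I = \{ \, a \in A \ | \ aR \subseteq A \, \}
\]
is a prime ideal of $R$. Recall from the theorem of Ferrand--Olivier quoted above that in the extending case, the induced morphism $\varphi \colon \Spec(R) \to \Spec(A)$ restricts to an isomorphism $\Spec(R) \setminus \varphi^{-1}(\mm) \to \Spec(A) \setminus \{\mm\}$, where $\mm$ is the crucial maximal ideal, and that $\varphi$ is a flat epimorphism; in particular $\mm \neq \mm R$.

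The plan is as follows. First I would observe that $I$ is an ideal of both $A$ and $R$ (this is immediate from the definition), and that $I \subseteq \mm$: indeed $I = IR$, and by Lemma~\ref{lem:maxandcond} applied with $J = I$, if $I$ were not contained in the crucial maximal ideal then $A/I$ could not be a maximal subring of $R/I$ with the required crucial ideal structure; more directly, $\mm$ is the unique maximal ideal of $A$ with $\mm \neq \mm R$ in the extending case, so any proper ideal $J$ of $A$ with $J = JR$ must be contained in $\mm$ (since $J R$ is a proper ideal of $R$ lying over a proper ideal of $A$, and localizing away from $\mm$ forces $A_S = R_S$ by Lemma~\ref{lem:MaxSubringAndLocalization}, hence $J_S = J_S R_S = R_S$, so $J \subseteq \mm$). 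Next, since $A$ is extending, $\varphi$ is a flat epimorphism, so $A_\mm \to R_\mm$ is a flat epimorphism of local rings; a flat epimorphism that is local and not an isomorphism has special structure, and in fact one shows $\mm R_\mm = R_\mm$, i.e. $\mm R = R$. Then the conductor $I$, being the largest ideal of $A$ that is also an ideal of $R$, satisfies: $R/I$ is generated as an $A/I$-module, and $A/I \to R/I$ is again a (minimal, by Lemma~\ref{lem:maxandcond}) ring extension.

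The heart of the argument is then to identify $V(I) \subseteq \Spec(R)$ precisely. I claim $V(I)$, the set of primes of $R$ containing $I$, is exactly the fiber $\varphi^{-1}(\mm)$, and moreover this fiber consists of a single point. Here is why: over $\Spec(A) \setminus \{\mm\}$ the map $\varphi$ is an isomorphism, so for any prime $\qq \subsetneq A$ other than $\mm$, the ring map $A_\qq \to R_\qq$ is an isomorphism, which forces $I \not\subseteq \qq$ (since the conductor localizes, $I_\qq$ is the conductor of $A_\qq$ in $R_\qq$, which is the unit ideal); hence every prime of $R$ lying over such a $\qq$ fails to contain $I$. Conversely any prime of $R$ containing $I$ must lie over a prime of $A$ containing $I$, hence over $\mm$. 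So $V(I) = \varphi^{-1}(\mm)$. Now since $\mm R = R$... that would make the fiber empty, which is absurd — so I must instead use that in the extending \emph{non-surjective} case the fiber $\varphi^{-1}(\mm)$ is nonempty but $\mm \subsetneq \mm R$ need not equal $R$; the correct statement from the quoted theorem is that surjectivity of $\varphi$ is equivalent to $\mm = \mm R$, so in the extending case $\mm \neq \mm R$ and $\varphi^{-1}(\mm)$ could be empty or not. The clean route: pass to $R/I$ and $A/I$; this is a minimal extension by Lemma~\ref{lem:maxandcond}, and $I$ is the conductor, so $A/I \to R/I$ has trivial conductor, meaning $R/I$ is \emph{not} a finite $A/I$-module unless $R/I = A/I$. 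Since $A/I \subsetneq R/I$ is minimal and $R/I$ is generated as $A/I$-module by... one checks $R/I$ is in fact a domain.

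Concretely, to show $I$ is prime it suffices to show $R/I$ is an integral domain. Suppose $\bar x \bar y = 0$ in $R/I$ with $\bar x, \bar y \neq 0$. Consider the ideal $J = \{ r \in R : r\bar x = 0 \text{ in } R/I\}$, an ideal of $R$ containing $y + I$-preimages. The key leverage is that $A/I \subseteq R/I$ is a minimal ring extension with zero conductor, so by the Ferrand--Olivier dichotomy (Lemme~1.2, used in the proof of Proposition~\ref{prop:Non-extending}) applied after reducing modulo the crucial maximal ideal $\mm/I$ of $A/I$, we are in the flat-epimorphism case (extending), hence $(A/I)_{\mm/I} \to (R/I)_{\mm/I}$ is a flat local epimorphism; flat epimorphisms are injective on minimal primes and in fact $\Spec(R/I) \to \Spec(A/I)$ is injective. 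Combined with zero conductor this pins down $R/I$ to have a unique minimal prime, and since the conductor is zero there is no embedded or multiple structure — so $R/I$ is reduced and irreducible, hence a domain. Therefore $I$ is prime.

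The step I expect to be the main obstacle is making the passage ``$A/I \subseteq R/I$ is minimal with zero conductor, therefore $R/I$ is a domain'' fully rigorous: one must rule out that $R/I$ has nilpotents or several minimal primes. The cleanest tool is the flat-epimorphism property of extending minimal extensions: a flat epimorphism $\varphi$ induces an injection on spectra and identifies $R/I$ locally with a localization of $A/I$ away from the crucial ideal, and such localizations of a reduced ring (and $A/I$ can be arranged reduced by first quotienting by its nilradical, using the reduction to integral domains from Section~\ref{GeneralConsiderations.sec}) are reduced; irreducibility then follows because the generic fiber is a single point. I would structure the final proof to first reduce to $R$ an integral domain via the reduction subsection, in which case $I$ prime is equivalent to $R/I$ being a domain, and then deploy the localization lemma together with the flat-epimorphism structure to conclude.
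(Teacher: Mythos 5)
Your central structural claim is false, and the proof does not recover from it. You assert that the conductor localizes, so that $A_{\mathfrak{q}} \cong R_{\mathfrak{q}}$ for a prime $\mathfrak{q} \neq \mm$ of $A$ forces $I \not\subseteq \mathfrak{q}$, and hence that $V(I) = \varphi^{-1}(\mm)$. In general only the inclusion of $I_{\mathfrak{q}}$ into the conductor of $A_{\mathfrak{q}}$ in $R_{\mathfrak{q}}$ holds; equality needs $R$ to be a finite $A$-module, which is exactly what fails in the extending case. Example~\ref{exa:motivatingexample} refutes the claim outright: for $A = \kk[t] + y\kk[t,t^{-1},y] \subseteq R = \kk[t,t^{-1},y]$ the conductor is $I = yR$, which is contained in every prime of $A$ corresponding to a point $(t_0,0)$ with $t_0 \neq 0$, even though $A$ and $R$ have the same localization there. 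Moreover, in the extending case $\varphi^{-1}(\mm)$ is \emph{empty}: non-surjectivity means precisely that $\mm$ is the missing point, equivalently $\mm R = R$ (your first computation was correct and the subsequent backtracking wrong), whereas $V(I) = V(y)$ is a whole curve. So $V(I) = \varphi^{-1}(\mm)$ cannot be the route to primality. Finally, the concluding step --- that $A/I \subseteq R/I$ minimal with zero conductor forces $R/I$ to be reduced with a unique minimal prime --- is asserted rather than proved, as you acknowledge; and the proposed reduction to $R$ a domain via Section~\ref{GeneralConsiderations.sec} does not obviously preserve the conductor.

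The paper gives no proof of this lemma (it cites Lemme~3.2 of Ferrand--Olivier), but the statement has a two-line argument from the $P_2$ property (Lemma~\ref{lem:PropertyP_2ForExtMax}), which is the tool you should reach for here. Let $x, y \in R$ with $xy \in I$ and $x \notin I$. Then there is $r \in R$ with $xr \notin A$. For any $s \in R$ we have
\[
	(xr)(ys) = (xy)(rs) \in IR = I \subseteq A \, ,
\]
so property $P_2$ gives $xr \in A$ or $ys \in A$; the first is excluded, hence $ys \in A$ for every $s \in R$, i.e. $y \in I$. Since $I \subseteq A \subsetneq R$, the ideal $I$ is proper in $R$, and therefore prime.
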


Samuel introduced in \cite{Sa1957La-notion-de-place} the $P_2$-property for
ring extensions. This property will be 
crucial for our classification result.

\begin{definition}
	Let $A \subseteq R$ be a subring. We say that $A$ 
	satisfies the \emph{property $P_2$ in $R$}, if
	for all $r, q \in R$ with $r q \in A$ we have either 
	$r \in A$ or $q \in A$.
\end{definition}

\begin{lemma}[see {\cite[Proposition~3.1]{FeOl1970Homomorphisms-mini}}]
	\label{lem:PropertyP_2ForExtMax}
	Let $A \subsetneq R$ be an extending maximal subring.
	Then $A$ satisfies the property $P_2$ in $R$.
\end{lemma}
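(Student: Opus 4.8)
The plan is to argue by contradiction. Suppose $A \subsetneq R$ is an extending maximal subring but $P_2$ fails: there exist $r, q \in R$ with $rq \in A$ yet $r \notin A$ and $q \notin A$. Since $A$ is maximal, the subring $A[r]$ generated by $A$ and $r$ must equal $R$, and likewise $A[q] = R$. In particular $q \in A[r]$ and $r \in A[q]$, so $q$ can be written as a polynomial in $r$ with coefficients in $A$, and vice versa. The first reduction I would make is to pass to the localization or to work at the crucial maximal ideal $\mm$ of $A$: by the Ferrand--Olivier structure theorem quoted at the start, $\varphi\colon \Spec(R) \to \Spec(A)$ is an isomorphism away from $\mm$, and by Lemma~\ref{lem:MaxSubringAndLocalization} localizing at $A \setminus \mm$ keeps us in the maximal-subring situation (and cannot collapse to $R_S = A_S$, since the extension is non-trivial precisely at $\mm$). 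So I may assume $A$ is local with maximal ideal $\mm$, $R$ is the localization, and every element of $R$ not in $A$ becomes, together with $A$, a generating set for $R$.

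The heart of the argument is the following: in the extending (flat epimorphism) case, $R = A_{\pp}$ is a localization-type object at the conductor prime $\pp$ (the conductor ideal is prime by the lemma just above), or more precisely $A \hookrightarrow R$ is a flat epimorphism of rings. For a flat epimorphism with $A$ local, $R$ is again local and the map is a \emph{localization away from $\mm$}: concretely $\mm R = R$, so there is $a \in \mm$ with $a$ a unit in $R$. Now I would use this unit to manufacture the $P_2$ property directly. Given $r, q \in R$ with $rq \in A$, consider the $A$-submodule $A + Ar + Aq + \cdots$; because $A[r] = R$ and the extension is minimal, $R/A$ is a simple $A$-module, hence killed by $\mm$, so $\mm r \subseteq A$ and $\mm q \subseteq A$. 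Pick $a \in \mm$ with $ar \in A$; then $ar$ is a unit times $r$ inside $R$... but units of $R$ landing in $A$ need not be units of $A$. The cleaner route: since $R/A \cong A/\mm$ as $A$-modules (simplicity plus the residue field identification from the structure theorem), write $r = r_0 + a_0^{-1}\rho$ and $q = q_0 + \cdots$ for suitable generator $\rho$ of $R/A$; then $rq \in A$ forces the $\rho^2$-coefficient to vanish modulo $A$, and because $R$ is generated over $A$ by $\rho$ with $\rho^2 \in A + A\rho$ (the module being $2$-generated as a ring but the quotient $1$-dimensional), a short computation in $A[\rho]/A$ pins one of $r,q$ into $A$.

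I expect the main obstacle to be making the passage "$R$ generated over $A$ by a single element $\rho$ with controlled relation $\rho^2 = \alpha + \beta\rho$" fully rigorous in the extending case, where $R$ is \emph{not} a finite $A$-module and $\rho$ is not integral over $A$. The resolution I would pursue: after localizing at $\mm$, use that $\mm R = R$ to find $u \in \mm$ that is invertible in $R$, set $\rho = u^{-1}$, and check that $R = A[u^{-1}]$ (this is exactly the flat-epimorphism/localization picture) and that $R/A$ is spanned over $A/\mm$ by the class of $u^{-1}$ — this last point is where minimality of the extension is used. Once $R = A[u^{-1}]$ with $u \in \mm$, any $r \in R$ is $r = \sum_{i=0}^{n} c_i u^{-i}$ with $c_i \in A$; the valuation-like invariant "smallest power of $u^{-1}$ needed" is sub-multiplicative, and $rq \in A$ means this invariant is $\le 0$ for the product, forcing it to be $0$ for $r$ or for $q$, i.e. $r \in A$ or $q \in A$. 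Verifying that this invariant is well-defined (independent of the representation) and genuinely sub-additive — essentially that $u^{-1}$ behaves like a uniformizer with respect to the conductor prime — is the technical crux; it should follow from $\bigcap_n u^n R = 0$ together with primeness of the conductor, but that is the step I would budget the most care for.
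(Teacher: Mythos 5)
The paper offers no proof of this lemma at all: it is quoted verbatim from Ferrand--Olivier \cite[Proposition~3.1]{FeOl1970Homomorphisms-mini}, so your argument has to stand entirely on its own. As written it does not, for three reasons. First, the claim that ``$R/A$ is a simple $A$-module, hence killed by $\mm$'' (and, later, that $R/A$ is spanned over $A/\mm$ by the class of $u^{-1}$) is false in the extending case: simplicity would give $R = A + A\rho$ for any $\rho \in R \setminus A$, so $R$ would be a finite $A$-module and $\Spec(R) \to \Spec(A)$ would be surjective --- exactly what ``extending'' excludes. Minimality of the ring extension controls intermediate \emph{rings}, not intermediate $A$-modules. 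You half-acknowledge this and pivot, but the pivot inherits the same confusion. Second, ``$\mm R = R$, so there is $a \in \mm$ with $a$ a unit in $R$'' is a non sequitur: $\mm R = R$ only gives $1 = \sum m_i r_i$ with several $m_i$. A single $u \in \mm$ becoming a unit in $R$ must lie outside every prime of $A$ other than $\mm$. Such a $u$ does in fact exist, but the only route to it I can see goes through the conductor being a prime ideal and $A_{\mm}$ modulo the conductor being a one-dimensional valuation ring --- and that is \cite[Proposition~3.3]{FeOl1970Homomorphisms-mini}, which in Ferrand--Olivier is deduced \emph{after} (and essentially \emph{from}) the $P_2$ property, since primeness of the conductor is itself an immediate consequence of $P_2$. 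Using it here without an independent proof risks circularity. (The assertion that $R$ is local is likewise unjustified, though not essential to your plan.)

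Third, and decisively: even granting $R = A[u^{-1}]$ with $u \in \mm$ a unit of $R$ (which does follow by maximality once such a $u$ is found), your invariant $\nu(r) = \min \{\, n \geq 0 \ | \ u^n r \in A \,\}$ is well defined and satisfies $\nu(rq) \leq \nu(r) + \nu(q)$ --- but that inequality points the wrong way. What you need is the lower bound: $\nu(r) \geq 1$ and $\nu(q) \geq 1$ imply $\nu(rq) \geq 1$, and this statement is word for word the property $P_2$. So the step you propose to ``budget the most care for'' is not a technical verification at the end; it is the entire content of the lemma. The ingredients you name do not supply it: $\bigcap_n u^n R = R \neq 0$ because $u$ is a unit of $R$ (you presumably mean $\bigcap_n u^n A$, but even that, together with primeness of the conductor, gives no lower bound on $\nu(rq)$). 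What would supply it is knowing that $A$ is the non-negative locus of an honest valuation on a quotient of $R$ --- i.e.\ precisely the structure theory that Ferrand--Olivier derive from $P_2$, not the other way around. In short, the sketch reduces the lemma to itself; you need either to reproduce Ferrand--Olivier's actual argument in the flat-epimorphism case or to find an independent proof that the conductor is prime and that $A_{\mm}$ modulo the conductor is a valuation ring.
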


The next lemma shows, that
the extending maximal subrings of a field have a well known characterization.
It is a direct consequence of 
\cite[Proposition~3.3]{FeOl1970Homomorphisms-mini}.

\begin{lemma}
	\label{lem:ExtendingMaxOfField}
	Let $K$ be a field and let $R \subsetneq K$ be a subring. Then,
	$R$ is an extending maximal subring of $K$ if and only if $R$ is a
	one-dimensional valuation ring of $K$.
\end{lemma}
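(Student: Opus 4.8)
The plan is to prove the two implications separately: the forward one rests on the property $P_2$ (Lemma~\ref{lem:PropertyP_2ForExtMax}), and both directions use the classical description of the overrings of a valuation domain inside its fraction field. Before starting I would record one small observation that is used in both directions: \emph{an extending maximal subring of a field is never itself a field}. Indeed, if $R \subsetneq K$ is a maximal subring and $R$ happens to be a field, then $\Spec(K) \to \Spec(R)$ is a surjection, since both schemes consist of a single point; hence $R$ is by definition a \emph{non}-extending maximal subring, contrary to assumption.

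For ``$\Rightarrow$'', let $R \subsetneq K$ be an extending maximal subring. By Lemma~\ref{lem:PropertyP_2ForExtMax} the ring $R$ satisfies $P_2$ in $K$, so for every $x \in K^\ast$ the relation $x \cdot x^{-1} = 1 \in R$ forces $x \in R$ or $x^{-1} \in R$; thus $R$ is a valuation ring of $K$, and in particular $\operatorname{Frac}(R) = K$. Now recall that the overrings of the valuation domain $R$ inside $K = \operatorname{Frac}(R)$ are exactly the localizations $R_{\pp}$ with $\pp \in \Spec(R)$, and that $\pp \mapsto R_{\pp}$ is an order-reversing bijection from $\Spec(R)$ onto this set of overrings (with $R_{(0)} = K$ and $R_{\mm} = R$ for the maximal ideal $\mm$ of the local ring $R$). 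Maximality of $R$ together with $R \neq K$ then shows that $R$ and $K$ are the only overrings, hence $\Spec(R) = \{(0), \mm\}$; by the preliminary observation $R$ is not a field, so $(0) \neq \mm$ and therefore $\dim R = 1$.

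For ``$\Leftarrow$'', let $R$ be a one-dimensional valuation ring with $\operatorname{Frac}(R) = K$ and maximal ideal $\mm$, so that $\Spec(R) = \{(0), \mm\}$. By the same description of overrings, the only rings $B$ with $R \subseteq B \subseteq K$ are $R = R_{\mm}$ and $K = R_{(0)}$, so $R$ is a maximal subring of $K$. To see it is extending, note that the image of the induced map $\Spec(K) \to \Spec(R)$ is the single point $(0) \cap R = (0)$; since $\dim R = 1$ we have $(0) \neq \mm$, so this map is not surjective, and therefore $R$ is an extending maximal subring of $K$.

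The only step that is not purely formal is the classical fact that the overrings of a valuation domain inside its fraction field are precisely its localizations at prime ideals, together with the order-reversing bijection onto $\Spec$; I would simply cite a standard reference (e.g. Bourbaki, \emph{Commutative Algebra}, Ch.~VI), so I do not anticipate any real obstacle. The one point that needs a little care — and which the preliminary observation is there to handle — is ruling out that the maximal subring in question is a field, equivalently that its maximal ideal is nonzero, since otherwise the ``one-dimensional'' conclusion would fail.
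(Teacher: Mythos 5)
Your proof is correct. Note that the paper does not actually prove this lemma: it is stated as a direct consequence of \cite[Proposition~3.3]{FeOl1970Homomorphisms-mini}, so there is no in-text argument to compare against. What you supply is a clean self-contained substitute: the forward direction extracts the valuation-ring property from $P_2$ via $x\cdot x^{-1}=1$ (the same mechanism Ferrand--Olivier use), and then both directions are finished by the classical order-reversing bijection between $\Spec$ of a valuation domain and its overrings in the fraction field. Your preliminary observation that an extending maximal subring of a field cannot itself be a field is the right way to rule out $\dim R=0$; in fact in the forward direction it is also automatic, since a valuation subring of $K$ with fraction field $K$ that is a field would equal $K$, contradicting $R\subsetneq K$. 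The only external input you rely on is the standard description of overrings of a valuation domain, which is indeed in Bourbaki (Ch.~VI) and is unproblematic. So the proposal is a valid, and arguably more informative, replacement for the paper's citation.
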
 

Let us state and prove the following rather technical lemma for future use.

\begin{lemma}
	\label{lem:ResidueField}
	Let $C$ be a Noetherian domain such that the quotient field
	$Q(C)$ is not a finitely
	generated $C$-algebra. Let $A \subsetneq C[y]$ be an extending
	maximal subring that contains $C$ and denote by $\mm$ the 
	crucial maximal ideal of $A$. Then $\mm \cap C \neq 0$.
\end{lemma}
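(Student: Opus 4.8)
The plan is to argue by contradiction: suppose $\mm \cap C = 0$. I want to derive from this that $Q(C)$ is a finitely generated $C$-algebra, contradicting the hypothesis. The idea is that if the crucial maximal ideal of $A$ does not meet $C$, then localizing at the multiplicative set $C \setminus \{0\}$ should not kill $A$ (since the crucial maximal ideal survives), so by Lemma~\ref{lem:MaxSubringAndLocalization}, $A_{C \setminus \{0\}}$ is a maximal subring of $C[y]_{C\setminus\{0\}} = Q(C)[y]$, containing $Q(C)$. Then I would use the structure of extending maximal subrings in the one-variable polynomial ring over a field together with the property $P_2$.

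First I would check that $A_{C \setminus\{0\}} \neq Q(C)[y]$: the crucial maximal ideal $\mm$ of $A$ satisfies $\mm \cap (C\setminus\{0\}) = \emptyset$ by our assumption, so $\mm A_{C\setminus\{0\}}$ is a proper ideal, hence $A_{C\setminus\{0\}} \subsetneq Q(C)[y]$. By Lemma~\ref{lem:MaxSubringAndLocalization}, $B := A_{C\setminus\{0\}}$ is an extending maximal subring of $Q(C)[y]$ containing $Q(C)$. Now I would analyze extending maximal subrings $B$ of $F[y]$ (with $F = Q(C)$ a field) that contain $F$. By Lemma~\ref{lem:PropertyP_2ForExtMax}, $B$ satisfies $P_2$ in $F[y]$. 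I would argue that such a $B$ must be of a very restricted form — essentially, the conductor is a prime ideal $\pp$ of $F[y]$ (by the conductor lemma), and $P_2$ plus maximality should force $B$ to contain $F[y]$ localized suitably, or more precisely force $B = F + \pp$ type behavior isn't possible unless... Actually, the cleanest route: since $B \supseteq F$ and $B$ is a $k$-subalgebra (even an $F$-subalgebra up to the localization) of the PID $F[y]$, and $B$ is Noetherian? Not obviously. Let me instead use: $B$ is a maximal subring of $F[y]$, and $F[y]$ is a finitely generated $F$-algebra (indeed a polynomial ring in one variable).

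The key point I would push: an extending maximal subring $B$ of $F[y]$ containing $F$ must itself be a finitely generated $F$-algebra. Here is why I expect this: the conductor $\pp = (B : F[y])$ is a nonzero prime of $F[y]$ (it's nonzero because $B \neq F[y]$ forces, via the extending/flat-epimorphism structure, a nontrivial conductor supported at the crucial ideal), say $\pp = (g)$ for an irreducible $g$, or $\pp = 0$. If $\pp = (g) \neq 0$, then $F[y]/(g)$ is a finite field extension of $F$, and $B/(g)$ is a maximal subring of it, hence a field (finitely generated over $F$); pulling back, $B$ is generated over $F$ by $g$ and finitely many elements mapping to generators of $B/(g)$ — so $B$ is finitely generated over $F$. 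Then $F[y]$ is a finite $B$-module in this (non-extending!) situation — but wait, extending means \emph{not} finite, so actually $\pp \ne 0$ is the non-extending case. Hence in the extending case $\pp = 0$, i.e.\ $B$ is \emph{birational} to $F[y]$ with trivial conductor; combined with $P_2$ and $B \supsetneq F$, $B$ should be a localization-type ring, and in particular still $F$ is not finitely generated over... hmm.

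**The hard part** will be nailing down exactly which rings arise as extending maximal subrings of $F[y]$ containing $F$ and extracting from that the finite-generation statement that contradicts the hypothesis on $Q(C)$. Concretely, I suspect the right statement is: $B = A_{C\setminus\{0\}}$ will turn out to equal $Q(C)$ itself is impossible (that's a field, not maximal in $Q(C)[y]$ with the right properties), or $B$ contains an element of $F[y]\setminus F$ which together with $P_2$ forces $Q(C)$ to be recovered as a ring of fractions of $C[y]$ inside $B$, making it finitely generated. Alternatively — and this may be the cleaner path — I would show directly that $\mm \cap C = 0$ forces $A \supseteq C_{\mm\text{-related localization}}$ in a way that exhibits $Q(C)$ as $C$ localized at finitely many elements. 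I would set this up by taking $0 \ne a \in \mm$ (exists since $\mm \ne 0$ as $A \subsetneq C[y]$ and $C$ being the ground ring $\mm$ must be nonzero), writing $a = \sum c_i y^i \in C[y]$, and chasing how $P_2$ interacts with the coefficients $c_i \in C$; the goal is to show $1/c \in A$ for the relevant $c\in C$, whence $A$ sees all of $Q(C)$ and, being between $C[y]$-related finitely generated rings, forces $Q(C)$ finitely generated over $C$ — the desired contradiction. The main obstacle is making the passage from "$P_2$ holds" to "enough inverses of elements of $C$ lie in $A$" fully rigorous; I would likely need to invoke the explicit description of the induced map on spectra from the Ferrand--Olivier theorem to control where $\mm$ sits.
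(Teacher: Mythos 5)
You have correctly identified the target of the contradiction (forcing $Q(C)$ to be a finitely generated $C$-algebra), and your opening observation that $P_2$ applied to an element of positive $y$-degree yields some $0\neq c\in C$ with $cy\in A$ is exactly the first move of the paper's proof. But the central mechanism is missing, and you say so yourself. The paper's key idea is not to localize at $C\setminus\{0\}$ nor to invert elements of $C$ inside $A$; it is to work with the residue field $A/\mm$. Since $\mm\cap C=0$, the field $A/\mm$ contains $Q(C)$. Using $cy\in A$ one defines a $C$-algebra homomorphism $\sigma\colon C[y]\to A/\mm$ by $y\mapsto \pi(cy)/\pi(c)$ and checks by induction on $y$-degree (again via $P_2$) that $\sigma$ agrees with $\pi$ on $A$; hence $\sigma$ is surjective and $A/\mm=C[a]$ for a single element $a$, which must be algebraic over $Q(C)$ because $C[a]$ is a field. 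An Artin--Tate style argument (adjoin the coefficients of the minimal polynomial of $a$ to $C$ to get a Noetherian ring $C_0$ over which $A/\mm$ is a finite module, then observe that the submodule $Q(C)$ is finitely generated) yields the contradiction. Nothing in your sketch supplies a substitute for this step: showing $1/c\in A$ for one or finitely many $c\in C$ would not exhibit $Q(C)$ as a finitely generated $C$-algebra, and you give no mechanism by which ``$A$ sees all of $Q(C)$.''

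There is also a concrete error in the route you sketch through the conductor. You assert that a nonzero conductor $\pp=(g)$ forces the non-extending (finite) case, and hence that extending maximal subrings of $F[y]$ containing $F$ have zero conductor. This is false: if $g$ is irreducible and $B/(g)$ is a one-dimensional valuation ring of the field $F[y]/(g)$ (rather than a maximal subfield), then $B$ is an \emph{extending} maximal subring with conductor $(g)\neq 0$; compare Proposition~\ref{prop:conductor}, where $K^++(y-\alpha)K[y]$ is extending with conductor $(y-\alpha)K[y]$. Finally, your justification that $A_{C\setminus\{0\}}\subsetneq Q(C)[y]$ is too quick as written: properness of $\mm A_{C\setminus\{0\}}$ only rules out equality once you invoke that $\mm C[y]=C[y]$ in the extending case. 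In sum, the proposal is a plan with the correct starting lemma and correct target but with the decisive intermediate argument absent and one of the proposed detours resting on a false dichotomy.
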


\begin{proof}
	Assume that $\mm \cap C = 0$. Then we have the following commutative
	diagram
	\[
		\xymatrix{
			C \ar[r] \ar[d] & A \ar@{->>}[d]^-{\pi} \ar[r] & C[y] \\
			Q(C) \ar[r] & A / \mm \, .
		}
	\]
	As $A$ is a maximal subring of $C[y]$, 
	we have $A \not\subseteq C$ and thus there exists
	$f \in A$ with $\deg_y(f) > 0$. Let $f = f_n y^n + \ldots + f_1 y + f_0$
	where $f_i \in C$, $f_n \neq 0$. We have
	\[
		y (f_n y^{n-1} + \ldots + f_1) = f - f_0 \in A \, .
	\]
	Since $A$ satisfies the property $P_2$ in $C[y]$ and since
	$y \not\in A$ we get $f_n y^{n-1} + \ldots + f_1 \in A$.
	Proceeding in this way it follows that there exists $0 \neq c \in C$
	such that $cy \in A$.
	Let us define the $C$-algebra homomorphism $\sigma$ by
	\[
		\sigma \colon C[y] \longrightarrow A / \mm \, , \quad
		y \mapsto \frac{\pi(cy)}{\pi(c)} \, .
	\]
	We claim that $\sigma$ and $\pi$ coincide on $A$.
	We proceed by induction on the $y$-degree of the elements in $A$. 
	By definition, $\sigma$ and 
	$\pi$ coincide on $C$, i.e. they coincide on 
	the elements of $y$-degree equal to zero.
	Let $g = g_n y^n + \ldots + g_1 y + g_0 \in A$ and assume that 
	$g_n \neq 0$, $n > 0$.
	As before, we get $y(g_n y^{n-1} + \ldots + g_1) \in A$ and
	$g_n y^{n-1} + \ldots + g_1 \in A$.
	Thus we have
	\begin{align*}
		\pi(g) &= \pi(y(g_n y^{n-1} + \ldots + g_1)) + \pi(g_0) \\
			 &= \frac{\pi(cy(g_n y^{n-1} + \ldots + g_1))}{\pi(c)} + \pi(g_0) \\
			 &= \frac{\pi(cy)}{\pi(c)} \pi(g_n y^{n-1} + \ldots + g_1) + \pi(g_0) \\
			 &= \sigma(y) \sigma(g_n y^{n-1} + \ldots + g_1) + \sigma(g_0) \\
			 & = \sigma(g) \, ,
	\end{align*}
	where we used in the second last equality the induction hypothesis. 
	This proves the claim. Since $\pi$ is surjective, $\sigma$ is surjective
	too. Hence there exists $a \in A / \mm$ which is algebraic over $Q(C)$
	such that $A/\mm$ is generated by $a$ as a $C$-algebra.
	Let $h_0 + h_1 x + \ldots + h_m x^m + x^{m+1}$ be the 
	minimal polynomial of $a$ over $Q(C)$ and let
	\[
		C_0 = C[h_0, \ldots, h_m] \subseteq Q(C) \, .
	\]
	As $C$ is Noetherian, $C_0$ is Noetherian. Moreover,
	$A / \mm$ is generated by $1, a, \ldots, a^m$ as a $C_0$-module.
	Hence, $Q(C)$ is a finitely generated $C_0$-module.
	Thus $Q(C)$ is a finitely generated $C$-algebra, a contradiction.
\end{proof}

\section{The one-dimensional case}
\label{OneDimensionalCase.sec}

Let $\kk$ be an algebraically closed field. 
The purpose of this section is to classify all extending maximal $\kk$-subalgebras 
of a given one-dimensional affine $\kk$-domain $R$. 
The key ingredient is the following observation. 

\begin{lemma}
	\label{lem:SubringOfOneDimensionalAffineDomain}
	Let $A$ be a $\kk$-subalgebra of the one-dimensional
	affine $\kk$-domain $R$. Then either $A = \kk$ or
	$A$ is a one-dimensional affine $\kk$-domain.
\end{lemma}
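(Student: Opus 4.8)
The plan is to show that $A$ is a domain (immediate, being a subring of the domain $R$) and that, unless $A=\kk$, it is a finitely generated $\kk$-algebra of Krull dimension $1$. Write $K=Q(R)$ and $L=Q(A)$, so that $K$ is a finitely generated field extension of $\kk$ of transcendence degree $1$ and $\kk\subseteq L\subseteq K$. If every element of $A$ is algebraic over $\kk$, then $A\subseteq\kk$ since $\kk$ is algebraically closed, hence $A=\kk$; so from now on assume $A$ contains an element transcendental over $\kk$. Then $\operatorname{trdeg}_\kk L=1$, the extension $K/L$ is finite, and $L$ is a finitely generated field extension of $\kk$ of transcendence degree $1$, i.e. the function field of a smooth projective curve $\bar Y$ over $\kk$.

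The key observation is that the poles of the elements of $A$, viewed as rational functions on $\bar Y$, all lie in one fixed finite set of closed points of $\bar Y$. Indeed, $A\subseteq R\subseteq\tilde R$, where $\tilde R$ denotes the normalization of $R$, and $\Spec\tilde R$ is the complement of a finite set of closed points in the smooth projective model $\bar X$ of $K$; hence every element of $A$, regarded on $\bar X$, has poles only among those finitely many points. Restricting the corresponding valuations of $K$ to $L$ (they stay nontrivial because $K/L$ is algebraic) produces a finite set $T$ of closed points of $\bar Y$ that contains every pole of every element of $A$. Moreover $T\neq\emptyset$, since any transcendental element of $A$ has at least one pole on $\bar Y$.

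Now I would build a well-chosen finitely generated $\kk$-subalgebra $B$ of $A$ whose normalization already contains $A$. For each $w\in T$ choose $a_w\in A$ with a pole at $w$ (possible by the very definition of $T$), and set $B=\kk[\,a_w : w\in T\,]$. Since $B$ is a finitely generated $\kk$-domain and $L/Q(B)$ is finite, the integral closure $\bar B$ of $B$ in $L$ is a module-finite $B$-algebra by the finiteness of normalization; in particular $\bar B$ is an affine Dedekind domain with fraction field $L$, so $\Spec\bar B=\bar Y\setminus T_B$ where $T_B=\{\,w : B\not\subseteq\OOO_{\bar Y,w}\,\}$. By the choice of the $a_w$ one has $T\subseteq T_B$, and conversely $T_B\subseteq T$ because the poles of each $a_w\in A$ lie in $T$; hence $T_B=T$, and therefore $\bar B=\OOO_{\bar Y}(\bar Y\setminus T)\supseteq A$. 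Consequently $\bar B$ is a Noetherian $B$-module, hence so is its $B$-submodule $A$; being module-finite over the finitely generated $\kk$-algebra $B$, the ring $A$ is itself finitely generated over $\kk$, and then $\dim A=\operatorname{trdeg}_\kk Q(A)=1$.

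The main obstacle is precisely the third step: one cannot simply take $B=\kk[x]$ for an arbitrary transcendental $x\in A$ and invoke module-finiteness of $R$ (or $\tilde R$) over $\kk[x]$, because $A$ typically contains elements that are not integral over $\kk[x]$ (compare $\kk[t]\subseteq\kk[t,t^{-1}]$ with $\kk[t^{-1}]\subseteq\kk[t,t^{-1}]$), and a Noether normalization element of $R$ need not lie in $A$. One must instead pick the generators of $B$ so that $B$ already detects every point at infinity seen by $A$, i.e. so that $A$ lies inside the normalization of $B$. The only delicate point in carrying this out is the equality $T_B=T$ — that normalizing $B$ reintroduces no point at infinity outside $T$ — which rests on the fact that a valuation ring is integrally closed in its own fraction field.
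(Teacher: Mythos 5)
Your proof is correct, but it takes a genuinely different route from the paper's. The paper argues abstractly: it applies the Krull--Akizuki theorem to $\kk[a]\subseteq A$ (for a transcendental $a\in A$) to conclude that $A$ is Noetherian, then invokes results of Onoda--Yoshida to get $\dim A=\operatorname{tr.deg}_{\kk}A=1$ and, after checking that the integral closure of $A$ is equidimensional of dimension one, that $A$ is an affine $\kk$-domain. You instead work on the smooth projective model $\overline{Y}$ of $Q(A)$ and exhibit an explicit finitely generated subalgebra $B\subseteq A$ whose normalization $\overline{B}$ already contains $A$; module-finiteness of $\overline{B}$ over $B$ (Noether's finiteness of integral closure) then delivers Noetherianity and finite generation of $A$ in one stroke --- an Artin--Tate/Eakin--Nagata-style argument made concrete by controlling poles, and you correctly identify why the naive choice $B=\kk[x]$ fails. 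The only point to tidy up is the definition of $T$: you introduce $T$ as the set of restrictions to $Q(A)$ of the valuations at the points of $\overline{X}\setminus\Spec\tilde R$ and assert only that it \emph{contains} every pole of every element of $A$, yet later choose $a_w\in A$ with a pole at each $w\in T$ ``by the very definition of $T$''. Simply take $T$ to be the (possibly smaller) set of points of $\overline{Y}$ that actually occur as poles of elements of $A$ --- still finite by your containment argument and nonempty because a transcendental element of $A$ must have a pole --- and every subsequent step goes through verbatim. Both proofs ultimately rest on finiteness of normalization; yours is more self-contained and geometric, while the paper's is shorter but leans on external references.
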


\begin{proof}
	We can assume that $A \neq \kk$.
	Then there exists $a \in A \setminus \kk$, which is transcendental over $\kk$.
	By the Krull-Akizuki-Theorem
	applied to $\kk[a]Ê\subseteq A$, it follows that $A$ is
	Noetherian, see for example \cite[Theorem~33.2]{Na1975Local-rings}. 
	By \cite[Corollary~1.2]{OnYo1982On-Noetherian-subr}, 
	we have
	\[
		\dim A = \textrm{tr.deg}_{\kk} A = 1 \, .
	\]
	Let $A'$ be the integral closure of $A$ in its quotient field. By 
	\cite[Theorem~9.3]{Ma1986Commutative-ring-t} it follows that 
	$\dim A' = 1$. In particular, $A'$ is equidimensional. 
	\cite[Theorem~3.2]{OnYo1982On-Noetherian-subr} implies now, 
	that $A$ is an affine $\kk$-domain.
\end{proof}

The next Theorem classifies all extending maximal $\kk$-subalgebras of $R$.

\begin{theorem}
	\label{thm:OneDimensional}
	Take a projective closure $\overline{X}$ of the affine curve $X = \Spec(R)$
	such that $\overline{X}$ is non-singular at every point of
	$\overline{X} \setminus X$ (such an $\overline{X}$ 
	is unique up to isomorphism).
	Let $U \subsetneq \overline{X}$ be a proper open subset that contains $X$
	and the complement $U \setminus X$ is just a single point. Then the image
	of the map on sections $\Gamma(U, \OOO_U) \to R$ is an extending 
	maximal $\kk$-subalgebra of 
	$R$ and every extending maximal $\kk$-subalgebra of $R$ is of this form.
\end{theorem}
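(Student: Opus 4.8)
The plan is to analyze extending maximal $\kk$-subalgebras $A\subseteq R$ by passing to function fields and using the characterization of one-dimensional valuation rings (Lemma~\ref{lem:ExtendingMaxOfField}). First I would show that if $U\subsetneq\overline X$ is open with $U\setminus X$ a single point $p$, then $B:=\operatorname{image}(\Gamma(U,\OOO_U)\to R)$ is an extending maximal $\kk$-subalgebra of $R$. Since $\overline X$ is nonsingular at $p$, the local ring $\OOO_{\overline X,p}$ is a DVR $V$ with quotient field $K=Q(R)$; one checks $\Gamma(U,\OOO_U)=\Gamma(X,\OOO_X)\cap V=R\cap V$ inside $K$ (functions regular on $U$ are exactly those regular on $X$ and at $p$), so $B=R\cap V$. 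Because $y\notin R$ for some $y\in K$ that is not in $V$ either (else $V\supseteq R$, forcing $p$ to already lie on the affine curve), $B\subsetneq R$. To see $B$ is maximal: given $f\in R\setminus B$, we have $f\notin V$, so $v(f)<0$ where $v$ is the valuation of $V$; then for any $g\in R$ one can, using that $R$ is a one-dimensional affine domain and hence Dedekind-like after normalization — more simply, using the $P_2$/valuation structure — write $g$ as a $B$-combination of powers of $f$ and elements of $B$, because $V$ being a DVR means $K=\bigcup_n f^{-n}V$ restricted appropriately. Cleanly: $B=R\cap V$ is the preimage under $R\hookrightarrow K$ of the maximal subring $V$ of $K$, and such preimages of maximal subrings are maximal subrings provided $R\not\subseteq V$; extendingness follows since localizing $B$ at the crucial ideal gives $V$, a flat epimorphism situation (Lemma~\ref{lem:MaxSubringAndLocalization}).

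Next I would prove that \emph{every} extending maximal $\kk$-subalgebra $A\subseteq R$ arises this way. By Lemma~\ref{lem:SubringOfOneDimensionalAffineDomain}, $A$ is a one-dimensional affine $\kk$-domain (it cannot be $\kk$ since $\kk$ is not maximal in $R$). Let $K=Q(R)$ and $L=Q(A)$. I claim $L=K$: indeed $R$ is integral over $A$? No — that would make it non-extending; rather, since $A$ is extending, $\operatorname{Spec}(R)\to\operatorname{Spec}(A)$ is an open immersion away from the crucial ideal $\mm$, so $A_\mm\subseteq R$ is again maximal (Lemma~\ref{lem:MaxSubringAndLocalization}) with the same quotient fields, and one shows using $\dim A_\mm=1$ and Lemma~\ref{lem:ExtendingMaxOfField} that after localizing further at the conductor prime, $A$ becomes a valuation ring $V$ of $K$. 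The conductor ideal $\mathfrak c$ of $A$ in $R$ is prime (by the cited Lemme~3.2), $R/\mathfrak c$ is a field equal to $A/\mathfrak c$ composed with... — more directly: let $V=A_{\mathfrak c}$ inside $K$; then $V\subsetneq R_{\mathfrak c}=K$ (since $\mathfrak c$ is the conductor and maximal in... one must check $R_{\mathfrak c}=K$, which holds because $\mathfrak c$ has height one in $R$ and $R$ is one-dimensional), so by Lemma~\ref{lem:ExtendingMaxOfField}, $V$ is a one-dimensional valuation ring of $K$, i.e. a DVR. This $V$ corresponds to a point $p$ on the nonsingular projective model $\overline X$, and since $A\subseteq R$ with $R$'s points all lying on $X$ while $A$-functions are allowed a pole... one verifies $A=R\cap V$, which by the first part equals $\Gamma(U,\OOO_U)$ for $U=X\cup\{p\}$.

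**The main obstacle** will be the second direction: extracting from an abstract extending maximal $\kk$-subalgebra $A$ the DVR $V$ with $A=R\cap V$. The subtlety is that $A$ is not a localization of $R$, so I cannot just point to a prime of $R$; instead I must build $V$ from the conductor $\mathfrak c\subseteq A$ (prime by Ferrand–Olivier), localize $A$ there, and argue that $A_{\mathfrak c}$ is a DVR of $K$ using Lemma~\ref{lem:ExtendingMaxOfField} together with the fact that $R_{\mathfrak c}$ (the localization of $R$ at the multiplicative set $A\setminus\mathfrak c$) equals $K$ — this last equality needs care, as it uses that $\mathfrak c$ meets every nonzero prime of $R$ below it, equivalently that $\operatorname{Spec}(R)\to\operatorname{Spec}(A)$ sends all but finitely many points isomorphically and the fibre over $\mathfrak c$ is all of the "missing" locus. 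Once $V=A_{\mathfrak c}$ is identified as a DVR, I must verify $R\cap V=A$ (not merely $\supseteq$): an element $f\in R\cap V$ lies in $A_{\mathfrak c}$, so $af\in A$ for some $a\in A\setminus\mathfrak c$; since $\mathfrak c=\{a\in A: aR\subseteq A\}$ and $a\notin\mathfrak c$, there is $r\in R$ with $ar\notin A$, and one plays $f$ and this $r$ off against the $P_2$ property (Lemma~\ref{lem:PropertyP_2ForExtMax}) to conclude $f\in A$. Finally, the uniqueness of $\overline X$ and the identification of DVRs of $K$ trivial on $\kk$ with points of $\overline X\setminus X$ (those not already giving points of $X$) is standard curve theory, and the edge case "$\overline X\setminus X$ a single point $\Rightarrow$ no extending maximal $\kk$-subalgebra" follows because then there is no room to choose a proper open $U\supsetneq X$ other than $\overline X$ itself, whose sections are just $\kk$.
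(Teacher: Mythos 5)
There are two genuine gaps, one in each direction. In the forward direction, your ``clean'' justification --- that $B=R\cap V$ is a maximal subring of $R$ whenever $V$ is a maximal subring (one\nobreakdash-dimensional valuation ring) of $K=Q(R)$ with $R\not\subseteq V$ --- is false. Take $R=\kk[t]$ and $V=\OOO_{\PP^1,\infty}$: then $R\not\subseteq V$ but $R\cap V=\kk$, which is not maximal in $\kk[t]$ (e.g.\ $\kk\subsetneq\kk[t^2,t^3]\subsetneq\kk[t]$). This is exactly the excluded case where $\overline{X}\setminus X$ is a single point, and it shows that any correct argument must use the hypothesis $U\subsetneq\overline{X}$, i.e.\ that there is at least one further point at infinity, which is what makes $U$ an \emph{affine} curve. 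Your more detailed sketch (writing every $g\in R$ as a $B$-combination of powers of $f$) is too vague to substitute for this; the paper instead takes an intermediate ring $A\subseteq B\subsetneq R$, invokes Lemma~\ref{lem:SubringOfOneDimensionalAffineDomain} to see $B$ is an affine curve, and argues geometrically that the birational morphism $\Spec(B)\to U$ is surjective (here the single extra point of $U\setminus X$ is used) and hence an isomorphism because $U$ is smooth there.

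In the converse direction, your plan to produce the valuation ring by localizing at the conductor prime $\mathfrak c$ breaks down because in the extending case the conductor is typically the \emph{zero} ideal: for $\kk[t]\subseteq\kk[t,t^{-1}]$ one has $ft^{-n}\notin\kk[t]$ for $n\gg0$ unless $f=0$. (The zero ideal is prime, consistent with Ferrand--Olivier, but then $A_{\mathfrak c}=K$ and also $\mathfrak c$ does not have height one in $R$.) The correct localization is at the crucial maximal ideal $\mm$: since the fibre of $\Spec(R)\to\Spec(A)$ over $\mm$ is empty, $R\otimes_A A_\mm=K$, so $A_\mm\subsetneq K$ is maximal by Lemma~\ref{lem:MaxSubringAndLocalization} and hence a one-dimensional valuation ring by Lemma~\ref{lem:ExtendingMaxOfField}; being Noetherian it is a DVR, and then $A=R\cap A_\mm$ follows directly from maximality of $A$ (no $P_2$ gymnastics needed, since $R\not\subseteq A_\mm$). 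You would still need the final geometric step that the paper makes explicit: smoothness of $\Spec(A)$ at $\mm$ together with smoothness of $\overline{X}$ along $\overline{X}\setminus X$ is what turns the birational map $\Spec(A)\dashrightarrow\overline{X}$ into an open immersion, identifying $\Spec(A)$ with a $U$ of the required form.
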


\begin{proof}
	First, note that $U \subsetneq \overline{X}$ is an affine curve, see
	\cite[Chp.~IV, Ex.~1.4]{Ha1977Algebraic-geometry}.
	Let $A \subseteq R$ be the image of $\Gamma(U, \OOO_U) \to R$
	and consider an intermediate ring $A \subseteq B \subsetneq R$.
	By Lemma~\ref{lem:SubringOfOneDimensionalAffineDomain}, 
	$B$ is a one-dimensional
	affine $\kk$-domain. Consider the induced maps
	\[
		X \stackrel{f}{\longrightarrow} 
		\Spec(B) \stackrel{}{\longrightarrow} U \, .
	\]
	As this composition is an open immersion, the first map is an open immersion.
	As $f$ is not an isomorphism, the complement $\Spec(B) \setminus f(X)$
	is non-empty. As $U \setminus X$ is a single point, this implies that 
	$\Spec(B) \to U$ is surjective.
	In fact, since $U$ is non-singular in $U \setminus X$, this map is
	an isomorphism and thus we get $A = B$. This proves that $A$ is 
	an extending maximal $\kk$-subalgebra of $R$.
	
	Conversely, let $A \subsetneq R$ be an extending maximal $\kk$-subalgebra.
	By Lemma~\ref{lem:SubringOfOneDimensionalAffineDomain}, 
	$A$ is an affine $\kk$-domain.
	Let $g \colon X \to \Spec(A)$ be the induced map on affine varieties. 
	It is an open immersion and $\Spec(A) \setminus X$ 
	consists only of the crucial maximal ideal $\mm$ of $A$.
	Consider the birational map 
	\begin{equation}
		\label{birmap.eq}
		\Spec(A) \stackrel{g^{-1}}{\dashrightarrow} X \longrightarrow 
		\overline{X} \, ,
	\end{equation}
	which is an open immersion on $g(X)$. We have to show, that this map
	is an open immersion on $\Spec(A)$. By 
	\cite[Proposition~3.3]{FeOl1970Homomorphisms-mini},
	the localization $A_\mm$ is a one-dimensional valuation ring. 
	Since $A_\mm$ is Noetherian, it is a
	discrete valuation ring. Thus $\Spec(A)$ is non-singular at $\mm$
	and therefore the birational map \eqref{birmap.eq} is an injective morphism,
	which is an open immersion locally at $\mm$ (note that $\overline{X}$
	is smooth at every point of $\overline{X} \setminus X$).
	Thus the morphism \eqref{birmap.eq} is an open immersion.
\end{proof}

\begin{example}
	Consider $R = \kk[x, y] / (y-x^3+xy^2)$. The closure of 
	$X = \Spec(R) \subseteq \AA^2$ in 
	$\PP^2$ consists of the three smooth points
	\[
		p_1 = (0: 1: 0) \, , \quad p_2 = (1: 1: 0) \, , \quad p_3 = (-1: 1: 0) \, .
	\]
	The corresponding
	extending maximal $\kk$-subalgebras of $R$ are given by the images 
	on sections of the following maps (all maps are seen as restrictions
	of maps $\AA^2_{x, y} \to \AA^2_{s, t}$)
	\begin{align*}
	p_1 \colon \quad\quad &    X \longrightarrow \{ \, t-s^4+t^2 \, \} \, , \quad 
		&(x, y) &\longmapsto (x, xy) \\
	p_2 \colon \quad\quad &	X \longrightarrow \{ \, s^2 - t + 2t^2-ts^2 \, \}  \, ,
	        \quad &(x, y) &\longmapsto (x-y, (x-y)x) \\ 
	p_3 \colon \quad\quad &	X \longrightarrow \{ \,s^2-t-2t^2+ts^2 \, \} \, , \quad
		&(x, y) &\longmapsto (x+y, (x+y)x) \, .
	\end{align*}
\end{example}

\section{\texorpdfstring{Examples of extending 
			       maximal $\kk$-subalgebras of $\kk[t, y]$ that
			       do not contain a coordinate}
		{Examples of extending maximal k-subalgebras of k[t, y] that
		do not contain a coordinate}}	  
\label{sec.ExamplesContainNotACoordinate}    

It is thus natural to ask,
whether all extending maximal $\kk$-subalgebras of $\kk[t, y]$ contain
a coordinate. In this section we construct plenty of examples, which give
a non-affirmative answer to this question. These examples
indicate that it is difficult to classify all maximal subalgebras
of $\kk[t, y]$. 

For the construction of these examples we use techniques 
of birational geometry of surfaces and the classification
of extending maximal subalgebras of one-dimensional
affine $\kk$-domains. As we fix the algebraically closed field
$\kk$, we write $\PP^n$ for $\PP^n_{\kk}$ and $\AA^n$ for $\AA^n_{\kk}$.

\begin{definition}
	\label{def:TangentOfOrder}
	Let $L \subseteq \PP^2$ be a line, let $p \in L$ be a point and let
	$\Gamma \subseteq \PP^2$ be an irreducible curve with $\Gamma \neq L$
	which passes through $p$. We say that \emph{$\Gamma$ is tangent
	to $L$ at $p$ of order at least $m$}, if there exists a sequence of blow-ups
	\[
		S_m \stackrel{\pi_m}{\longrightarrow} S_{m-1} 
		\stackrel{\pi_{m-1}}{\longrightarrow} \ldots 
		\stackrel{\pi_2}{\longrightarrow}
		S_1 \stackrel{\pi_1}{\longrightarrow} \PP^2
	\]
	such that $\pi_1$ is centered at $p$, 
	$\pi_i$ is centered at a point on the exceptional divisor
	of $\pi_{i-1}$ for $i=2, \ldots, m$ and the strict transforms of $L$ and of 
	$\Gamma$ under $\pi_1 \circ \cdots \circ \pi_m$ 
	have an intersection point on the exceptional divisor
	of $\pi_m$.
\end{definition}

The following Lemma is crucial for our construction.

\begin{lemma}
	\label{lem:Jeremy}
	Let $\Gamma \subseteq \PP^2$ be an irreducible curve
	and let $L \neq \Gamma$ be a line in $\PP^2$.
	Fix some $p \in \Gamma \cap L$. If $\Gamma$ is tangent to $L$ at $p$ 
	of order at least $2$ and if $\Gamma$ is smooth at $p$, then
	there exists no coordinate 
	$f \colon \AA^2 = \PP^2 \setminus L \to \AA^1$ such that
	the rational map $f |_{\Gamma} \colon \Gamma \dashrightarrow \AA^1$
	is defined at $p$.
\end{lemma}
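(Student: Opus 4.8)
The plan is to suppose, for contradiction, that a coordinate $f\colon\AA^2=\PP^2\setminus L\to\AA^1$ exists with $f|_\Gamma$ defined at $p$. Subtracting a constant keeps $f$ a coordinate and does not change $L$, so we may assume $f|_\Gamma(p)=0$. Put $d=\deg f$ and homogenize, $f=\tilde F/z^d$, where $z=0$ is an equation of $L$; then the polar divisor of $f$ on $\PP^2$ is $dL$ and $C_\mu:=\overline{f^{-1}(\mu)}=\{\tilde F-\mu z^d=0\}$. As $f$ is a coordinate, $\kk[t,y]/(f-\mu)$ is a polynomial ring in one variable, so every $C_\mu$ is a reduced irreducible rational plane curve of degree $d$, smooth on $\AA^2$, meeting $L$ in a single point $p_\infty$ at which it has one branch with $(C_\mu\cdot L)_{p_\infty}=d$ (all of the intersection number, by B\'ezout and the one-place-at-infinity property). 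In particular $C_0$ is smooth away from $p_\infty$.

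First I would pin down $p$. Set $a:=(\Gamma\cdot L)_p$; the hypothesis (tangency of order $\ge 2$ and smoothness at $p$) means $a\ge 3$. Since $f$ has a pole along $L\ni p$ while $f|_\Gamma(p)=0\in\AA^1$, the relation $\operatorname{ord}_p(f|_\Gamma)=(\Gamma\cdot C_0)_p-da$ (valid when $\Gamma\not\subseteq C_0$) together with $f|_\Gamma(p)=0$ forces either $\Gamma\subseteq C_0$ or $(\Gamma\cdot C_0)_p\ge da+1$. If $\Gamma\subseteq C_0$ then $\Gamma=C_0$ (both irreducible), so $\Gamma$ is smooth at $p$ and on $\AA^2$, hence a smooth rational plane curve; then $\deg\Gamma\le 2$ and $(\Gamma\cdot L)_p=d\le 2$, contradicting $a\ge 3$. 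So $\Gamma\ne C_0$; moreover $(\Gamma\cdot C_0)_p\ge da+1>0$ gives $p\in C_0\cap L=\{p_\infty\}$, i.e. $p=p_\infty$, and $(\Gamma\cdot C_0)_p\ge da+1$.

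The heart of the argument is a comparison of infinitely near points at $p=p_\infty$. Let $m_0\ge m_1\ge\cdots$ be the multiplicity sequence of the (single) branch of $C_0$ at $p$; by Noether's formula $(\Gamma\cdot C_0)_p=\sum_{j=0}^{k-1}m_j$, where $k$ is the number of infinitely near points that the smooth branch of $\Gamma$ shares with it. Write $\ell$ for the number of steps the branch of $C_0$ follows $L$, so $\sum_{j=0}^{\ell-1}m_j=(C_0\cdot L)_p=d$; likewise $\Gamma$ follows $L$ for exactly $a$ steps. Comparing how the three chains (of $L$, of $\Gamma$, of $C_0$) separate, one checks: if $k\le a$ or $\ell<a$ then $(\Gamma\cdot C_0)_p\le\sum_{j<\min(k,\ell)}m_j\le d<da+1$, a contradiction; and if $\ell>a$ the chains of $\Gamma$ and $C_0$ already part at step $a$, so again $k\le a$. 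Thus $\ell=a$ and $\Gamma$ keeps following $C_0$'s branch past step $a$, with $(\Gamma\cdot C_0)_p=d+\sum_{j=a}^{k-1}m_j$. In this last case $C_0$ is necessarily singular at $p$ (otherwise, as above, $C_0$ is a smooth rational plane curve and $d\le 2$); and one invokes the rigidity of branches at infinity of coordinate curves — equivalently, that $C_0$ rational forces $\delta_p(C_0)=\binom{d-1}{2}$, together with the Abhyankar--Moh structure, which makes the unique branch of a coordinate fibre ``as simple as possible'' — to conclude that a smooth germ $\ne C_0$ meets this branch with multiplicity at most $d$. That contradicts $(\Gamma\cdot C_0)_p\ge da+1\ge 3d+1$.

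The main obstacle is precisely that final case ($\ell=a$): bounding the contact of the smooth curve $\Gamma$ with the singular branch of $C_0$ at infinity. Everything else is bookkeeping with local intersection numbers; the real input is that coordinate curves are rectifiable, so their branch at infinity cannot be arbitrarily complicated — concretely, that its second characteristic exponent (hence the maximal contact with a smooth curve) is at most $\deg C_0=d$. A cleaner packaging of this step may go through the resolution of the pencil $|f|$: the minimal resolution $\pi\colon Z\to\PP^2$ turns $h=f\circ\pi$ into a $\PP^1$-fibration with $D=\pi^{-1}(L)$ a tree of rational curves carrying a unique horizontal component (a section) $S$; when $\ell=a$ one shows the strict transform of $C_0$ is disjoint from that of $L$ and meets $D$ only on $S$, which forces $\Gamma$'s strict transform into the same fibre and hence to pass through every base point of the pencil — constrained by $\sum e_i^2=d^2$ — yielding the same contradiction.
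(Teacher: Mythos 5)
Your setup (reducing to $f|_\Gamma(p)=0$, identifying $p=p_\infty$, and the inequality $(\Gamma\cdot C_0)_p\ge da+1$ with $a=(\Gamma\cdot L)_p\ge 3$) is correct, and the bookkeeping with infinitely near points is essentially sound up to the dichotomy on $\ell$. But the step that closes the remaining case --- the assertion that a smooth germ different from $C_0$ meets the branch of $C_0$ at $p_\infty$ with multiplicity at most $d=\deg C_0$ --- is false, and it is exactly the step you yourself identify as the heart of the argument. Concretely, take the coordinate $f=x-(y-x^2)^2$ (one has $\kk[f,\,y-x^2]=\kk[x,y]$). Its zero fibre $C_0$ has degree $d=4$, is parametrized by $s\mapsto(s^2,s^4+s)$, and has its unique point at infinity at $(0:1:0)$. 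In the chart $u=x/y$, $v=z/y$ the curve is $(v-u^2)^2=uv^3$ and its branch at the origin is $u=\tau^2/(1+\tau^3)$, $v=\tau^4/(1+\tau^3)$; the smooth germ $\{v=u^2\}$ (the conic $yz=x^2$) satisfies $v-u^2=\tau^7/(1+\tau^3)^2$, so it meets the branch with multiplicity $7>4=d$. The first characteristic exponent of this branch is $7/2$, so the parenthetical claim that it is bounded by $d$ also fails. As written, the case $\ell=a$ is therefore not closed, and the alternative ``cleaner packaging'' via the resolution of the pencil is only a sketch that does not supply the missing bound.

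The strategy can be repaired, but by a different final step: one shows that the case $\ell=a\ (\ge 3)$ never occurs, because the branch at infinity of a coordinate fibre separates from $L$ after at most two blow-ups, i.e.\ $\ell\le 2$. Indeed, writing $C_0\cap\AA^2$ as the image of $s\mapsto(P(s),Q(s))$ with (after a linear change of coordinates) $\deg P\mid\deg Q=d$ and hence $\deg P\le d/2$, the branch at $p_\infty$ has multiplicity $m_0=d-\deg P\ge d/2$, so $m_0+m_1\ge d=(C_0\cdot L)_{p_\infty}$ and the strict transforms of $C_0$ and $L$ are disjoint after two blow-ups. Since $a\ge 3>\ell$, your own case $\ell<a$ then gives $(\Gamma\cdot C_0)_p\le m_0+m_1= d<da+1$, a contradiction. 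Note two things: first, this divisibility is the genuine input, and since the paper assumes nothing about the characteristic of $\kk$ you cannot simply cite Abhyankar--Moh--Suzuki (which fails for general embedded lines in positive characteristic); for fibres of coordinates one must instead derive the degree divisibility from Jung--van der Kulk. Second, this whole route is quite different from the paper's proof, which never uses fibres of $f$ at all: there one writes the coordinate as $\pr\circ\varphi$ for an automorphism $\varphi$ of $\AA^2$, resolves the induced birational self-map of $\PP^2$ by two towers of blow-ups over $L$ (Blanc's factorization lemma), and chases the order-$2$ tangency of $\Gamma$ with $L$ through these towers to show that $\varphi(\Gamma)$ is still tangent to $L$ at the point to which $L$ is contracted.
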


\begin{proof}
	Let $\varphi \colon \PP^2 \dashrightarrow \PP^2$ be a birational map
	that restricts to an automorphism on $\PP^2 \setminus L = \AA^2$.
	Let $\pr \colon \AA^2 \to \AA^1$ be the projection given by $\pr(x, y) = x$.
	We have to prove that the rational map
	$\pr \circ \varphi |_{\Gamma} \colon \Gamma \dashrightarrow \AA^1$
	is not defined at $p$. Let $a \in \PP^2$ be the image of $p$
	under the rational map 
	$\varphi |_{\Gamma} \colon \Gamma \dashrightarrow \PP^2$, 
	which is defined at $p$ since $\Gamma$ is smooth at $p$. 
	We have $a \in L$, since either $\varphi$ contracts the line $L$ to some 
	point on $L$ or $\varphi$ maps $L$ isomorphically onto itself.
	If $a \neq (0:1:0)$, then the map
	$\pr \circ \varphi |_{\Gamma} \colon \Gamma \dashrightarrow \AA^1$ is
	not defined at $p$. Thus we can assume that $a = (0:1:0)$.
	
	Let $\sigma \colon \Bl_a(\PP^2) \to \PP^2$ be the blow-up of 
	$\PP^2$ centered at $a$.
	Then, $\pr \circ \varphi |_{\Gamma} \colon \Gamma \dashrightarrow \AA^1$ 
	is not defined at $p$ if and only if
	\[
		\Gamma \subseteq \PP^2
		\stackrel{\varphi}{\dashrightarrow} \PP^2
		\stackrel{\sigma^{-1}}{\dashrightarrow} \Bl_a(\PP^2)
	\]
	maps $p$ to the intersection point of the exceptional divisor
	of $\sigma$ and the strict transform of $L$ under $\sigma$.
	In other words, we have to prove that $\varphi(\Gamma)$
	is tangent to $L$ at $a$ of order at least $1$.
	
	If $\varphi$ is an automorphism, then
	the result is obvious, so we can assume that there exist 
	base-points of $\varphi$. By \cite[Lemma 2.2]{Bl2009The-correspondence}
	there exist birational morphisms
	$\varepsilon \colon Y \to \PP^2$ and $\eta \colon Y \to \PP^2$
	such that the following is satisfied:
	\begin{itemize}
	\item we have $\eta = \varphi \circ \varepsilon$; 

	\item no curve of self-intersection $-1$ of $Y$ is contracted by both, 
	$\varepsilon$ and $\eta$;	
	
	\item there are decompositions
	\[
		\varepsilon = 
		\varepsilon_1 \circ \cdots \circ \varepsilon_n \colon 
		Y \longrightarrow \PP^2 \quad
		\textrm{and} \quad
		\eta = \eta_1 \circ \cdots \circ \eta_n \colon Y \longrightarrow \PP^2
	\]
	where $\varepsilon_1$ (respectively $\eta_1$) is a blow-up centered at 
	a point on $L$ and  $\varepsilon_i$ (respectively $\eta_i$) is a blow-up
	centered at a point on the exceptional divisor of $\varepsilon_{i-1}$ 
	(respectively $\eta_{i-1}$) for $i > 1$;

	\item the integer $n$ is greater than or equal to $3$;

	\item the strict transform of $L$ 
	under $\varepsilon$ (respectively $\eta$) has self-intersection $-1$.
	\end{itemize}
	
	Let $q_{i-1}$ be the center of $\varepsilon_i$ and let $E_i$ be the exceptional
	divisor of $\varepsilon_i$ for $i = 1, \ldots, n$. Moreover, we denote by 
	$L_i$ the strict transform of $L$ under 
	$\varepsilon_i \circ \cdots \circ \varepsilon_1$ for $i = 1, \ldots, n$.
	Since $(L_n)^2=-1$, we see that $L$ 
	passes through $q_0$, that $L_1$ passes through $q_1$,
	but $L_i$ passes not through $q_i$ for $i > 1$. 

	By assumption, $\Gamma$ is tangent to $L$ at $p$ of order at least
	$2$, so there exists a sequence of blow-ups
	\[
		S_2 \stackrel{\pi_2}{\longrightarrow}
		S_1 \stackrel{\pi_1}{\longrightarrow} \PP^2
	\]	
	such that $\pi_1$ is centered at $p$, $\pi_2$ is centered at
	some point on the exceptional divisor of $\pi_1$ and
	the strict transforms of $L$ and of $\Gamma$ under $\pi_1 \circ \pi_2$
	intersect at one point of the exceptional divisor. 
	Denote this intersection point on $S_2$ by 
	$p_2$. Consider the birational map
	\[
		\psi = 
		\varepsilon_2^{-1} \circ \varepsilon_1^{-1} \circ \pi_1 \circ \pi_2 \, .
	\]
	This map is defined at $p_2$ and we denote by $p_2'$ its image under $\psi$.
	Since $p_2' \in L_2$ and since $q_2 \not\in L_2$ there exists exactly
	one point $r \in L_n$ that is mapped onto $p_2'$ via 
	$\varepsilon_n \circ \cdots \circ \varepsilon_3$ (note that $n \geq 3$).
	Remark that the strict transform of $\Gamma$ under $\varepsilon$
	passes through $r$.
	
	Let $r_{i-1}$ be the center of $\eta_i$ and let $F_i$ be the exceptional
	divisor of $\eta_i$ for $i = 1, \ldots, n$.
	Since $E_n$ and $L_n$ are the only curves of self-intersection $-1$
	lying in $Y \setminus \varepsilon^{-1}(\PP^2 \setminus L)$, 
	it follows that $E_n$
	is the strict transform of $L$ under $\eta$ and that 
	$\eta_n$ contracts $L_n$ i.e. $F_n = L_n$. 
	Hence we have for $i = 2, \ldots, n$
	\[
		\eta_i \circ \cdots \circ \eta_n(r) = r_{i-1} \in F_{i-1} \, .
	\]
	As $r \in L_n$, the curve $L_n$ is contracted by $\eta$ onto $\eta(r)$;
	this point being also the point where $\varphi$ contracts $L$, 
	we get $\eta(r) = a \in L$.
	Since the strict transform of $L$ under $\eta$ has self-intersection $-1$, it 
	follows that $r_1$ is the intersection point of $F_1$ and the strict 
	transform of $L$ under $\eta_1$. As the strict transform of $\Gamma$ 
	under $\varepsilon$ passes through $r$, its image passes through
	all the points $r_i$ and thus also through $r_1$.
	So the curve $\varphi(\Gamma)$ is tangent to $L$ at $a \in \PP^2$ 
	of order at least $1$.
\end{proof}

	With this lemma we can construct plenty of examples
	of extending maximal $\kk$-subalgebras of $\kk[t, y]$ that  do not contain
	a coordinate of $\kk[t, y]$.
	
	Let $X$ be an irreducible curve of $\AA^2$, which is defined by some
	polynomial $f$ in $\kk[t, y]$. 
	Let $\Gamma$ be the closure of $X$ in $\PP^2$. Assume that
	there exists a smooth point $p$ on $\Gamma$ that lies not
	in $X$ and assume that $\Gamma \setminus X$ contains more than one point. 
	Then the ring
	\[
		A = \{ \, h \in \Gamma(X, \OOO_X) \ | \ 
		\textrm{$h$ is defined at $p$} \, \}
	\]
	is an extending maximal $\kk$-subalgebra of $\Gamma(X, \OOO_X)$,
	which is finitely generated over $\kk$, see
	Theorem~\ref{thm:OneDimensional} and 
	Lemma~\ref{lem:SubringOfOneDimensionalAffineDomain}. 
	Let $a_1, \ldots, a_k \in A$ be a set of 
	generators and let $r_1, \ldots, r_k \in \kk[t, y]$ be elements such that
	$r_i |_X = a_i$.
	If $\Gamma$ is tangent to $L = \PP^2 \setminus \AA^2$ at $p$ of order 
	at least $2$, then 
	\[
		\kk[r_1, \ldots, r_k] + f \kk[t, y]
	\]
	is an extending maximal $\kk$-subalgebra 
	of $\kk[t, y]$ that does not contain a coordinate of 
	$\kk[t, y]$, see Lemma~\ref{lem:maxandcond} and Lemma~\ref{lem:Jeremy}.

\section{\texorpdfstring
	     {Classification of maximal subrings of $\kk[t, t^{-1}, y]$
	     that contain $\kk[t, y]$}
	     {Classification of maximal subrings of k[t, t^{-1}, y]
	     that contain k[t, y]} }
\label{ClassContainKty.sec}	
	     
The goal of this section is the classification of all maximal subrings of 
$\kk[t, t^{-1}, y]$ that contain $\kk[t, y]$. Let us start with a simple example.
\begin{example}
	By using Lemma~\ref{lem:maxandcond} one can see that
	\[
		A = \kk[t, y] + (y^2-t) \kk[t, t^{-1}, y]
	\]
	is a maximal subring of $\kk[t, t^{-1}, y]$, which contains $\kk[t, y]$.
	Another description of this ring is the following
	\[
		A = B \cap \kk[t, t^{-1}, y] \, , \
		\textrm{where} \ B = \kk[t^{1/2}, y] + (y-t^{1/2}) \kk[t^{1/2}, t^{-1/2}, y] \, .
	\]
	By using Lemma~\ref{lem:maxandcond}, one can see that $B$
	is a maximal subring of $\kk[t^{1/2}, t^{-1/2}, y]$, which contains
	$\kk[t^{1/2}, y]$. However, the ring $B$ is of a simpler form than $A$
	(we replaced $y^2-t$ by a linear polynomial in $y$).
\end{example}
	     
The general strategy works in a similar way. First we ``enlarge" the coefficients
$\kk[t]$ to some ring $F$ in such a way, that all maximal subrings 
of $F_t[y]$ that contain $F[y]$
have a simple form (in the example, we replaced $\kk[t]$ 
by $F = \kk[t^{1/2}$]). 
Then we prove that the intersection of such a simple maximal subring with
$\kk[t, t^{-1}, y]$ yields a maximal subring of $\kk[t, t^{-1}, y]$ that contains 
$\kk[t, y]$ and that we receive by this intersection-process every maximal subring
that contains $\kk[t, y]$.

For the ``enlargement" of the coefficients we have to introduce some notation and
terminology. 
	     
\subsection{Notation and terminology}

Let $\kk$ be an algebraically closed field (of any characteristic).
We denote by $\kk[[t^{\QQ}]]$ the Hahn field over $\kk$
with exponents in $\QQ$, i.e. the field of all 
formal power series
\[
	\alpha = \sum_{s \in \QQ} a_s t^s	
\]
with coefficients $a_s \in \kk$ and with the property that
the support 
\[
	\supp(\alpha) = \{ \, s \in \QQ \ | \ a_s \neq 0 \, \}
\]
is a well ordered subset of $\QQ$. 
There exists a natural valuation on $\kk[[t^\QQ]]$, namely
\[	
	\nu \colon \kk[[t^{\QQ}]] \longrightarrow \QQ \, , \quad 
	\alpha \longmapsto \min \supp(\alpha) \, .
\]
The valuation ring of $\nu$ we denote by $\kk[[t^{\QQ}]]^+$.
More generally, for any subring $B \subseteq \kk[[t^{\QQ}]]$
we denote by $B^+$ the subring of elements with $\nu$-valuation $\geq 0$,
i.e.
\[
	B^+ = \{ \, b \in B \ | \ \nu(b) \geq 0 \, \} \, .
\]
%Moreover, we denote by $\KK$ the algebraic closure of $\kk(t)$ inside
%the Hahn field.
Finally, for any subring $A \subseteq \kk[[t^{\QQ}]][y]$ 
we denote by $A_1$ the subset of degree one elements, i.e.
\[
	A_1 = \{ \, a \in A \ | \ \deg_y(a) = 1 \, \} \, .
\]

\subsection{Organisation of the section}
In Subsection~\ref{subsec:biggclass}, 
we classify all maximal subrings of $K[y]$ that contain
$K^+[y]$ for any algebraically closed field $K \subseteq \kk[[t^{\QQ}]]$
that contains the field of rational functions $\kk(t)$ and satisfies the so called
\emph{cutoff-property} (see Definition~\ref{cutoff.def}). For example, the Hahn
field $\kk[[t^{\QQ}]]$, the Puiseux field $\bigcup_n \kk((t^{1/n}))$
or the algebraic closure of $\kk(t)$ enjoy the cutoff-property 
(see Example~\ref{exp:cutoff}).

In Subsection~\ref{subsec:descriptionByIntersection}, 
we prove that for any maximal subring $A \subsetneq K[y]$
containing $K^+[y]$, the intersection 
$A \cap \kk[t, t^{-1}, y]$ is again a maximal subring of $\kk[t, t^{-1}, y]$.
Moreover, we prove that any maximal subring of $\kk[t, t^{-1}, y]$ that contains
$\kk[t, y]$ can be constructed as an intersection like above.

Thus we are left with the question, which of the maximal subrings
of $K[y]$ that contain $K^+[y]$ give the same ring, after intersection with 
$\kk[t, t^{-1}, y]$. We give an answer to this question in 
Subsection~\ref{subsec:Class}.

\subsection{Classification of maximal 
subrings of $K[y]$ that contain $K^+[y]$}
\label{subsec:biggclass}
Throughout this subsection, we fix an algebraically closed subfield 
$K \subseteq \kk[[t^{\QQ}]]$ that contains the field of rational functions $\kk(t)$.

%First, we classify the subrings of $K[y]$ containing $K^+[y]$ 
%that satisfy the $P_2$-property in $K[y]$ (see Proposition~\ref{prop:P_2}).
%We use then this result for the classification of all maximal subrings of $K[y]$
%that contain $K^+[y]$. This will be done in Proposition~\ref{prop:P_2}.

\begin{proposition}
	\label{prop:rep}
	Let $K^+[y] \subseteq A \subsetneq K[y]$ be an intermediate ring
	and assume that $A \subsetneq K[y]$ satisfies the property $P_2$ in $K[y]$.
	Then $A = K^+[A_1]$.
\end{proposition}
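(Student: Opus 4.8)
The plan is to show $A = B$, where $B := K^+[A_1]$; the inclusion $B \subseteq A$ is clear (note $y \in K^+[y] \subseteq A$ has $y$-degree $1$, so $y \in A_1$ and hence $B \supseteq K^+[y]$). For the reverse inclusion I would first record that $A \cap K = K^+$: indeed $A \cap K$ is a ring lying between the rank-one valuation ring $K^+$ and its fraction field $K$, hence equals $K^+$ or $K$, and $A\cap K = K$ is impossible since then $K[y] = K^+[y] + K \subseteq A$, contradicting $A \subsetneq K[y]$. I would then prove by induction on $n$ that $A_{\le n} := \{f \in A \mid \deg_y f \le n\} \subseteq B$; since $A = \bigcup_n A_{\le n}$ this finishes the proof. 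The cases $n \le 1$ are immediate: $A_{\le 0} = A\cap K = K^+ \subseteq B$, and every element of $y$-degree exactly $1$ lies in $A_1 \subseteq B$.

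The engine of the induction is a statement about leading coefficients. For $n \ge 0$ let $L_n \subseteq K$ consist of $0$ together with all coefficients of $y^n$ in elements of $A$ of $y$-degree exactly $n$. Each $L_n$ is a $K^+$-submodule of $K$, i.e. a fractional ideal; one has $L_0 = K^+$ and $1 \in L_1$ (as $y^n \in A$). Since $B = K^+[A_1]$ is spanned over $K^+$ by products of elements of $A_1$, and a product of $n$ elements of $A_1$ has $y$-degree exactly $n$ with leading coefficient the product of the individual leading coefficients, the set of leading coefficients of $y$-degree-$n$ elements of $B$ is exactly the $n$-th power $L_1^n$ of the fractional ideal $L_1$. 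Granting the claim $L_n = L_1^n$, the inductive step runs as follows: given $f \in A$ of $y$-degree $n$ with leading coefficient $c \in L_n = L_1^n$, write $c = \sum_k \gamma_{1,k}\cdots\gamma_{n,k}$ with $\gamma_{i,k}\in L_1 \setminus\{0\}$, choose $a_{i,k}\in A_1$ with leading coefficient $\gamma_{i,k}$, and set $h = \sum_k a_{1,k}\cdots a_{n,k} \in B$; then $h$ has $y$-degree $n$ and the same leading coefficient $c$ as $f$, so $f - h \in A_{\le n-1} \subseteq B$ by induction, and $f \in B$.

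So the heart of the matter is to prove $L_n \subseteq L_1^n$ for $n \ge 2$ (the opposite inclusion being clear). Fix $f \in A$ of $y$-degree $n$ with leading coefficient $c_n$, and, using that $K$ is algebraically closed, factor $f = c_n\prod_{i=1}^n(y-\beta_i)$ with $\beta_i \in K$. For each $c \in K^\ast$ apply the property $P_2$ to the factorization $f = \bigl(c(y-\beta_1)\bigr)\cdot\bigl((c_n/c)\prod_{i\ge 2}(y-\beta_i)\bigr)$: one of the two factors lies in $A$. If for \emph{some} $c$ both factors lie in $A$, then $c(y-\beta_1)\in A_1$ gives $c\in L_1$, while $(c_n/c)\prod_{i\ge2}(y-\beta_i)\in A$ has $y$-degree $n-1$, so $c_n/c \in L_{n-1} = L_1^{n-1}$ by the induction hypothesis, whence $c_n \in L_1\cdot L_1^{n-1} = L_1^n$. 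If for \emph{no} $c$ both factors lie in $A$, then by $P_2$ exactly one does for every $c \in K^\ast$, so $K^\ast$ is the disjoint union of the upper set $L_1 \cap K^\ast$ and the lower set $\{c \in K^\ast \mid (c_n/c)\prod_{i\ge 2}(y-\beta_i)\in A\}$. Writing $M = \{d \in K \mid d\prod_{i\ge 2}(y-\beta_i)\in A\}$ (a nonzero fractional ideal, since $t^N\prod_{i\ge2}(y-\beta_i)\in K^+[y]\subseteq A$ for $N \gg 0$), the second set equals $\{c \mid c_n/c \in M\}$; matching the ``boundary'' valuations of the two pieces of the partition forces $\nu(c_n) = \inf\nu(L_1) + \inf\nu(M)$. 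Since every $d \in M$ is the leading coefficient of a $y$-degree-$(n-1)$ element of $A$, we have $M \subseteq L_{n-1} = L_1^{n-1}$, hence $\inf\nu(M) \ge (n-1)\inf\nu(L_1)$ and $\nu(c_n) \ge n\inf\nu(L_1)$.

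The step I expect to be the main obstacle is finishing this last case: because the value group $\QQ$ is dense, the fractional ideals $L_1$, $M$, $L_1^n$ need not be principal, so from $\nu(c_n) \ge n\inf\nu(L_1)$ one cannot immediately conclude $c_n \in L_1^n$. One has to check that the equality $\nu(c_n) = n\inf\nu(L_1)$ is compatible with the above partition of $K^\ast$ only when $L_1$ is of the closed form $\{\nu \ge \inf\nu(L_1)\}$ — the open alternative would force $M$, hence $L_1^{n-1}$, to contain an element of valuation exactly $(n-1)\inf\nu(L_1)$, which is absurd when $L_1$ is open — and in that closed case $L_1^n = \{\nu \ge n\inf\nu(L_1)\}$ indeed contains $c_n$; if the inequality is strict then $c_n \in L_1^n$ regardless. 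Once $L_n = L_1^n$ is in hand, everything else is routine bookkeeping with $y$-degrees and with fractional ideals of the valuation ring $K^+$.
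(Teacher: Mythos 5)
Your proof is correct in substance, but it follows a genuinely different and considerably heavier route than the paper's. The paper exploits the fact that $t^q\in K$ for every $q\in\QQ$ (so every nonzero element of $K$ is a unit of $K^+$ times a power of $t$): after normalizing the leading coefficient of $a\in A$ to $t^{-s}$ it writes $a=\prod_{i=1}^n (y-\alpha_i)/t^{s/n}$, distributing the leading coefficient \emph{evenly} among the $n$ linear factors, and a single application of (iterated) $P_2$ puts some $(y-\alpha_i)/t^{s/n}$ into $A_1$; subtracting its $n$-th power, which has the same top term $y^n/t^s$, drops the degree and the induction closes in a few lines. You instead keep the leading coefficient abstract and are forced to prove the structural statement $L_n=L_1^n$ about the $\nu$-saturated modules of leading coefficients, which requires the cut/partition analysis of $K^\ast$ in your Case B. That analysis does work once written out carefully (the degenerate cases $L_1=K$ and $M=K$ are harmless, and the open/closed dichotomy at the boundary is resolved exactly as you sketch), so nothing is fatally missing; what the paper's normalization buys is precisely the avoidance of all of this. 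Three spots in your write-up need patching. First, the partition of $K^\ast$ is into $P=\{c: c(y-\beta_1)\in A\}$ and $Q$, not into $L_1\cap K^\ast$ and $Q$: one only has $P\subseteq L_1$, so the cut argument yields $\nu(c_n)=\inf\nu(P)+\inf\nu(M)\ge\inf\nu(L_1)+\inf\nu(M)$ rather than an equality; your boundary-case argument survives because there $\inf\nu(P)=\inf\nu(L_1)$ is forced and $P$ inherits openness from $L_1$, which is what pushes the closedness onto $M$. Second, the claim that the leading coefficients of degree-$n$ elements of $B$ are \emph{exactly} $L_1^n$ is not justified as stated (monomials of length $>n$ can contribute to the $y^n$-coefficient); only the inclusion of $L_1^n$ into the set of realized leading coefficients is used, and that direction is fine. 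Third, ``$K[y]=K^+[y]+K$'' is false as an identity of sets, though the intended conclusion $K\subseteq A\Rightarrow A=K[y]$ is immediate since $A$ is a ring containing $K$ and $y$.
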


\begin{proof}
	Let $a \in A$.
	After multiplying $a$ with a unit of $K^+$, we can assume that
	\[
		a = \frac{y^n}{t^s} + \textrm{lower degree terms in $y$} \, ,
	\]
	where $s \in \QQ$ and $n \geq 0$ is an integer. We have to show that
	$a \in K^+[A_1]$.
	We proceed by induction on $n$. If $n = 0$, then $a \in A \cap K = K^+$.
	So let us assume $n > 0$.
	As $K$ is algebraically closed
	and contains $t$, there exist 
	$\alpha_1, \dots, \alpha_n \in K$ with
	\[
		a = \left(\frac{y-\alpha_1}{t^{s/n}}\right) 
		      \left(\frac{y-\alpha_2}{t^{s/n}}\right) \cdots
		      \left(\frac{y-\alpha_n}{t^{s/n}}\right) \, .
	\]
	Since $A \subseteq K[y]$ satisfies the property $P_2$, we have
	$(y - \alpha_i) / (t^{s/n}) \in A$ for some $i$. This implies that 
	\[
		\frac{(y - \alpha_i)^n}{t^s} \in 
		K^+[A_1] \, .
	\]
	Thus $q = a - (y - \alpha_i)^n / t^s \in A$. By induction hypothesis we have
	$q \in K^+[A_1]$ and thus $a \in K^+[A_1]$. Hence $A \subseteq K^+[A_1]$,
	which implies the result.
\end{proof}

\begin{lemma}
	\label{lem:P2}
	Let $K^+[y] \subseteq E \subsetneq K[y]$ be a proper subring. 
	Then there exists a
	proper subring $E' \subsetneq K[y]$ that satisfies the property $P_2$ 
	and contains $E$.
\end{lemma}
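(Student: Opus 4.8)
The plan is to show that any proper intermediate ring $E$ with $K^+[y] \subseteq E \subsetneq K[y]$ can be enlarged to one that satisfies $P_2$, and the natural candidate for $E'$ is the set of "quotients by a unit that land in $E$", or more precisely a suitable closure of $E$ under taking $P_2$-type divisors. The key structural fact to exploit is Proposition~\ref{prop:rep}: once a ring has $P_2$ it is generated over $K^+$ by its degree-one part, so the relevant data is really a set of degree-one polynomials $(y-\alpha)/t^s$. First I would analyze which linear polynomials $(y-\alpha)/t^s$ lie in $E$; writing $\ell_{\alpha, s} = (y-\alpha)/t^s$, the set $S = \{(\alpha, s) : \ell_{\alpha,s} \in E\}$ has a combinatorial structure (for fixed $\alpha$ it is closed downward in $s$ among those $s$ with $s \le $ some bound depending on $\alpha$, since multiplying by $t^{s'-s} \in K^+$ stays in $E$). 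Because $E \ne K[y]$, we cannot have $\ell_{\alpha, s} \in E$ for all $s$; that would force $y \in E$ after clearing, hence $K[y] = E$.

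The core of the argument is to define $E'$ and verify $P_2$. I would set
\[
	E' = K^+\bigl[\, \ell_{\alpha, s} \ \big|\ \ell_{\alpha, s} \in E \,\bigr] + \textrm{(higher-degree elements forced by these)},
\]
or more cleanly, take $E'$ to be the intersection of all $P_2$-subrings of $K[y]$ containing $E$ — but to make that intersection nonempty and proper I must first exhibit at least one proper $P_2$-ring containing $E$. So concretely I would pick a single linear polynomial $\ell = \ell_{\alpha_0, s_0} \notin E$ (which exists since $E \ne K[y]$, using that $E$ contains no $\ell_{\alpha_0, s}$ for $s$ large), and let $E'$ be generated over $K^+$ by all $\ell_{\alpha, s} \in E$ together with $K^+$. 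The claim is that this $E'$ still omits $\ell_{\alpha_0, s_0}$ and satisfies $P_2$. Verifying $P_2$ means: if $p q \in E'$ with $p, q \in K[y]$, then $p \in E'$ or $q \in E'$. Since $E'$ is generated by linear factors over $K^+$, I would factor $p$ and $q$ into linear pieces over the algebraically closed field $K$, normalize each as $u_i \ell_{\beta_i, r_i}$ with $u_i \in K$, and track the valuation bookkeeping: the product lies in $E'$ iff (after regrouping units into $K^+$) every linear factor $\ell_{\beta_i, r_i}$ appearing lies in $E$ or can be absorbed. The downward-closure of $S$ in the $s$-variable is exactly what lets me distribute the factors so that either all of $p$'s factors or all of $q$'s factors individually qualify.

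**Expected main obstacle.** The delicate point is the valuation bookkeeping when splitting a product into linear factors: a priori a factorization $pq$ with neither $p$ nor $q$ in $E'$ could still have $pq \in E'$ because units $u_i \in K \setminus K^+$ get shuffled between factors. I will need to argue that for a product of normalized linear forms to lie in $E'$, the multiset of pairs $(\beta_i, r_i)$ must all individually lie in $S$ (up to the freedom of lowering $r_i$, which is harmless), and this requires knowing that $E'$, as a $K^+$-algebra generated by such linear forms, has no "accidental" lower-degree or lower-valuation cancellations — essentially a statement that $E'$ is a "monomial-like" subring in the basis of products of the $\ell_{\alpha,s}$'s. Establishing this normal-form/uniqueness statement for elements of $E'$, and checking it is compatible with the downward-closed set $S$, is where the real work lies; once that is in hand, $P_2$ for $E'$ and the properness $\ell_{\alpha_0,s_0} \notin E'$ follow by comparing which linear forms divide a given element.
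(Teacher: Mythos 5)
Your construction has a genuine gap: the ring $E'$ you propose need not contain $E$, which is the whole point of the lemma. You build $E'$ from the degree-one elements of $E$, but a general proper subring $K^+[y] \subseteq E \subsetneq K[y]$ is not generated over $K^+$ by its degree-one part --- Proposition~\ref{prop:rep} guarantees this only \emph{after} $P_2$ is known, which is exactly what you are trying to arrange. Concretely, take $E = K^+[y, y^2/t]$. This is a proper subring containing $K^+[y]$ (its degree-zero part is $K^+$, so $1/t \notin E$), yet its degree-one part is just $K^+ + K^+y$, so your $E'$ is $K^+[y]$, which does not contain $y^2/t$. Any $P_2$-overring of this $E$ must contain a root such as $y/t^{1/2}$ (since $(y/t^{1/2})^2 \in E$), i.e.\ you are forced to \emph{add new} linear forms obtained by factoring the higher-degree elements of $E$ --- elements that are integral over $E$ but not in $E$. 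Once those are added, your ``downward-closed set $S$'' is no longer the set of linear forms of $E$, the properness of the enlarged ring is no longer clear, and the normal-form statement you flag as the ``main obstacle'' (no accidental cancellations among products of the $\ell_{\alpha,s}$) is precisely the hard content you have not supplied. Note also that the lemma does not assume the cutoff property, so the combinatorial control you are implicitly relying on is not available here.

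The paper's proof takes a completely different and much shorter route that handles exactly this phenomenon: it passes to the integral closure $\tilde{E}$ of $E$ in $K[y]$ (which automatically absorbs elements like $y/t^{1/2}$), shows $\tilde{E} \neq K[y]$ because $\operatorname{Spec}(K[y]) \to \operatorname{Spec}(E)$ is not surjective ($tE$ is a proper ideal) while $\operatorname{Spec}(\tilde{E}) \to \operatorname{Spec}(E)$ is surjective, and then invokes Samuel's Th\'eor\`eme~8, which produces a proper $P_2$-overring of any proper integrally closed subring. If you want to salvage a constructive argument along your lines, you would need to first close $E$ under extraction of linear factors (equivalently, work with its integral closure) and then prove properness and $P_2$ for the result --- at which point you have essentially reproved Samuel's theorem in this special case.
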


\begin{proof}
	Denote by $\tilde{E} \subseteq K[y]$ the integral closure
	of $E$ in $K[y]$. As $E \neq K[y]$, it follows that 
	$tE$ is a proper ideal of $E$. In particular,
	$\varphi \colon \Spec(K[y]) \to \Spec(E)$ is 
	nonsurjective. Since $\Spec(\tilde{E}) \to \Spec(E)$ is surjective
	(see \cite[Theorem~9.3]{Ma1986Commutative-ring-t}), it follows that
	$\tilde{E} \neq K[y]$. Hence there exists an intermediate ring
	$\tilde{E} \subseteq E' \subsetneq K[y]$ that satisfies the property $P_2$
	in $K[y]$, by \cite[Th\'eor\`eme~8]{Sa1957La-notion-de-place}.
\end{proof}

Now, we give an application of these two results to maximal subrings.
Roughly speaking, the proposition says, that for rings which are generated
by degree one elements, one can see the maximality already 
on the level of degree one elements.

\begin{proposition}
	\label{prop:red}
	Let $K^+[y] \subseteq A \subsetneq K[y]$ be a proper subring 
	that satisfies $A = K^+[A_1]$. Then $A$ is maximal in $K[y]$ if and only if
	\begin{equation}
		\label{eq:DegOne}
		\textrm{for all $f \in K[y] \setminus A$ of degree $1$ we have 
		$A[f] = K[y]$} \, .
	\end{equation}
\end{proposition}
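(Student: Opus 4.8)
The plan is to prove both implications, using the hypothesis $A = K^+[A_1]$ to reduce questions about arbitrary elements of $K[y]$ to questions about degree-one elements. The forward direction is essentially trivial: if $A$ is maximal in $K[y]$ and $f \in K[y] \setminus A$ has degree $1$, then the intermediate ring $A[f]$ strictly contains $A$, so by maximality $A[f] = K[y]$.

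For the converse, assume \eqref{eq:DegOne} and suppose towards a contradiction that $A$ is not maximal, so there is a proper intermediate ring $A \subsetneq E \subsetneq K[y]$. First I would upgrade $E$ to a ring with the property $P_2$: since $K^+[y] \subseteq A \subseteq E \subsetneq K[y]$, Lemma~\ref{lem:P2} gives a proper subring $E' \subsetneq K[y]$ with the property $P_2$ that contains $E$, hence contains $A$. Now Proposition~\ref{prop:rep} applies to $E'$, yielding $E' = K^+[E'_1]$. Since $A \subsetneq E'$ and both are generated over $K^+$ by their degree-one parts, we must have $A_1 \subsetneq E'_1$ (if $A_1 = E'_1$ then $A = K^+[A_1] = K^+[E'_1] = E'$, contradiction). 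Pick $f \in E'_1 \setminus A_1 \subseteq E' \setminus A$; this $f$ has degree exactly $1$ and lies in $K[y] \setminus A$. By \eqref{eq:DegOne}, $A[f] = K[y]$. But $f \in E'$ and $A \subseteq E'$, so $K[y] = A[f] \subseteq E' \subsetneq K[y]$, a contradiction. Hence $A$ is maximal.

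The main subtlety to get right is the step extracting a degree-one element of $E' \setminus A$: one needs that $A \subsetneq E'$ together with $A = K^+[A_1]$ and $E' = K^+[E'_1]$ forces the degree-one parts to differ, and that any element of $E'_1 \setminus A_1$ genuinely has $y$-degree $1$ (this is immediate from the definition of the subscript-$1$ notation, but worth stating). Everything else is a short chain of inclusions, so I do not expect any computational obstacle; the only real ingredients are Lemma~\ref{lem:P2} and Proposition~\ref{prop:rep}, both already available.
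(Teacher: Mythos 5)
Your proof is correct and follows essentially the same route as the paper: the forward direction is immediate, and the converse uses Lemma~\ref{lem:P2} to enlarge an intermediate ring to a $P_2$-subring $E'$, applies Proposition~\ref{prop:rep} to see $E'$ is generated by its degree-one part, and then uses \eqref{eq:DegOne} on an element of $E'_1 \setminus A_1$ to force a contradiction. The only cosmetic difference is that you argue by contradiction from a strict intermediate ring, while the paper shows directly that $A_1 = (E')_1$ and hence $A = E' \supseteq E$.
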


\begin{proof}	
	Assume that $A$ satisfies \eqref{eq:DegOne}.
	Let $A \subseteq E \subsetneq K[y]$ be an intermediate ring.
	We want to prove $A = E$. By Lemma~\ref{lem:P2}, there
	exists a proper subring 
	$E' \subsetneq K[y]$ that satisfies the property $P_2$ and contains $E$.
	Now, if there would exist $f \in (E')_1 \setminus A_1$, then
	we would have by \eqref{eq:DegOne}
	\[
		K[y] = A[f] \subseteq E' \subseteq K[y] \, .
	\]
	This would imply that $E' = K[y]$, a contradiction. Thus we have
	$A_1 = (E')_1$. According to Proposition~\ref{prop:rep} we have 
	$A = E'$ and therefore $A = E$.
	
	The other implication is clear.
\end{proof}

\begin{definition}
	Let $S = \{ \, s_1 < s_2 < \ldots \, \}$ be a strictly monotone sequence in
	$\QQ_{\geq 0}$ and let 
	$\Lambda = \{ \, \alpha_1, \alpha_2, \ldots \, \}$ be a sequence in 
	$K$ such that $\supp(\alpha_i) \subseteq [0, s_i)$ and
	$\supp(\alpha_{i+1}- \alpha_{i}) \subseteq [s_i, s_{i+1})$ for all $i > 0$.
	We call then $(S, \Lambda)$ an \emph{admissible pair of $K$}.
	If $\alpha$ is an element of $K$ such that 
	$\supp(\alpha - \alpha_i) \subseteq [s_i, \infty)$ for all $i > 0$, then we
	call $\alpha$ a \emph{limit} of the admissible pair $(S, \Lambda)$.
\end{definition}

%\begin{remark}
%	The sequence $\alpha_1, \alpha_2, \ldots$
%	of an admissible pair $(S, A)$ is pseudo-convergent, with respect to $\nu$.
%\end{remark}

\begin{lemma}
	\label{lem:Limit}
	Let $(S, \Lambda)$ be an admissible pair of $\kk[[t^\QQ]]$. 
	Then there exists a limit
	in $\kk[[t^\QQ]]$.
	Moreover, if $\lim s_i = \infty$, then $\alpha$ is unique.
\end{lemma}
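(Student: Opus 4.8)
The plan is to construct a limit $\alpha \in \kk[[t^\QQ]]$ directly as a formal Hahn series, by prescribing its coefficients, and then to check that the resulting support is well ordered and that $\alpha$ has the required approximation property. Concretely, for each index $i$ the admissible-pair condition $\supp(\alpha_{i+1}-\alpha_i) \subseteq [s_i, s_{i+1})$ tells us that the coefficients of $\alpha_i$ in degrees $< s_i$ are already stabilized and do not change when we pass to $\alpha_{i+1}$. So the natural candidate is the series $\alpha$ whose coefficient in degree $s$ (for $s < \sup s_i$) equals the coefficient of $\alpha_i$ in degree $s$ for any $i$ with $s_i > s$ (this is well defined by the stabilization just noted), and whose coefficient in degree $s$ is $0$ for $s \geq \sup s_i$ in the case $\lim s_i < \infty$. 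I would first spell out this definition carefully and verify it is consistent.

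Next I would verify that $\supp(\alpha)$ is well ordered. The key observation is that $\supp(\alpha) \subseteq \bigcup_i \supp(\alpha_i) \cup \{\text{possibly nothing else}\}$; more precisely, by construction every element of $\supp(\alpha)$ lies in $[0, s_j)$ for some $j$ and agrees there with $\supp(\alpha_j)$, so $\supp(\alpha) \cap [0, s_j) = \supp(\alpha_j) \cap [0, s_j)$, which is well ordered since $\supp(\alpha_j)$ is. A subset of $\QQ_{\geq 0}$ all of whose initial segments $[0, s_j)$-truncations are well ordered, with the $s_j$ either increasing to $\infty$ or to a finite sup, is itself well ordered: any nonempty subset $T$ meets some $[0, s_j)$, and the minimum of $T \cap [0, s_j)$ is the minimum of $T$. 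This is the routine but essential point; I would state it as the crux of the well-orderedness check. Then $\alpha \in \kk[[t^\QQ]]$.

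Then I would check the defining property of a limit, namely $\supp(\alpha - \alpha_i) \subseteq [s_i, \infty)$ for all $i$. This is immediate from the construction: in every degree $s < s_i$ the coefficient of $\alpha$ was defined to agree with that of $\alpha_i$ (since for such $s$ we may use the index $i$ itself, as $s_i > s$), so $\alpha - \alpha_i$ has zero coefficient in all degrees $< s_i$, hence $\supp(\alpha - \alpha_i) \subseteq [s_i, \infty)$, as desired. I do not expect any obstacle here.

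For the uniqueness statement, suppose $\lim s_i = \infty$ and let $\beta$ be another limit. Then for every $i$ we have $\supp(\alpha - \alpha_i) \subseteq [s_i, \infty)$ and $\supp(\beta - \alpha_i) \subseteq [s_i, \infty)$, hence $\supp(\alpha - \beta) \subseteq [s_i, \infty)$. Since this holds for all $i$ and $s_i \to \infty$, the support of $\alpha - \beta$ is empty, so $\alpha = \beta$. The only mildly delicate point in the whole argument is the well-orderedness verification, and even that reduces to the elementary fact about initial segments stated above; so I anticipate no serious obstacle, and the proof should be short.
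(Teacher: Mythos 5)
Your construction is exactly the one in the paper: define $\alpha$ coefficientwise using the stabilization $\supp(\alpha_{j}-\alpha_i)\subseteq[s_i,s_j)$, check that $\supp(\alpha)\cap[0,s_i)=\supp(\alpha_i)$ forces well-orderedness, read off $\supp(\alpha-\alpha_i)\subseteq[s_i,\infty)$, and get uniqueness from $s_i\to\infty$. The argument is correct; you merely spell out the well-orderedness and well-definedness checks that the paper leaves as ``one can easily check.''
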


\begin{proof}
	Let $\alpha_i = \sum a_{is} t^s$.
	Now, we define
	$\alpha = \sum a_s t^s$, where
	$a_s = a_{is}$ for some $i$ with $s_i > s$.
	One can easily check, that $a_s$ is well defined. Moreover,
	\[
		\supp(\alpha) = \bigcup_{i = 1}^\infty \supp(\alpha_i) 
		\quad \textrm{and} \quad
		\supp(\alpha) \cap [0, s_i)  = \supp(\alpha_i) \ 
		\textrm{for $i  = 1, 2, \ldots$}
	\]
	and thus $\supp(\alpha)$ is well ordered. It follows that 
	$\alpha \in \kk[[t^\QQ]]^+$
	and that $\supp(\alpha - \alpha_i) \subseteq [s_i, \infty)$ for all $i > 0$.
	The uniqueness statement is clear.
\end{proof}

\begin{definition}
	\label{cutoff.def}
	The subfield $K�\subseteq \kk[[t^\QQ]]$ satisfies the \emph{cutoff property},
	if for all $\alpha = \sum_s a_s t^s \in K$ and for all 
	$u \in \QQ$ we have $\sum_{s \ge u} a_s t^s \in K$.
\end{definition}

\begin{example}
	\label{exp:cutoff}
	An important example of an algebraically closed field 
	inside $\kk[[t^\QQ]]$ that contains $\kk(t)$ and satisfies the cutoff
	property is the Puiseux field
	\[
		\bigcup_{n = 1}^\infty \kk((t^{1/n})) \, .
	\]
	Clearly, the Hahn field $\kk[[t^\QQ]]$ itself is an example.
	Another example is the algebraic closure of $\kk(t)$ (inside the Puiseux field). 
	This follows from the fact that $\sum_{i = -m}^m a_i t^{i/n}$ is algebraic
	over $\kk(t)$ where $a_i \in \kk$ and $n, m \in \NN$ (it is the sum of algebraic
	elements).
\end{example}

In the next proposition we classify all $P_2$-subrings of $K[y]$ under the additional
assumption, that $K$ satisfies the cutoff property.

\begin{proposition}
	\label{prop:P_2}
	Assume that $K$ satisfies the cutoff property. Let
	$K^+[y] \subseteq E \subsetneq K[y]$. Then
	$E$ satisfies the property $P_2$ in $K[y]$ if and only if
	\[
		E = K^+\left[ \left\{ \,  \frac{y-\alpha_i}{t^{s_i}} \ \big| \
		 i = 1, 2, \ldots \, \right\} \right]
	\]
	for an admissible pair $(S, \Lambda)$ of $K$ or $E = K^+[e]$ for some 
	element $e \in K[y]$ of degree $1$.
\end{proposition}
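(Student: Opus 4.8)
The ``only if'' direction is the substantial one; the ``if'' direction amounts to checking directly that each of the listed rings satisfies $P_2$, which I address at the end. So assume $K^+[y]\subseteq E\subsetneq K[y]$ satisfies the property $P_2$ in $K[y]$. By Proposition~\ref{prop:rep} we have $E = K^+[E_1]$, so everything is controlled by the set $E_1$ of degree-one elements. First I would understand $E_1$ concretely: an element $f\in K[y]$ of degree one is, up to a unit of $K^+$, of the form $(y-\alpha)/t^s$ with $s\in\QQ$ and $\alpha\in K$; and by the cutoff property we may replace $\alpha$ by its truncation $\alpha_{<s}=\sum_{u<s}a_u t^u$ without changing the $K^+$-module $K^+f + K^+$, hence without changing membership in $E$. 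So to each $f\in E_1$ we attach the pair $(s,\alpha_{<s})$ with $\supp(\alpha_{<s})\subseteq[0,s)$, and membership in $E$ depends only on such normalized pairs. Call the set of $s$ that occur the \emph{level set} of $E$.

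The key algebraic input is how $P_2$ forces relations between two such generators. If $(y-\alpha)/t^s$ and $(y-\beta)/t^{s'}$ both lie in $E$ with $s\le s'$, consider their difference $(\beta-\alpha)/t^s$ plus the corrected term, or more precisely look at $(y-\alpha)/t^{s'} - (y-\beta)/t^{s'} = (\beta-\alpha)/t^{s'}$: I would argue that $(y-\alpha)/t^s\in E$ forces $(y-\alpha)/t^{s'}\in E$ only if $\nu(\text{stuff})$ is compatible, and conversely that having two generators at incomparable ``branch points'' would produce, via $P_2$, an element like $1/t^{\epsilon}\cdot(\text{unit})$, i.e. $t^{-\epsilon}\in E$ with $\epsilon>0$, contradicting $E\subsetneq K[y]$ (since $t^{-\epsilon}\in E$ would give $K^+[t^{-\epsilon},\dots]$ eventually all of $K$, hence $K[y]$). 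Concretely, if $(y-\alpha)/t^{s}\in E$ and $(y-\beta)/t^{s}\in E$ with $\alpha\ne\beta$ then $(\alpha-\beta)/t^s\in E\cap K=K^+$, so $\nu(\alpha-\beta)\ge s$; i.e. $\alpha_{<s}=\beta_{<s}$. This already shows: \emph{for each level $s$ in the level set, the normalized $\alpha_{<s}$ is unique}. Next, for two levels $s<s'$ occurring in $E$, with normalized elements $\alpha,\alpha'$, I would show $\alpha'_{<s}=\alpha$: write $(y-\alpha)/t^{s} = t^{s'-s}\cdot (y-\alpha)/t^{s'}$ and compare with $(y-\alpha')/t^{s'}$; their difference is $(\alpha'-\alpha)/t^{s'}\in K^+$ is false in general, but $(y-\alpha)/t^{s}-t^{s'-s}(y-\alpha')/t^{s'} = (\alpha'-\alpha)/t^{s}\in E\cap K = K^+$, giving $\nu(\alpha'-\alpha)\ge s$. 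Hence the normalized elements at the various levels are \emph{coherent}: they form exactly an admissible pair if the level set is order-isomorphic to an initial segment of $\NN$ with the right support conditions, which is precisely the content of the definition of admissible pair once one also checks $\supp(\alpha_{i+1}-\alpha_i)\subseteq[s_i,s_{i+1})$ --- the lower bound is coherence just shown, the upper bound is the truncation normalization.

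It remains to handle the \emph{structure of the level set} itself. The level set $T\subseteq\QQ_{\ge0}$ is bounded below by $0$ and, I claim, well ordered: if it contained a strictly decreasing sequence $s^{(1)}>s^{(2)}>\cdots\to s_\infty$, then combining the corresponding generators via $P_2$ would again manufacture $t^{-\epsilon}\in E$, a contradiction. So $T$ is order-isomorphic to an ordinal; if $T$ has order type $>\omega$ or is infinite with a largest element that is a limit, similar arguments rule it out, leaving two cases: either $T$ is finite --- and then the largest level gives a single generator, so $E=K^+[e]$ with $\deg_y e=1$ (the smaller generators being recoverable from the top one together with powers of $t$, since coherence makes them redundant); or $T=\{s_1<s_2<\cdots\}$ is an infinite strictly increasing sequence, and then $E=K^+[\{(y-\alpha_i)/t^{s_i}\}]$ for the admissible pair $(S,\Lambda)$ just constructed. (One must double-check that no further degree-one elements sneak into $E$: any $(y-\beta)/t^{s}\in E$ has $s\in T$ by definition of $T$, and then $\beta_{<s}=\alpha_{<s}$ by uniqueness at level $s$, so it is already in the displayed ring.)

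\textbf{The ``if'' direction.} Suppose $E=K^+[\{(y-\alpha_i)/t^{s_i}\}]$ for an admissible pair. Given $P,Q\in K[y]$ with $PQ\in E$, I would pass to $\nu$-valuations of leading $y$-coefficients: writing a general element of $K[y]$ in the form $c\,y^m+\cdots$ and noting that $E$ consists exactly of the $K^+$-linear combinations of products $\prod (y-\alpha_{i_j})/t^{s_{i_j}}$, one characterizes membership by a single inequality of Newton-polygon type on the coefficients of $P$ relative to the sequence $\alpha_i$; this inequality is ``valuative'' in the sense that if it fails for both $P$ and $Q$ it fails for $PQ$. For $E=K^+[e]$ with $e=(y-\alpha)/t^s$ the same argument is the classical fact that $K^+[e]$ (a polynomial ring over a valuation ring in one variable $e$) satisfies $P_2$ in $K[e]=K[y]$, which one checks by looking at the lowest-valuation coefficient of $P$ and $Q$ as polynomials in $e$.

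\textbf{Main obstacle.} The delicate point is the dichotomy on the level set $T$: proving that $T$ cannot have a decreasing subsequence, cannot have order type exceeding $\omega$, and cannot be infinite-with-supremum-realized. Each of these is ruled out by the same mechanism --- two generators at ``too close'' or ``wrongly ordered'' levels combine under $P_2$ to produce a negative power of $t$ in $E$, forcing $E=K[y]$ --- but making this combination precise (choosing the right $\kk$-linear combination of the two generators so that the $y$-linear part cancels and a genuine unit times $t^{-\epsilon}$ survives) is where the real work lies, and where the cutoff property is used to perform the truncations that make the cancellation clean.
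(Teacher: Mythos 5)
Your reduction to degree-one elements via Proposition~\ref{prop:rep}, the normalization of generators to the form $(y-\alpha)/t^s$ with $\supp(\alpha)\subseteq[0,s)$ using the cutoff property, and the uniqueness and coherence of the normalized $\alpha$ at each level are all correct and match the paper's argument (its set $N$ and its properties i) and ii)). The genuine gap is in your analysis of the level set $T$. Your claim that $T$ is well ordered, and the ensuing dichotomy ``$T$ finite or $T$ an increasing sequence of order type $\omega$,'' are false: if $(y-\alpha)/t^s\in E$ and $0<s'<s$, then $t^{s-s'}\cdot(y-\alpha)/t^s=(y-\alpha)/t^{s'}\in E$, and truncating $\alpha$ below $s'$ (cutoff property, with the discarded tail landing in $K^+$) shows $s'\in T$. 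Hence $T$ is downward closed in $\QQ_{>0}$, i.e.\ a dense interval whenever nonempty --- never well ordered, never finite and nonempty, never order-isomorphic to $\NN$. Consequently the mechanism you propose for ruling out ``bad'' level sets (two generators at decreasing or too-close levels combining under $P_2$ to manufacture $t^{-\epsilon}\in E$) cannot exist: generators at any two levels of $T$ always coexist coherently. The step you yourself flag as ``where the real work lies'' is an attempt to prove a false statement, so the proof cannot be completed along this route.

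The correct dichotomy, which is how the paper proceeds, is simply whether $\sup(T)$ is attained. If $u=\sup(T)\in T$, then coherence absorbs every lower-level generator into $K^+[(y-\alpha_u)/t^u]$, namely $(y-\alpha_s)/t^s=t^{u-s}(y-\alpha_u)/t^u+(\alpha_u-\alpha_s)/t^s$ with the last term in $K^+$ because $\supp(\alpha_u-\alpha_s)\subseteq[s,u)$; hence $E=K^+[e]$. If $\sup(T)\notin T$, one chooses an arbitrary countable strictly increasing sequence in $T$ cofinal in $T$; coherence again absorbs all other generators into it, and this sequence together with its $\alpha$'s is an admissible pair generating $E$. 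Note that the admissible pair is not canonically attached to $E$ --- any cofinal sequence works. Your ``if'' direction is also thinner than it should be: for $E=K^+[e]$ the Gauss extension of $\nu$ to $K[y]$ (after normalizing $e=y$ by an automorphism) does the job as you indicate, but for an admissible pair the clean argument is that $E$ is the union of the increasing chain of $P_2$-subrings $K^+[(y-\alpha_i)/t^{s_i}]$, rather than a Newton-polygon membership criterion, which you do not actually establish.
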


\begin{proof}
	Assume that $K^+[y] \subseteq E \subsetneq K[y]$ is a subring that satisfies
	the property $P_2$ in $K[y]$. We consider the following subset of $E_1$:
	\[
		N = 
		\left \{ \, \frac{y-\alpha}{t^s} \in E \ | \ 
			     \textrm{$s \in \QQ_{>0}$, $\alpha \in K$ 
			     and $\supp(\alpha) \subseteq [0, s)$} \, \right \} \, .
	\]
	Using the fact, that $E \cap K = K^+$ and that $K$ satisfies the cutoff property,
	one can see that $N$ has the following two properties:
	\begin{enumerate}[i)]
		\item If $(y-\alpha) / t^s, (y-\alpha') / t^{s'} \in N$
		and $s \le s'$, then $\supp(\alpha' - \alpha) \subseteq [s, s')$.
		\item If $(y-\alpha)/ t^s \in E$, $s \in \QQ_{>0}$ 
		and $\alpha \in K$, then $\alpha \in K^+$ and there exists $n \in N$,
		such that $((y-\alpha)/t^s) - n \in K^+$.
	\end{enumerate}
	Property ii) of $N$ implies
	\begin{equation}
		\label{eq:equality}
		K^+[N, y] = K^+[E_1] \, .
	\end{equation}
	
	Let $U \subseteq \QQ_{>0}$ be the set of all $s \in \QQ_{> 0}$ such that
	there exists $\alpha \in K^+$ with $(y-\alpha)/t^s \in N$.
	Property i) of $N$ implies that 
	for every $s \in U$ there exists a unique
	$\alpha_s \in K^+$ such that $(y-\alpha_s)/t^s \in N$.
	Now, we make the following distinction.
	\begin{enumerate}[\qquad \qquad, \quad]
		\item[$\sup(U) \in U$:] Let $u = \sup(U)$. It follows from property i)
			of $N$, that $(y-\alpha_s)/t^s \in K^+[(y-\alpha_u)/t^u]$ for all
			$s \in U$. This implies $K^+[N, y] \subseteq K^+[(y-\alpha_u)/t^u]$.
			Clearly, we have $K^+[(y-\alpha_u)/ t^u] \subseteq K^+[N, y]$.
			With \eqref{eq:equality} and Proposition~\ref{prop:rep}, we
			get the equality $E = K^+[(y-\alpha_u)/t^u]$.
		\item[$\sup(U) \notin U$:] 
		 	Let $S = \{ \, s_1 < s_2 < \ldots \, \}$ 
			be a sequence in $U$ such that $\lim s_i = \sup(U)$.
			If we set $\alpha_i = \alpha_{s_i}$ and 
			$\Lambda = \{ \, \alpha_1, \alpha_2, \ldots \, \}$, 
			then $(S, \Lambda)$
			is an admissible pair. Let $s \in U$. As $\sup(U) \notin U$,
			there exists $i$ with $s_i > s$. With property i) of $N$,
			we get now $(y-\alpha_s) / t^s \in K^+[(y-\alpha_i)/t^{s_i}]$.
			Thus $K^+[N, y]$ is generated over $K^+$ by $(y-\alpha_i)/t^{s_i}$,
			$i = 1, 2, \ldots$ .
			By \eqref{eq:equality} and Proposition~\ref{prop:rep} we get
			$K^+[N, y] = K^+[E_1] = E$. 
	\end{enumerate}
	Thus $E$ has the claimed form.
	
	\smallskip
	
	Now, we prove that $K^+[e]$ satisfies the property $P_2$ in $K[y]$, 
	provided that $e \in K[y]$ has degree
	$1$. By applying a $K$-algebra automorphism of $K[y]$, we can assume
	that $e = y$. Consider the following extension of the valuation $\nu |_K$
	on $K$ to $K[y]$
	\[
		\mu \colon K[y] \longrightarrow \QQ \, , 
		\quad f_0 + \ldots + f_n y^n \longmapsto 
		\min \{ \nu(f_0), \ldots, \nu(f_n)�\} \, ,
	\]
	which extends (uniquely) to $K(y)$. Then $K^+[y]$ is exactly
	the set of elements in $K[y]$ with $\mu$-valuation $\geq 0$. 
	From this it follows readily that $K^+[y]$ satisfies the property $P_2$ in $K[y]$.
	
	Now, let $(S, \Lambda)$ be an admissible pair of $K$. Then
	\[
		K^+\left[\left\{ \, \frac{y-\alpha_i}{t^{s_i}} \ \big| 
		\ i = 1, 2, \ldots \, \right\}\right]
	\]
	satisfies the property $P_2$ in $K[y]$ as it is the union of the increasing
	$P_2$-subrings
	\[
		K^+\left[\frac{y-\alpha_1}{t^{s_1}}\right]
		\subseteq K^+\left[\frac{y-\alpha_2}{t^{s_2}}\right] \subseteq \cdots
		\, .	
	\]
\end{proof}

With this classification result at hand, we can now achieve a 
classification of all maximal subrings of $K[y]$ that contain $K^+[y]$. 

\begin{proposition}
	\label{prop:class}
	Assume that $K$ satisfies the cutoff property.
	Let $(S, \Lambda)$ be an admissible pair of $K$.
	Assume that either $\lim s_i = \infty$ or $(S, \Lambda)$ has no limit in $K$.
	Then
	\begin{equation}
		\label{eq:MaxGen}
		K^+\left[ \left\{ \,  \frac{y-\alpha_i}{t^{s_i}} \ \big| \
		 i = 1, 2, \ldots \, \right\} \right]
	\end{equation}
	is a maximal subring of $K[y]$ that contains $K^+[y]$. On the other hand,
	every maximal subring of $K[y]$ that contains $K^+[y]$ is of this form.
\end{proposition}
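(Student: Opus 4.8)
The plan is to combine Proposition~\ref{prop:P_2}, which classifies the $P_2$-subrings, with Proposition~\ref{prop:red}, which reduces maximality to a statement about degree-one elements. So the core of the argument is the following degree-one computation: if $(S,\Lambda)$ is an admissible pair with either $\lim s_i = \infty$ or no limit in $K$, and $E$ denotes the ring in \eqref{eq:MaxGen}, then for every $f \in K[y] \setminus E$ of degree $1$ we have $E[f] = K[y]$. Write $f = (y-\beta)/t^r$ after scaling by a unit of $K^+$, with $r \in \QQ$ and $\beta \in K$ (the case $r \le 0$ gives $f \in K^+[y] \subseteq E$, so assume $r > 0$). Adjoining $f$ to $E$ and forming differences $f - (y-\alpha_i)/t^{s_i}$ (when $s_i \le r$) or $(y-\alpha_i)/t^{s_i} - f$ (when $r \le s_i$), one extracts from $E[f]$ elements of $K$; the point is to show that one of these has negative $\nu$-valuation, whence $t^{-1} \in E[f]$ and so $K^+[t^{-1}] = K \subseteq E[f]$, which together with $f$ forces $E[f] = K[y]$.

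The key estimate: if $f \notin E$, then in particular $f$ does not lie in $K^+[(y-\alpha_i)/t^{s_i}]$ for any $i$; combined with property i) of the set $N$ from the proof of Proposition~\ref{prop:P_2} (the ``nesting'' of the supports), this means that for each $i$ with $s_i \le r$ the support of $\alpha_i - \beta$ is \emph{not} contained in $[s_i, \infty)$ — i.e.\ $\nu(\alpha_i - \beta) < s_i$ — for otherwise $f$ would be expressible via $(y-\alpha_i)/t^{s_i}$. Hence $(\alpha_i - \beta)/t^{s_i} \in E[f]$ has negative valuation unless $\alpha_i = \beta$ on $[0, s_i)$ exactly up to the cutoff, and a short bookkeeping using the cutoff property of $K$ and property i) of $N$ shows that if this failed for \emph{every} $i$ then $\beta$ would be a limit of $(S,\Lambda)$ with $\lim s_i < \infty$ — contradicting our hypothesis. (In the subcase $r < \sup(U)$ one instead uses an $s_i > r$ and the relation $(y-\alpha_i)/t^{s_i} - f = (\beta - \alpha_i)/t^{r} \cdot t^{r-s_i}$, again getting an element of $K$ of negative valuation unless $\beta$ agrees with $\alpha_i$ far enough out, which again would exhibit $\beta$ as a limit.) This establishes \eqref{eq:DegOne}, and since $E = K^+[E_1]$ by construction, Proposition~\ref{prop:red} gives that $E$ is maximal in $K[y]$, proving the first assertion.

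For the converse, let $A$ be a maximal subring of $K[y]$ with $K^+[y] \subseteq A \subsetneq K[y]$. By Lemma~\ref{lem:PropertyP_2ForExtMax} — noting that $A \subsetneq K[y]$ is automatically \emph{extending} since $A$ is maximal and $tA$ is a proper ideal of $A$ (because $A \ne K[y]$) so $\Spec(K[y]) \to \Spec(A)$ misses the prime containing $t$ — the ring $A$ satisfies $P_2$ in $K[y]$. Thus Proposition~\ref{prop:P_2} applies: either $A = K^+[e]$ for some degree-one $e$, or $A$ has the form \eqref{eq:MaxGen} for an admissible pair $(S,\Lambda)$. In the first case, after a $K$-automorphism of $K[y]$ we may take $e = y$, but then $A = K^+[y]$ is \emph{not} maximal — e.g.\ $K^+[y] \subsetneq K^+[y/t] \subsetneq K[y]$ — contradicting maximality; alternatively, $K^+[e]$ arises as the limiting case of the sequence-construction only when it is not maximal, so this case does not occur. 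In the second case, it remains to check that the admissible pair satisfies $\lim s_i = \infty$ or has no limit in $K$: if instead $\lim s_i = u < \infty$ and $(S,\Lambda)$ had a limit $\alpha \in K$, then by the cutoff property $\alpha' := \sum_{s \ge u} a_s t^s \in K$ and one verifies $A \subsetneq K^+[(y-(\alpha-\alpha'))/t^{u}] \subsetneq K[y]$ (the middle ring being a proper $P_2$-subring strictly containing all generators $(y-\alpha_i)/t^{s_i}$), again contradicting maximality. Hence $A$ is of the asserted form.

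\textbf{Main obstacle.} The delicate point is the degree-one computation in the first paragraph: tracking exactly when an element $(\beta - \alpha_i)/t^{s_i}$ of $K$ extracted inside $E[f]$ has negative $\nu$-valuation, and organizing the two subcases ($r \le \sup(U)$ vs.\ $r > \sup(U)$, and whether $\sup(U) \in U$) so that in every case either we get a negative-valuation element of $K$ — whence $K \subseteq E[f]$ and $E[f] = K[y]$ — or else $\beta$ is forced to be a limit of $(S,\Lambda)$ with $\lim s_i$ finite, contradicting the hypothesis. Everything else is a matter of invoking Propositions~\ref{prop:rep}, \ref{prop:red}, \ref{prop:P_2} and Lemma~\ref{lem:PropertyP_2ForExtMax}.
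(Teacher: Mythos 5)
Your strategy coincides with the paper's: verify $E=K^{+}[E_1]$, reduce maximality to the degree-one condition \eqref{eq:DegOne} via Proposition~\ref{prop:red}, and for the converse combine Lemma~\ref{lem:PropertyP_2ForExtMax} with Proposition~\ref{prop:P_2}, discard $K^{+}[e]$, and exclude a finite limit by exhibiting a proper overring. The converse half is essentially the paper's argument and is fine. The problem is the ``key estimate'' in the forward direction, which you yourself flag as the main obstacle: as written it is incorrect in two places. The claim that for \emph{each} $i$ with $s_i\le r$ one has $\nu(\alpha_i-\beta)<s_i$ is false (take $\beta=\alpha_1$ and $r$ large, so that $\alpha_1-\beta=0$), and its justification is backwards: $\nu(\alpha_i-\beta)\ge s_i$ together with $s_i\le r$ does \emph{not} force $f=(y-\beta)/t^{r}$ into $K^{+}[(y-\alpha_i)/t^{s_i}]$, since passing from $(y-\beta)/t^{s_i}$ to $(y-\beta)/t^{r}$ requires multiplication by $t^{s_i-r}$, which has negative valuation. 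Likewise your identity $(y-\alpha_i)/t^{s_i}-f=(\beta-\alpha_i)/t^{r}\cdot t^{r-s_i}$ cannot hold: the $y$-terms do not cancel unless one of the two fractions is first rescaled.

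What is actually needed is the \emph{existential} statement, and the correct source of the good index $i$ is the hypothesis on $(S,\Lambda)$, not the assumption $f\notin E$. If $r<\lim s_i$, pick $i$ with $s_i>r$; then $(\alpha_i-\beta)/t^{r}=f-t^{s_i-r}\cdot\frac{y-\alpha_i}{t^{s_i}}$ lies in $E[f]\cap K$ and cannot lie in $K^{+}$ (else $f\in E$), so it has negative valuation and $E[f]\supseteq K$, whence $E[f]=K[y]$. If $r\ge\lim s_i$, then $\lim s_i$ is finite, so by hypothesis $(S,\Lambda)$ has no limit in $K$; since $\beta\in K$ is therefore not a limit, there is \emph{some} $i$ with $\nu(\beta-\alpha_i)<s_i$, and then $(\alpha_i-\beta)/t^{s_i}=t^{r-s_i}f-\frac{y-\alpha_i}{t^{s_i}}\in E[f]$ has negative valuation. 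Finally, you never check that $E\subsetneq K[y]$, which Proposition~\ref{prop:red} requires; the paper does this by observing that $E=K[y]$ would force $1/t\in K^{+}[(y-\alpha_i)/t^{s_i}]$ for some $i$ and hence $1/t\in K^{+}$, a contradiction. With these repairs your proof becomes the paper's.
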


\begin{proof}
	Let $B \subseteq K[y]$ be the ring of \eqref{eq:MaxGen}. 
	We claim that $B \neq K[y]$. Otherwise, there exists $i$ such that
	$1/t \in K^+[ (y-\alpha_i) / t^{s_i}]$, as
	$(S, \Lambda)$ is an admissible pair. 
	This would imply $1/t \in K^+$, a contradiction.
	
	Note, that we have $B = K^+[B_1]$.
	Thus, according to Proposition~\ref{prop:red} it is enough to show,
	that $B[f] = K[y]$ for all $f \in K[y] \setminus B$ of degree $1$. 
	Up to multiplying $f$
	with a unit of $K^+$, we can assume that $f = (y - \alpha) / t^s$
	for some $\alpha \in K$ and $s \in \QQ$.
	First, assume that $s < \lim s_i$. Hence there exists $i$ with $s < s_i$.
	Thus we have
	\[
		\frac{\alpha_i -\alpha}{t^s} = 
		\frac{y -\alpha}{t^s} - \frac{y-\alpha_i}{t^s} \in K[y] \setminus B \, .
	\]
	So this last element lies in $K \setminus K^+$. Hence we have 
	$B[f] = K[y]$. Now, assume $s \ge \lim s_i$ 
	(and thus $\lim s_i$ is finite). As $(S, \Lambda)$ has no limit in $K$, 
	there exists $i$ such that $\supp(\alpha-\alpha_i)$ is not contained in
	$[s_i, \infty)$. Thus,
	\[
		 \frac{y -\alpha}{t^{s_i}} - \frac{y-\alpha_i}{t^{s_i}}
		 = \frac{\alpha_i -\alpha}{t^{s_i}} \in K \setminus K^+ \, ,
	\]
	and hence we get $B[f] = K[y]$ again.
	
	\smallskip
	
	Now, let $A \subsetneq K[y]$ be a maximal subring that contains $K^+[y]$,
	which must be an extending maximal subring.
	By Lemma~\ref{lem:PropertyP_2ForExtMax},
	$A \subseteq K[y]$ satisfies the property $P_2$.  
	Since $K^+[e] \subseteq K[y]$ is not a maximal subring 
	for all $e \in K[y]$ of degree $1$,
	it follows from Proposition~\ref{prop:P_2} that there exists an admissible pair
	$(S, \Lambda)$, such that 
	\[
		A = K^+\left[ \left\{ \,  \frac{y-\alpha_i}{t^{s_i}} \ \big| \
		 		i = 1, 2, \ldots \, \right\} \right] \, .
	\]
	It remains to prove that $\lim s_i = \infty$ or $(S, \Lambda)$ 
	has no limit in $K$.
	Assume towards a contradiction that $s = \lim s_i < \infty$ and
	$\alpha \in K$ is a limit of $(S, \Lambda)$. Then, it follows that
	$A \subseteq K^+[(y-\alpha)/t^s]$. As $K^+[(y-\alpha)/t^s]$
	is certainly not a maximal subring of $K[y]$, we get
	a contradiction. This finishes the proof.
\end{proof}

\begin{remark}
	\label{rem:class}
	Let $A$ be the maximal subring $\eqref{eq:MaxGen}$ in 
	the Proposition~\ref{prop:class}. We describe the crucial maximal ideal of $A$. 
	Let $\nn \subseteq K^+$ be the unique maximal ideal. In fact,
	$\nn = \sum_{q \in \QQ_{> 0}} t^q K^+$.
	For $i \in \NN$, let $\alpha'_i = \alpha_i + t^{s_i} a_{i+1}$ where
	$a_{i+1} \in \kk$ denotes the coefficient of $t^{s_i}$ in $\alpha_{i+1}$.
	Thus $A$ is generated over $K^+$ by the elements $(y-\alpha'_i)/ t^{s_i}$.
	We have the following inclusion of ideals in $A$
	\[
		\nn + \sum_{i = 0}^\infty \frac{y-\alpha'_i}{t^{s_i}} A
		\subseteq \sum_{q \in \QQ_{>0}} t^q A \, .
	\]
	As every element of $A$ is an element of $\kk \cdot 1$ 
	modulo the left hand ideal
	and the right hand ideal is proper in $A$, these ideals are the same. 
	It follows, that this ideal is maximal, has residue field $\kk$
	and it is the crucial maximal ideal.
\end{remark}

%\begin{remark}
%	\label{rem:valuation}
%	Let $A \subseteq K[y]$ be a maximal subring such that its conductor ideal
%	in $K[y]$ is zero, and let $\mm_A \subseteq A$ be its crucial maximal ideal.
%	By~\cite[Proposition 3.3]{FeOl1970Homomorphisms-mini}, the localization
%	$A_{\mm_A}$ is a valuation ring of height one. Let $(S, A)$ be
%	an admissible pair in $K$ such that $A$ is generated over $R$ by 
%	$(y-\alpha_i)/ t^{s_i}$ for $i = 1, 2, \ldots$ . Let $\alpha \in \kk[[t^{\QQ}]]$
%	be a limit of $(S, A)$. One can easily check, that 
%	the valuation corresponding to $A_{\mm_A}$ is given by
%	\[
%		\omega \colon K(y) \to \QQ \, , \quad
%		f \mapsto \nu(f(\alpha)) \, .
%	\]
%\end{remark}

\begin{proposition}
	\label{prop:conductor}
	For $\alpha \in K^+$, the ring $K^+ + (y-\alpha) K[y]$ is a maximal subring
	of $K[y]$ that contains $K^+[y]$, 
	with non-zero conductor ideal $(y-\alpha) K[y]$. 
	Moreover, all maximal subrings 
	$K^+[y] \subseteq A \subsetneq K[y]$ with non-zero conductor are of this form. 
\end{proposition}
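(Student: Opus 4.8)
The plan is to prove both halves of Proposition~\ref{prop:conductor} using the machinery already developed. For the first half, fix $\alpha \in K^+$ and set $A = K^+ + (y-\alpha)K[y]$. After applying the $K$-algebra automorphism of $K[y]$ sending $y$ to $y+\alpha$ (which preserves $K^+[y]$ since $\alpha \in K^+$), we may assume $\alpha = 0$, so $A = K^+ + yK[y]$. I would first check directly that the conductor ideal of $A$ in $K[y]$ equals $yK[y]$: clearly $y \cdot K[y] \subseteq A$ gives $yK[y]$ inside the conductor, and conversely an element $f$ with $fK[y] \subseteq A$ must have $f \cdot 1 \in A$ and $f \cdot y^{-1}\cdot y \dots$—more carefully, if $f = f_0 + f_1 y + \cdots$ with $f_0 \neq 0$ and $f_0 \notin$ (so $f_0 \in K^+$ possibly a unit), then $f y^{k}$ for suitable $k$ would force a unit coefficient times $y^0$; the cleanest argument is that $K[y]/yK[y] \cong K$ while $A/yK[y] \cong K^+ \subsetneq K$, so $yK[y]$ is already the full conductor by the remark after Lemma~\ref{lem:maxandcond} (every ideal $J$ of $A$ with $J = JK[y]$ lies in the conductor, and the conductor itself satisfies $I = IK[y]$, hence $I/yK[y]$ is an ideal of $K^+$ that is also an ideal of $K$, forcing $I = yK[y]$). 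Then maximality follows from Lemma~\ref{lem:maxandcond} applied to $I = yK[y]$: $A$ is maximal in $K[y]$ iff $A/yK[y] = K^+$ is maximal in $K[y]/yK[y] = K$, and $K^+$ is a one-dimensional valuation ring of $K$, hence an extending maximal subring of the field $K$ by Lemma~\ref{lem:ExtendingMaxOfField}.

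For the converse, let $K^+[y] \subseteq A \subsetneq K[y]$ be a maximal subring with non-zero conductor ideal $I$. Since $A$ contains $K^+[y] \supsetneq K^+$, it is not $K[y]$, so it is an extending maximal subring (it is not finite over itself in a way that matters—actually I would note that any maximal subring containing $K^+[y]$ is extending, as argued in the proof of Proposition~\ref{prop:class}, because the non-extending constructions would change the residue field structure incompatibly, or simply cite that $K[y]$ is not finite over $A$ since $t^{-1} \notin A$ would need checking; the safest route is: if $A$ were non-extending it would be finite over $A$, but then... I'll instead invoke Lemma~\ref{lem:PropertyP_2ForExtMax} only after establishing extendingness via the conductor being prime). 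By the lemma citing \cite[Lemme~3.2]{FeOl1970Homomorphisms-mini}, once we know $A$ is extending, $I$ is a prime ideal of $K[y]$. Since $K[y]$ is a PID and $I \neq 0$, we have $I = pK[y]$ for an irreducible $p \in K[y]$, or $I$ could be a height-zero prime, but $I \neq 0$ rules that out, so $I = pK[y]$ with $p$ irreducible. I claim $\deg_y(p) = 1$: apply Lemma~\ref{lem:maxandcond} with this $I$, so $A/I$ is a maximal subring of $K[y]/pK[y]$, which is a finite field extension $L$ of $K$. But $A/I$ contains the image of $K^+[y]$; since $A \supseteq K^+$, the image of $K^+$ sits inside $A/I \subseteq L$, and $A/I$ is a maximal subring of the field $L$, hence a one-dimensional valuation ring of $L$ by Lemma~\ref{lem:ExtendingMaxOfField}. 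Now $K$ is algebraically closed, so the only finite extension of $K$ is $K$ itself, forcing $\deg_y(p) = 1$, i.e. $p = c(y-\alpha)$ for some $c \in K^\ast$, $\alpha \in K$, so $I = (y-\alpha)K[y]$.

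It then remains to show $A = K^+ + (y-\alpha)K[y]$ and that $\alpha \in K^+$. We have $A \supseteq K^+[y] + I = K^+[y] + (y-\alpha)K[y]$, and $A/I$ is a valuation ring of $L = K[y]/(y-\alpha)K[y] \cong K$ under $y \mapsto \alpha$. Under this isomorphism, the image of $K^+[y]$ is $K^+[\alpha] = K^+$ if $\alpha \in K^+$, but in general it is the $K^+$-subalgebra generated by $\alpha$ inside $K$. Since $A/I$ is a valuation ring of $K$ containing $K^+[\alpha]$ and $K^+$ is already a maximal (one-dimensional valuation) subring of $K$, either $K^+[\alpha] = K^+$ (whence $\alpha \in K^+$ and $A/I = K^+$ by maximality, giving $A = K^+ + (y-\alpha)K[y]$) or $K^+[\alpha] = K$, which would force $A/I = K$ and hence $A = K[y]$, contradicting $A \subsetneq K[y]$. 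The main obstacle I anticipate is the bookkeeping around establishing that $A$ is extending before invoking that the conductor is prime—one must rule out the non-extending case cleanly, and the right tool is that $K[y]$ is not a finite $A$-module (since, e.g., $A$ contains $K^+[y]$ but $K[y]$ over $K^+[y]$ is not finite, and finiteness would descend), so $\Spec(K[y]) \to \Spec(A)$ is non-surjective, making $A$ extending by the Ferrand–Olivier theorem quoted at the start. Once that is in place, the rest is a sequence of applications of Lemma~\ref{lem:maxandcond} and Lemma~\ref{lem:ExtendingMaxOfField} together with $\overline{K} = K$.
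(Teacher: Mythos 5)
Your proof of the first statement matches the paper's (which simply cites Lemma~\ref{lem:maxandcond}); your extra computation that the conductor is exactly $(y-\alpha)K[y]$ is correct in its final form (an ideal $I$ of $K[y]$ with $yK[y]\subseteq I\subseteq A$ gives an ideal $I/yK[y]$ of the field $K$ contained in $K^+$, hence zero). For the converse you take a genuinely different route from the paper. The paper picks any non-zero monic $f$ in the conductor, factors it into linear factors in $K[y]$, and uses the $P_2$ property plus a pigeonhole argument to produce a single linear factor $y-\alpha_i$ with $(y-\alpha_i)/t^n\in A$ for all $n$, whence $(y-\alpha_i)K[y]\subseteq A$ and the conclusion follows. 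You instead invoke primality of the conductor of an extending maximal subring, note that a non-zero prime of the PID $K[y]$ over the algebraically closed field $K$ is $(y-\alpha)$, and then finish by passing to $K[y]/(y-\alpha)\cong K$ and using maximality of $K^+$ in $K$. Your quotient argument is clean and correct (and you do not actually need $A/I$ to be a valuation ring: the dichotomy $K^+[\alpha]\in\{K^+,K\}$ from maximality of $K^+$ already suffices, which also spares you from justifying that $A/I$ is \emph{extending} in $K[y]/I$ before citing Lemma~\ref{lem:ExtendingMaxOfField}).

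There is, however, one genuinely flawed step: your justification that $A$ is extending. You argue that $K[y]$ cannot be a finite $A$-module because ``$K[y]$ over $K^+[y]$ is not finite, and finiteness would descend.'' Finiteness does not descend to a smaller base ring: $\QQ$ is finite over $\QQ$ but not over $\ZZ$. Since the primality of the conductor (the unlabelled lemma quoting \cite[Lemme~3.2]{FeOl1970Homomorphisms-mini}) is the engine of your whole converse, this must be repaired. A correct argument: $t^{-1}\notin A$, since otherwise $K=K^+[t^{-1}]\subseteq A$ (every $c\in K$ satisfies $t^nc\in K^+$ for some $n$) and hence $A=K[y]$; so $t$ is a non-unit of $A$ and lies in some maximal ideal of $A$, while $t$ is a unit of $K[y]$, so no prime of $K[y]$ contracts to a prime of $A$ containing $t$. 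Thus $\Spec(K[y])\to\Spec(A)$ is not surjective and $A$ is extending by the Ferrand--Olivier theorem. (To be fair, the paper also uses this fact silently when it invokes $P_2$ via Lemma~\ref{lem:PropertyP_2ForExtMax}, and asserts it without proof in Proposition~\ref{prop:class}; but the paper does not offer an incorrect reason for it.) With that patch, your proof is complete.
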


%{\color{red} FUTURE: Make a precise connection of this proposition 
%to the last one}

\begin{proof}
	The first statement follows from Lemma~\ref{lem:maxandcond}. 
	For the second statement, let 
	$K^+[y] \subseteq A \subsetneq K[y]$ be a maximal subring and assume
	there exists $0 \neq f \in A$ such that $f K[y] \subseteq A$.
	We can assume that $f$ is monic in $y$.
	Let $f = f_1 \cdots f_k$ be the decomposition of $f$ into monic linear factors
	inside $K[y]$. As $A \subseteq K[y]$ satisfies the property
	$P_2$, for all $n \in \NN$ there exists $i = i(n)$ such that $f_i / (t^{n/k}) \in A$.
	This implies that there exists $i$ such that $f_i / t^n \in A$ for all 
	$n \in \NN$. Let $f_i = y - \alpha_i$. Hence, 
	$K^+[y] + (y-\alpha_i) K[y] \subseteq A$. Since $A \subsetneq K[y]$ is a proper
	subring, we get $\alpha_i \in K^+$ and thus $A = K^+ + (y-\alpha_i) K[y]$.
\end{proof}

\begin{remark}
	\label{rem:valuation}
	Assume that $K$ satisfies the cutoff property.
	Let $A \subseteq K[y]$ be a maximal subring that contains $K^+[y]$,
	$I \subseteq K[y]$ the conductor ideal of $A$ in $K[y]$ 
	(which could be zero), and let 
	$\mm \subseteq A$ be the crucial maximal ideal.
	By~\cite[Proposition 3.3]{FeOl1970Homomorphisms-mini}, the localization
	$(A / I)_{\mm}$ is a one-dimensional valuation ring. Let $(S, \Lambda)$ be
	an admissible pair in $K$ such that $A$ is generated over $K^+$ by 
	$(y-\alpha_i)/ t^{s_i}$ for $i = 1, 2, \ldots \,$, see Proposition~\ref{prop:class}. 
	Let $\alpha \in \kk[[t^{\QQ}]]^+$
	be a limit of $(S, \Lambda)$, which is not unique (however, it exists
	by Lemma~\ref{lem:Limit}). Using Proposition~\ref{prop:class} and 
	Proposition~\ref{prop:conductor} one can check that the $K$-homomorphism
	\[
		K[y] / I \longrightarrow \kk[[t^\QQ]] \, , \quad f \longmapsto f(\alpha)
	\]
	is injective. Hence, 
	\[
		\omega \colon Q(K[y] / I) \longrightarrow \QQ \, , \quad
		f \longmapsto \nu(f(\alpha))
	\]
	is a valuation on the quotient field of $K[y] / I$. With the aid of 
	Remark~\ref{rem:class} one can see, that the valuation on $(A/I)_{\mm}$
	is given by $\omega$. In particular we have for $f \in K[y]$
	\[
%		f \in A \quad \Longleftrightarrow \quad f \in I \quad \textrm{or} \quad 
%		f \notin I \ \textrm{and} \ \omega(f) \geq 0 \, .
		f \in A \quad \Longleftrightarrow \quad \omega(\bar{f}) \geq 0 \, ,
	\]
	where $\bar{f}$ denotes the residue class modulo $I$. Moreover,
	we get for the crucial maximal ideal
	\[
		f \in \mm \quad \Longleftrightarrow \quad \omega(\bar{f}) > 0 \, .
	\]
	This characterization of $A$ and $\mm$ will be very important for us.
\end{remark}

As a consequence of Proposition~\ref{prop:class} and Lemma~\ref{lem:Limit}
we can now classify all the maximal subrings of $\kk[[t^\QQ]][y]$ which contain
$\kk[[t^\QQ]]^+[y]$.

\begin{corollary}
	\label{cor:HahnClass}
	If $K = \kk[[t^\QQ]]$, then for all $\alpha \in K^+$ the ring
	\[
		K^+\left[ \left\{ \,  \frac{y-\alpha}{t^s} \ \big| \
		s = 1, 2, \ldots \, \right\} \right] \,
	\]
	is maximal in $K[y]$ and contains $K^+[y]$. On the other hand, every
	maximal subring of $K[y]$ that contains $K^+[y]$ is of this form.
\end{corollary}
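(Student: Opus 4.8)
The plan is to derive Corollary~\ref{cor:HahnClass} directly from Proposition~\ref{prop:class} by analyzing, in the special case $K = \kk[[t^\QQ]]$, exactly which admissible pairs survive the hypothesis ``$\lim s_i = \infty$ or $(S,\Lambda)$ has no limit in $K$''. The first observation is that $K = \kk[[t^\QQ]]$ trivially satisfies the cutoff property (it contains \emph{all} Hahn series, so truncating any support is harmless), and it contains $\kk(t)$ and is algebraically closed; hence Proposition~\ref{prop:class} applies verbatim. So a maximal subring of $K[y]$ containing $K^+[y]$ is precisely a ring of the form \eqref{eq:MaxGen} attached to an admissible pair $(S,\Lambda)$ that is either unbounded ($\lim s_i = \infty$) or admits no limit in $K$.

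Next I would invoke Lemma~\ref{lem:Limit}: over the \emph{full} Hahn field $\kk[[t^\QQ]]$, every admissible pair has a limit $\alpha \in K^+$. Therefore the second alternative in Proposition~\ref{prop:class} — ``$(S,\Lambda)$ has no limit in $K$'' — is \emph{vacuous} when $K = \kk[[t^\QQ]]$, and we are forced into the case $\lim s_i = \infty$. Now, given such an unbounded admissible pair with limit $\alpha$, I claim the ring \eqref{eq:MaxGen} equals $K^+\bigl[\{(y-\alpha)/t^s : s = 1,2,\ldots\}\bigr]$. Indeed, fix an integer $n \ge 1$. Since $\lim s_i = \infty$, choose $i$ with $s_i \ge n$; then $\supp(\alpha - \alpha_i) \subseteq [s_i,\infty) \subseteq [n,\infty)$, so $(\alpha-\alpha_i)/t^n \in K^+$, whence $(y-\alpha)/t^n = (y-\alpha_i)/t^n + (\alpha-\alpha_i)/t^n$; and $(y-\alpha_i)/t^n = t^{s_i - n}\cdot (y-\alpha_i)/t^{s_i}$ lies in the ring because $s_i - n \ge 0$. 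This shows $(y-\alpha)/t^n$ belongs to the ring of \eqref{eq:MaxGen} for every positive integer $n$. Conversely each generator $(y-\alpha_i)/t^{s_i}$ of \eqref{eq:MaxGen} is recovered from the $(y-\alpha)/t^s$: writing $s_i \le m$ for some integer $m$, we have $(y-\alpha_i)/t^{s_i} = t^{m - s_i}\bigl((y-\alpha)/t^m\bigr) - t^{m-s_i}(\alpha - \alpha_i)/t^m$, and since $\supp(\alpha-\alpha_i)\subseteq[s_i,\infty)$ the coefficient $(\alpha-\alpha_i)/t^m \in K^+$ (as $m \ge s_i$), so this is a $K^+$-combination of the $(y-\alpha)/t^s$. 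Hence the two rings coincide, proving that every maximal subring of the desired type has the claimed shape.

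For the converse direction I would fix an arbitrary $\alpha \in K^+$ and produce an admissible pair realizing $K^+[\{(y-\alpha)/t^s\}]$ to which Proposition~\ref{prop:class} applies. Take $S = \{1, 2, 3, \ldots\}$ and $\alpha_i = \sum_{s < i} a_s t^s$ (the truncation of $\alpha$ below $i$), which is legitimate precisely because $K$ has the cutoff property. One checks immediately that $\supp(\alpha_i) \subseteq [0, i)$ and $\supp(\alpha_{i+1} - \alpha_i) = \supp\bigl(\sum_{i \le s < i+1} a_s t^s\bigr) \subseteq [i, i+1)$, so $(S,\Lambda)$ is admissible; moreover $\lim s_i = \infty$ and $\alpha$ is a limit, so Proposition~\ref{prop:class} gives that $K^+[\{(y-\alpha_i)/t^{s_i}\}]$ is maximal containing $K^+[y]$, and by the identity established in the previous paragraph this ring equals $K^+[\{(y-\alpha)/t^s : s = 1,2,\ldots\}]$. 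This completes both halves.

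The only step requiring genuine care is the bookkeeping with supports in the identity ``$\eqref{eq:MaxGen} = K^+[\{(y-\alpha)/t^s\}]$'': one must be sure that the pieces $(\alpha - \alpha_i)/t^n$ and $(\alpha-\alpha_i)/t^m$ really have nonnegative valuation, which hinges exactly on the defining property $\supp(\alpha - \alpha_i)\subseteq[s_i,\infty)$ of the limit together with choosing the integer exponent $\ge s_i$. Everything else is a direct translation of Proposition~\ref{prop:class} and Lemma~\ref{lem:Limit}; there is no new structural obstacle, and the argument is short.
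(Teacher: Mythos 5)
Your proposal is correct and follows exactly the route the paper intends: the corollary is stated there as an immediate consequence of Proposition~\ref{prop:class} and Lemma~\ref{lem:Limit}, and you supply precisely the missing bookkeeping (the ``no limit'' alternative is vacuous over the full Hahn field, and the ring generated by the $(y-\alpha_i)/t^{s_i}$ coincides with the ring generated by the $(y-\alpha)/t^n$ for integer $n$). The only blemishes are two harmless sign slips in the identities relating $(y-\alpha)$ and $(y-\alpha_i)$, and the parenthetical claim that $(\alpha-\alpha_i)/t^m \in K^+$, which is false as stated when $s_i < m$ is non-integral; what is actually needed (and true) is that $t^{m-s_i}(\alpha-\alpha_i)/t^m = (\alpha-\alpha_i)/t^{s_i} \in K^+$, so the argument stands.
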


\begin{remark}
	\label{rem:HahnClass}
	The maximal subring of $K[y]$ in 
	Corollary~\ref{cor:HahnClass} is 
	the ring $K^+ + (y-\alpha)K[y]$. Its crucial maximal ideal is
	$\nn + (y-\alpha)K[y]$, where $\nn \subseteq K^+$ denotes the unique
	maximal ideal.
\end{remark}

With Proposition~\ref{prop:class} and Proposition~\ref{prop:conductor} 
at hand, we can now give another description
of the maximal subrings of $K[y]$ that contain $K^+[y]$ in the case where 
$K$ is the algebraic closure of $\kk(t)$. We just want to stress the 
following definition in advance.

\begin{definition}
	\label{def:StrictlyIncreasingSequence}
	A subset $S$ of $\QQ$ is called a \emph{strictly increasing sequence}
	if there exists an isomorphism of the natural numbers to $S$
	that preserves the given orders.
\end{definition}

\begin{proposition}
	\label{prop:AlgClorClass}
	Let $K$ be the algebraic closure of $\kk(t)$ inside $\kk[[t^\QQ]]$ and let 
	$\mathscr{S}$ be the set of $\alpha \in \kk[[t^\QQ]]^+$
	such that $\supp(\alpha)$ is contained in a strictly increasing sequence.
	Then we have bijections	
	\begin{align*}
		\Xi_1 \colon K^+ & \ \longrightarrow \
		\left\{
		    \begin{array}{c} 
			\textrm{maximal subrings of $K[y]$ with} \\ 
			\textrm{non-zero conductor that contain $K^+[y]$}		    
		   \end{array}
		\right\} \\
		\Xi_2 \colon \mathscr{S} \setminus K^+ & \ \longrightarrow \
		\left\{
		    \begin{array}{c} 
			\textrm{maximal subrings of $K[y]$ with} \\ 
			\textrm{zero conductor that contain $K^+[y]$}
		    \end{array}
		\right\}
	\end{align*}
	given by 
	\[
		\Xi_1(\alpha) = K^+ + (y-\alpha)K[y]
		\quad \textrm{and} \quad 
		\Xi_2(\beta) = K^+\left[ \left\{ \,  \frac{y-\beta_i}{t^{s_i}} \ \big| \
		 		i \in \NN \, \right\} \right]
	\]
	where $\{ \, s_1 < s_2 <  \ldots \, \} = \supp(\beta)$ and
	$\beta_i$ is the sum of the first $i-1$ non-zero terms of $\beta$.
\end{proposition}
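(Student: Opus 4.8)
The plan is to establish Proposition~\ref{prop:AlgClorClass} by reducing everything to the already-proven classification (Proposition~\ref{prop:class} together with Proposition~\ref{prop:conductor}), checking only the three additional points: (a) that $\Xi_1$ and $\Xi_2$ land in the asserted target sets, (b) that they are injective, and (c) that together they are surjective onto all maximal subrings of $K[y]$ containing $K^+[y]$. First I would observe that $K = \overline{\kk(t)}$ does satisfy the cutoff property (Example~\ref{exp:cutoff}), so Proposition~\ref{prop:class} applies: every maximal subring containing $K^+[y]$ is of the form $K^+[\{(y-\alpha_i)/t^{s_i}\}]$ for an admissible pair $(S,\Lambda)$ of $K$ with either $\lim s_i = \infty$ or $(S,\Lambda)$ having no limit in $K$, and by Proposition~\ref{prop:conductor} such a subring has non-zero conductor precisely when it equals $K^+ + (y-\alpha)K[y]$ for some $\alpha \in K^+$.

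For the non-zero conductor case, Proposition~\ref{prop:conductor} already gives that $\Xi_1(\alpha) = K^+ + (y-\alpha)K[y]$ is such a maximal subring and that every maximal subring with non-zero conductor has this form; injectivity of $\Xi_1$ is immediate since the conductor ideal $(y-\alpha)K[y]$ determines $\alpha$ up to a unit of $K^+$, and normalizing to monic $y-\alpha$ pins down $\alpha$ exactly (so in fact one should read $\Xi_1$ as defined on $K^+$ with $\alpha$, $\alpha'$ giving the same ring iff $\alpha = \alpha'$; this is clear from $(y-\alpha)K[y] = (y-\alpha')K[y]$). So the non-zero conductor part is essentially a restatement. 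The real content is the zero-conductor case. Here I must show: (i) for $\beta \in \mathscr{S}\setminus K^+$ the pair $S = \supp(\beta) = \{s_1 < s_2 < \cdots\}$ with $\beta_i = $ (sum of first $i-1$ nonzero terms of $\beta$) is admissible; (ii) this admissible pair has $\lim s_i = \infty$ or no limit in $K$, so that $\Xi_2(\beta)$ is genuinely maximal by Proposition~\ref{prop:class}; (iii) its conductor is zero; (iv) $\Xi_2$ is injective; (v) every maximal subring with zero conductor arises this way.

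For (i), admissibility of $(S,\Lambda)$ is a direct unwinding of definitions: $\supp(\beta_i) \subseteq [0, s_i)$ because $\beta_i$ collects only the terms of $\beta$ with exponent $< s_i$, and $\supp(\beta_{i+1}-\beta_i) = \{s_i\} \subseteq [s_i, s_{i+1})$ since the difference is the single term of $\beta$ at exponent $s_i$; also each $\beta_i$, being a finite $\kk$-linear combination of powers $t^{s_j}$, is algebraic over $\kk(t)$, hence lies in $K$. For (iii), I would argue that $\beta$ itself is a limit of $(S,\Lambda)$ in $\kk[[t^\QQ]]^+$, and apply the characterization in Remark~\ref{rem:valuation}: the conductor is zero precisely when $\beta \notin K$, equivalently when $(S,\Lambda)$ has no limit in $K$. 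So (ii) and (iii) merge into one point: I must show $\beta \in \mathscr{S}\setminus K^+$ implies $\beta \notin K$. This is the expected main obstacle. The subtlety is that $\beta$ has support contained in a strictly increasing sequence of rationals, but $\lim s_i$ could be finite — and one must rule out $\beta$ being algebraic over $\kk(t)$. The key fact is that any element of $K = \overline{\kk(t)}$ lies in some $\kk((t^{1/n}))$ (a finite extension of $\kk((t))$, since $\kk$ is algebraically closed, by Puiseux), hence has support in $\frac{1}{n}\ZZ$ for some fixed $n$; but a support in $\frac{1}{n}\ZZ$ that is a strictly increasing sequence must tend to $+\infty$. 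Thus if $\beta \in \mathscr{S}$ has $\beta \in K$, then $\lim s_i = \infty$ and $\beta$ would fall into the non-zero conductor case instead (indeed $\Xi_2(\beta)$ with $\lim s_i = \infty$ collapses, via the telescoping $(y-\beta)/t^{s_i} = (y-\beta_i)/t^{s_i} - (\beta-\beta_i)/t^{s_i}$ and $\nu(\beta-\beta_i) \ge s_i$, to $K^+ + (y-\beta)K[y]$ — which is why the domain of $\Xi_2$ excludes $K^+$; more precisely it excludes exactly those $\beta \in \mathscr{S} \cap K$). Conversely if $\beta \notin K$ then $(S,\Lambda)$ has no limit in $K$, so $\Xi_2(\beta)$ is maximal with zero conductor, and $\beta$ is the (non-unique, but existing) limit in $\kk[[t^\QQ]]^+$ by Lemma~\ref{lem:Limit}.

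Finally, for surjectivity (v) and injectivity (iv): given a maximal subring $A$ with zero conductor, Proposition~\ref{prop:class} and Proposition~\ref{prop:conductor} furnish an admissible pair $(S,\Lambda)$ with no limit in $K$ generating $A$; choose a limit $\alpha \in \kk[[t^\QQ]]^+$ (Lemma~\ref{lem:Limit}). One checks that $\supp(\alpha) = \bigcup_i \supp(\alpha_i)$ from the proof of Lemma~\ref{lem:Limit}, and that this union is contained in the strictly increasing sequence obtained by listing $S$ together with the finitely many extra exponents appearing in each $\alpha_{i+1}-\alpha_i$ below $s_{i+1}$ — in fact, after the harmless normalization of Remark~\ref{rem:class} replacing $\alpha_i$ by $\alpha_i' = \alpha_i + t^{s_i}a_{i+1}$, one may take $\alpha$ with $\supp(\alpha)$ listed by a strictly increasing sequence, so $\alpha \in \mathscr{S}$; and $\alpha \notin K$ since there is no limit in $K$, so $\alpha \in \mathscr{S}\setminus K^+$. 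Then $\Xi_2(\alpha) = A$ by comparing generators (the valuation characterization of Remark~\ref{rem:valuation} shows two such rings with the same limit-up-to-support agree). For injectivity, if $\Xi_2(\beta) = \Xi_2(\beta')$ then both rings determine, via Remark~\ref{rem:valuation}, the same valuation $\omega$ on $Q(K[y])$ (the conductor is zero so $K[y]/I = K[y]$), and $\beta, \beta'$ are both limits; since the support of a limit equals $\bigcup_i \supp(\beta_i)$ and the $\beta_i$ are recovered as the truncations of the common generating data, one gets $\beta = \beta'$ by matching coefficients term by term. This completes the argument; the one genuinely substantive step, as noted, is the Puiseux-field input showing $\mathscr{S}\setminus K^+ = \mathscr{S}\setminus K$ and that these are exactly the $\beta$ whose admissible pair has no limit in $K$.
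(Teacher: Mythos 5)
Your overall strategy coincides with the paper's: both halves reduce to Proposition~\ref{prop:class} and Proposition~\ref{prop:conductor}, the non-zero-conductor half is essentially a restatement of Proposition~\ref{prop:conductor}, and the real content is matching the zero-conductor maximal subrings with admissible pairs having no limit in $K$ and then with elements of $\mathscr{S}\setminus K^+$. One secondary remark: the paper's injectivity argument is more concrete than yours --- it shows directly that $(\gamma_{2i}-\gamma_{1i})/t^{s_{1i}}\in D\cap K=K^+$ and then counts support elements to force $\gamma_{1i}=\gamma_{2i}$ --- whereas your appeal to Remark~\ref{rem:valuation} is delicate because two distinct limits of the same admissible pair induce the same valuation; still, that step is recoverable.

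The genuine gap is your key input, Puiseux's theorem. You deduce that every element of $K=\overline{\kk(t)}$ has support in $\frac{1}{n}\ZZ$ for some fixed $n$, hence that a support forming a strictly increasing sequence must tend to $+\infty$, and from this that the admissible pair attached to $\beta\in\mathscr{S}\setminus K^+$ has no limit in $K$. This is valid only in characteristic zero, while the paper explicitly allows an algebraically closed field $\kk$ of arbitrary characteristic. In characteristic $p$ the algebraic closure of $\kk(t)$ inside $\kk[[t^\QQ]]$ is strictly larger than its intersection with the Puiseux field: the Artin--Schreier equation $x^p-x=t^{-1}$ has the root $\sum_{i\geq 1}t^{-1/p^{i}}$, so $\sum_{i\geq 1}t^{1-1/p^{i}}$ lies in $K^+$, has infinite support with unbounded denominators, and its support accumulates at the finite value $1$. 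In particular both your claim that elements of $K$ have support in $\frac{1}{n}\ZZ$ and your claim that $\beta\in\mathscr{S}\cap K$ forces $\lim s_i=\infty$ fail. The paper avoids this by using the cutoff property of $K$ (Definition~\ref{cutoff.def}, Example~\ref{exp:cutoff}) instead: if $\gamma\in K^+$ were a limit of the pair with $s=\lim s_i<\infty$, then $\gamma$ agrees with $\beta$ below $s$, so $\beta$ equals the truncation of $\gamma$ below $s$, which lies in $K$ by the cutoff property, contradicting $\beta\notin K^+$. You would need to replace the Puiseux argument by this (or an equivalent) to cover positive characteristic; note that your surjectivity step, which uses finiteness of the supports of the increments $\alpha_{i+1}-\alpha_i$, has the same characteristic-zero restriction (as, admittedly, does the corresponding sentence in the paper's own proof).
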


\begin{proof}
	Proposition~\ref{prop:conductor} implies that $\Xi_1$ is bijective.
	
	Let $\beta \in \mathscr{S} \setminus K^+$ 
	and let $S = \{ \, s_1 < s_2 < \ldots \, \}$,
	$\Lambda = \{ \, \beta_1, \beta_2, \ldots \, \}$. Then $(S, \Lambda)$
	is an admissible pair and $\beta$ is a limit of it. Since 
	$\beta \not\in K^+$ and since $K$ satisfies the cutoff property, 
	there exists no limit of $(S, \Lambda)$ in $K^+$. Hence, by 
	Proposition~\ref{prop:class} the subring $\Xi_2(\beta)$ is maximal in $K[y]$.
	Thus $\Xi_2$ is well-defined. 
	
	Let $A \subseteq K[y]$ be a maximal subring with zero conductor that
	contains $K^+[y]$. By Proposition~\ref{prop:class} there 
	exists an admissible pair $(S', \Lambda')$ in $K$ such that
	\[
		A = K^+\left[ \left\{ \,  \frac{y-\beta'_i}{t^{s'_i}} \ \big| \
		 		i \in \NN \, \right\} \right]
	\]
	where $S' = \{ \, s'_1 < s'_2 <  \ldots \, \}$ and $\Lambda' = 
	\{ \, \beta'_1, \beta'_2, \ldots \, \}$, and either $\lim s'_i = \infty$
	or $(S', \Lambda')$ has no limit in $K$. If $(S', \Lambda')$ has a limit 
	in $K$, then the conductor of $A \subseteq K[y]$ is non-zero.
	Thus $(S', \Lambda')$ has no limit in $K$. Since $K$ is the algebraic closure
	of $\kk(t)$, the support $\supp(\beta'_i)$ is finite
	for all $i$. Hence the pair $(S', \Lambda')$ has a limit $\beta'$ inside 
	$\mathscr{S} \setminus K^+$. Moreover, this limit satisfies 
	$\Xi_2(\beta') = A$, which proves the surjectivity of $\Xi_2$.
	
	Let $\gamma_1$, $\gamma_2 \in \mathscr{S} \setminus K^+$ such that
	$\Xi_2(\gamma_1) = \Xi_2(\gamma_2)$ and denote this ring by $D$. 
	For $k=1,2$, let 
	$\{ \, s_{k1} < s_{k2} <  \ldots \, \} = \supp(\gamma_k)$
	and let $\gamma_{ki} \in K$ be the 
	sum of the first $i-1$ non-zero terms of $\gamma_k$. Let 
	$i > 0$ be an integer. Without loss of generality we can assume that 
	$s_{1i} \leq s_{2i}$. Since
	$(y-\gamma_{ki})/t^{s_{ki}} \in D$ for $k = 1, 2$, it follows that
	\[
		\frac{\gamma_{2i}- \gamma_{1i}}{t^{s_{1i}}} = 
		\frac{y-\gamma_{1i}}{t^{s_{1i}}} - \frac{y-\gamma_{2i}}{t^{s_{1i}}}
		\in D \cap K = K^+ \, .
	\]
	Hence $\gamma_{2i} = \gamma_{1i} + t^{s_{1i}} \eta$ where $\eta \in K^+$.
	However, since $\supp(\gamma_{1i})$ and $\supp(\gamma_{2i})$ have the
	same number of elements, it follows that $\eta = 0$. Thus
	$\gamma_{1i} = \gamma_{2i}$ for all $i$. This implies that 
	$\gamma_1 = \gamma_2$ and hence $\Xi_2$ is injective.
\end{proof}

\subsection{\texorpdfstring{Description of all maximal subrings of $\kk[t, t^{-1}, y]$ 
	that contain $\kk[t, y]$ by ``intersection"}
	{Description of all maximal subrings of k[t, t^{-1}, y] 
	that contain k[t, y] by "intersection"} }
\label{subsec:descriptionByIntersection}

In this subsection we still fix an algebraically closed subfield 
$K \subseteq \kk[[t^{\QQ}]]$ that contains the field of rational functions $\kk(t)$.
Moreover, we fix a subring $L \subseteq K$ that contains $\kk[t, t^{-1}]$. 
%We make the following general assumption:
%\[
%	\begin{array}{c}
%		\textrm{$L^+$ is a Pr\"ufer domain,} \\
%		\textrm{i.e. $L^+$ is a domain and the localization} \\ 
%		\textrm{in every prime ideal is a valuation ring} \, .
%	\end{array}
%\]
%For example $L = \kk[t, t^{-1}]$ or $L = \kk(t)$ satisfy this assumption. 
%
%In particular we have $(L^+)_t = L$.
Recall that $L^+$ (respectively $K^+$) denotes the elements in 
$L$ (respectively $K$) of $\nu$-valuation $\geq 0$.

\begin{lemma}
	\label{lem:flat}
	The ring extension $L^+ \subseteq K^+$ is flat.
\end{lemma}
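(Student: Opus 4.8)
The plan is to factor the extension $L^+ \subseteq K^+$ through the valuation ring of $Q(L^+)$. Write $F = Q(L^+)$, regarded as a subfield of $K$, and let $V = \{\, x \in F \mid \nu(x) \geq 0 \,\}$ be the valuation ring of $\nu|_F$, so that $L^+ \subseteq V \subseteq K^+$. I will show that $V$ is a localization of $L^+$ — hence flat over $L^+$ — and that $K^+$ is flat over $V$; since a composite of flat maps is flat, this proves the lemma.

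For the first point, set $\mathfrak q = \{\, x \in L^+ \mid \nu(x) > 0 \,\} = \mm_{K^+} \cap L^+$. This is a prime ideal of $L^+$, since $L^+/\mathfrak q$ embeds into the residue field $K^+/\mm_{K^+}$. I claim that $V = (L^+)_{\mathfrak q}$. The inclusion $(L^+)_{\mathfrak q} \subseteq V$ is clear, because an element $a/s$ with $a, s \in L^+$ and $\nu(s) = 0$ has valuation $\nu(a) \geq 0$. Conversely, let $x \in V$ and write $x = p/q$ with $p, q \in L^+$ and $q \neq 0$. Since $\nu$ is $\QQ$-valued, write $\nu(q) = M/N$ with $M \in \ZZ_{\geq 0}$ and $N \in \ZZ_{\geq 1}$. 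Because $t^{-M} \in \kk[t, t^{-1}] \subseteq L$, both $q^{N} t^{-M}$ and $p\, q^{N-1} t^{-M}$ lie in $L$; a valuation count gives $\nu(q^{N} t^{-M}) = 0$ and $\nu(p\, q^{N-1} t^{-M}) = \nu(p) - \nu(q) = \nu(x) \geq 0$, so both lie in $L^+$, and the former lies outside $\mathfrak q$. Since $x = (p\, q^{N-1} t^{-M})\,(q^{N} t^{-M})^{-1}$, we get $x \in (L^+)_{\mathfrak q}$. Hence $V = (L^+)_{\mathfrak q}$ is a localization of $L^+$, so it is flat over $L^+$.

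For the second point, $K^+$ is a subring of the field $K$, hence an integral domain, hence torsion-free as a module over its subring $V$; since $V$ is a valuation ring, torsion-free $V$-modules are flat, so $K^+$ is flat over $V$. The only step that needs a genuine idea is the identity $V = (L^+)_{\mathfrak q}$: a priori $L^+$ need not be Noetherian or integrally closed, so it is not obvious that inverting the valuation-zero elements of $L^+$ recovers the whole valuation ring of $Q(L^+)$. What makes it work is precisely that $\nu$ is $\QQ$-valued and that $t^{\pm 1}$ lies in $L$, which lets us clear the denominator $q$ against $t^{M}$ after raising $q$ to the power $N$. Everything else — localizations are flat, composites of flat maps are flat, torsion-free modules over a valuation ring are flat — is standard.
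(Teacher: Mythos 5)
Your proof is correct and is essentially the paper's own argument: the paper also localizes $L^+$ at the prime $\nn \cap L^+$ (your $\mathfrak{q}$), uses the same $g^b/t^a$ trick exploiting that the value group is $\QQ$ and $t^{-1}\in L$ to show this localization is the valuation ring of $Q(L^+)$, and concludes via torsion-freeness over a valuation ring. The only cosmetic difference is that you name the valuation ring $V$ up front and verify the equality $V=(L^+)_{\mathfrak q}$ in both directions, whereas the paper phrases the same computation as ``the localization is a valuation ring.''
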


\begin{proof}
	Let $\nn$ be the unique maximal ideal of the 
	valuation ring $K^+$. This ideal consists of all elements in $K$
	with $\nu$-valuation $> 0$. Denote by $L'$ the localization
	$(L^+)_{\nn \cap L^+}$. We show that $K^+$ is a flat
	$L'$-module, which implies then the result. 
	Clearly, $K^+$ is a torsion-free $L'$-module. By 
	\cite[Chp. I, \S2, no. 4, Proposition~3]{Bo1972Elements-of-mathem},
	it is thus enough to prove that $L'$ is a valuation ring. 
	
	Let $g, h \in L^+$ and assume that $g \neq 0$,  $\nu(h/g) \geq 0$. 
	As the value group of $\nu$ is $\QQ$,
	there exist integers $a \geq 0$, $b > 0$ such that $\nu(g) = a/b$. 
	Thus we get 
	\[
		\nu\left(\frac{h g^{b-1}}{t^a}\right) \geq 0
		\quad \textrm{and} \quad \nu\left(\frac{g^b}{t^a}\right) = 0
	\]
	and therefore $hg^{b-1}/t^a \in L^+$, $g^b/t^a \in L^+ \setminus \nn$.
	This implies $h/g \in L'$. Hence, $L'$ is a valuation ring 
	(with valuation $\nu |_{Q(L^+)}$).
\end{proof}

%\begin{lemma}
%	\label{lem:coeff}
%	Let $S$ be the multiplicative set of elements in $L$ with $\nu$-valuation
%	equal to zero. Then $L_S$ is the quotient field $Q(L)$.
%\end{lemma}
%
%\begin{proof}
%	Let $f \in L$, then there exists an integer $a > 0$ 
%	such that $\nu(f^a)$ is an integer. Let $b = \nu(f^a)$. 
%	Thus $f^a / t^b \in S$ and 
%	\[
%		\frac{1}{f} = \frac{t^b}{f^a} \cdot \frac{f^{a-1}}{t^b} \in L_S \, .
%	\]
%	This implies the statement.
%\end{proof}

Our first result says that one can construct every maximal subring
of $L[y]$ that contains $L^+[y]$ by intersecting $L[y]$ with 
some maximal subring of $K[y]$ that contains $K^+[y]$ under a certain assumption.

\begin{proposition}
	\label{prop:existenceAbove}
	Assume that $L^+$ is a maximal subring of $L$.
	If $L^+[y] \subseteq B \subsetneq L[y]$ is a maximal subring,
	then there exists a maximal subring $K^+[y] \subseteq A \subsetneq K[y]$
	such that $B = A \cap L[y]$.
\end{proposition}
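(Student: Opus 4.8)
The plan is to lift the maximal subring $B$ along the flat extension $L^+ \subseteq K^+$ of Lemma~\ref{lem:flat}, using the structural description of $P_2$-subrings from Proposition~\ref{prop:P_2} (applied over $L$, which inherits the cutoff property since it contains $\kk[t,t^{-1}]$ and is a subring of $\kk[[t^\QQ]]$ — though one must first check this, so perhaps one argues directly). Since $B$ is a maximal subring of $L[y]$ containing $L^+[y]$, it is an extending maximal subring, so by Lemma~\ref{lem:PropertyP_2ForExtMax} it satisfies property $P_2$ in $L[y]$, and by Proposition~\ref{prop:rep} we have $B = L^+[B_1]$. The goal is to produce an admissible pair (or a degree-one generator) over $K$ that cuts back down to the data defining $B$.

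First I would split into the two cases of Proposition~\ref{prop:P_2} for $B$. In the conductor case, $B = L^+ + (y-\alpha)L[y]$ for some $\alpha \in L^+$ (using the analogue of Proposition~\ref{prop:conductor} over $L$, or arguing that $B_1$ is generated by $y - \alpha$). Then I would set $A = K^+ + (y-\alpha)K[y]$, which by Lemma~\ref{lem:maxandcond} (or Proposition~\ref{prop:conductor}) is a maximal subring of $K[y]$ containing $K^+[y]$. The intersection $A \cap L[y]$ consists of polynomials $f \in L[y]$ that can be written as $c + (y-\alpha)g$ with $c \in K^+$, $g \in K[y]$; evaluating the constant term forces $c = f(\alpha) \in L \cap K^+ = L^+$, and then $g = (f - c)/(y-\alpha) \in L[y]$ automatically since $y - \alpha$ is monic in $L[y]$. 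So $A \cap L[y] = L^+ + (y-\alpha)L[y] = B$. In the admissible-pair case, $B = L^+[\{(y-\alpha_i)/t^{s_i}\}]$ for an admissible pair $(S,\Lambda)$ of $L$; this is also an admissible pair of $K$, and by Proposition~\ref{prop:class} (possibly after checking $\lim s_i = \infty$ or that the pair has no limit in $K$ — here a limit $\alpha \in K$ would, by the cutoff property, give an $\alpha \in L$ contradicting maximality of $B$ via $B \subseteq L^+[(y-\alpha)/t^s]$, unless $s = \lim s_i = \infty$) we get that $A = K^+[\{(y-\alpha_i)/t^{s_i}\}]$ is maximal in $K[y]$ containing $K^+[y]$.

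**The main obstacle** will be verifying $A \cap L[y] = B$ in the admissible-pair case: the inclusion $B \subseteq A \cap L[y]$ is immediate, but for the reverse one must show that any $f \in L[y]$ lying in $A$ already lies in $B$. Here I expect to use the valuation description from Remark~\ref{rem:valuation}: picking a limit $\alpha \in \kk[[t^\QQ]]^+$ of $(S,\Lambda)$ (which exists by Lemma~\ref{lem:Limit}), membership in $A$ is governed by $\omega(\bar f) = \nu(f(\alpha)) \geq 0$ modulo the conductor, and the same $\alpha$ serves as a limit of the pair viewed over $L$, so the same valuation criterion characterizes $B$. Concretely, flatness of $L^+ \subseteq K^+$ (Lemma~\ref{lem:flat}) lets one compute $A \cap L[y]$ degree-by-degree: writing $f = f_n y^n + \cdots + f_0 \in L[y]$ and inducting on $n$ as in the proof of Proposition~\ref{prop:rep}, property $P_2$ of $A$ in $K[y]$ reduces to showing that whenever $(y-\alpha)/t^s \in A$ with $\alpha \in L$, $s \in \QQ$, this degree-one element already lies in $B$ — which follows because $\alpha_i - \alpha \in K^+$ for the relevant $i$ forces (by the cutoff property over $L$ and the admissibility relations) $\supp(\alpha - \alpha_i) \subseteq [s_i,\infty)$, hence $(y-\alpha)/t^s \in L^+[(y-\alpha_i)/t^{s_i}] \subseteq B$. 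Assembling these reductions, one concludes $A \cap L[y] = B$, completing the proof. I would finally double-check that $L$ itself satisfies the cutoff property (or circumvent it), since Proposition~\ref{prop:P_2} and Proposition~\ref{prop:class} are stated under that hypothesis; if $L$ need not satisfy it, one instead applies these propositions only over $K$ and recovers the structure of $B$ from $B = L^+[B_1]$ together with the $P_2$-property and the hypothesis that $L^+$ is maximal in $L$ (which rules out the degenerate generating sets).
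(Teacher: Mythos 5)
There is a genuine gap, and it sits at the very first step: you assume that $B$ admits the same structural description over $L$ that Propositions~\ref{prop:rep}, \ref{prop:P_2}, \ref{prop:conductor} and \ref{prop:class} provide over $K$. But in this subsection $L$ is only a subring of $K$ containing $\kk[t,t^{-1}]$ --- it is not algebraically closed and in general not even a field --- whereas Proposition~\ref{prop:rep} needs to factor elements of $A$ into \emph{linear} factors over an algebraically closed coefficient field before property $P_2$ can pick one out. Consequently the claim ``by Proposition~\ref{prop:rep} we have $B = L^+[B_1]$'' is false, and with it the entire case division into ``conductor case'' and ``admissible-pair case''. A concrete counterexample is the paper's own motivating example in Section~\ref{ClassContainKty.sec}: for $L = \kk[t,t^{-1}]$ the ring $B = \kk[t,y] + (y^2-t)\kk[t,t^{-1},y]$ is a maximal subring containing $L^+[y]=\kk[t,y]$, yet one checks that $B_1 = \kk[t,y]_1$, so $L^+[B_1] = \kk[t,y] \subsetneq B$; the same phenomenon occurs for $L=\kk(t)$ (the case actually used in Corollary~\ref{cor:BijectiveCorrespondenceAndLocalization}), e.g.\ for $B = \bigl(K^+ + (y-t^{1/2})K[y]\bigr) \cap \kk(t)[y]$. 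Your closing remark that one could ``circumvent'' the cutoff issue by recovering $B$ from $B = L^+[B_1]$ therefore rests on the same false premise; the obstruction is not the cutoff property but the failure of linear factorization over $L$. Note also that your argument never genuinely uses the hypothesis that $L^+$ is maximal in $L$, which is a sign that something essential is missing, and that Proposition~\ref{prop:existenceAbove} does not assume $K$ has the cutoff property, while your route needs it.

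The paper's proof goes in a completely different, non-constructive direction: it shows that $K^+[B] \subsetneq K[y]$ and then invokes Zorn's lemma. The maximality of $L^+$ in $L$ forces $B \cap L = L^+$ and hence yields an injection $L^+/tL^+ \to L[y]/B$, $\lambda \mapsto \lambda t^{-1}$; flatness of $L^+ \subseteq K^+$ (Lemma~\ref{lem:flat}) transports this to an injection $K^+/tK^+ \to K^+ \otimes_{L^+} (L[y]/B)$, so $K^+ \otimes_{L^+} B \subsetneq K^+ \otimes_{L^+} L[y]$ and $K^+[B] \neq K[y]$. A Zorn-maximal element $A$ of $\{\, A \subseteq K[y] \mid A \supseteq K^+[B],\ t^{-1} \notin A \,\}$ is then a maximal subring of $K[y]$ (any strictly larger ring contains $t^{-1}$, hence contains $K[y] = K^+[y]_t$), and $B = A \cap L[y]$ follows from the maximality of $B$ in $L[y]$ since $t^{-1} \notin A \cap L[y]$. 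If you want to keep an explicit, classification-based construction, you would have to first pass from $B$ to the ring it generates over $K^+$ and only then apply the structure theory --- which is essentially what the paper's flatness argument makes rigorous.
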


\begin{remark}
	The assumption, that $L^+$ is a maximal subring of $L$
	is satisfied for example if $L = \kk[t, t^{-1}]$ or $L = \kk(t)$.
\end{remark}

\begin{proof}

	Let $M$ be the $L^+$-module $L[y] / B$.
	By assumption, $M$ is non-zero. In fact, since $L^+$ is a maximal subring
	of $L$, we have an injection
	\[
		L^+ / tL^+ 
		\longrightarrow M \, , \quad \lambda \longmapsto \lambda t^{-1} \, .
	\]
	Since $K^+$ is a flat $L^+$-module (see Lemma~\ref{lem:flat}),
	we get an injection
	\[
		K^+ / t K^+ \simeq K^+ \otimes_{L^+} (L^+ / tL^+) 
		\longrightarrow K^+ \otimes_{L^+} M \, .
	\]
	Thus $K^+ \otimes_{L^+} M$ is non-zero. Again, since 
	$K^+$ is a flat $L^+$-module, this implies that
	\[
		K^+ \otimes_{L^+} B \subsetneq K^+ \otimes_{L^+} L[y] \, .
	\]
	Therefore, $K^+[B]$ is a proper subring of $K[y]$, which contains
	$K^+[y]$. Applying Zorn's Lemma to
	\[
		\{ \, A \subseteq K[y] \ | \
		A \supseteq K^+[B] \ \textrm{and $t^{-1} \not\in A$} \,Ê\}
	\]
	yields a maximal subring $A$ in $K[y]$ that lies over
	$K^+[B]$. Thus $B = A \cap L[y]$.	
\end{proof}

%\begin{lemma}
%	\label{lem:existencemax}
%	Let $T[y] \subseteq B \subsetneq L[y]$ be an intermediate ring.
%	Then there exists a maximal subring $B' \subseteq L[y]$
%	that lies over $B$.
%\end{lemma}
%
%\begin{proof}
%	Apply Zorn's Lemma to the set of all subrings $B' \subseteq L[y]$
%	that do not contain $t^{-1}$.
%\end{proof}

%Let $R[y] \subseteq A \subseteq K[y]$ be a maximal subring with conductor
%ideal $I \subseteq K[y]$.
%
% and let $\alpha \in \kk[[t^\QQ]]$ such that 
%$A = \kk[[t^{\QQ_{\geq 0}}]] + (y-\alpha)\kk[[t^\QQ]][y] \cap K[y]$.
%By Lemma~\ref{lem:maxandcond}, $A / I \subseteq K[y] / I$
%is a maximal subring. By Remark~\ref{rem:valuation}, the localization 
%$(A / I)_{\mm_A}$ is a valuation ring with valuation 
%$\omega \colon Q(K[y] / I) \to \QQ$ given by $\omega(f) = \nu(f(\alpha))$.
%Thus it follows for $f \in K[y]$
%\[
%	f \in A \quad \Longleftrightarrow \quad f \in I \quad \textrm{or} \quad 
%	f \notin I \ \textrm{and} \ \omega(f) \geq 0 \, .
%\]

In the next proposition we prove that any maximal subring of $K[y]$ that lies
over $K^+[y]$ gives a maximal subring of $L[y]$ after intersection with $L[y]$.

\begin{proposition}
	\label{prop:interscting}
	Assume that $K$ satisfies the cutoff property.
	Let $K^+[y] \subseteq A \subsetneq K[y]$ be a maximal subring
	and let $B = A \cap L[y]$.
	Then
	\begin{enumerate}[i)]
		\item If $I$ denotes the conductor ideal of 
			$A$ in $K[y]$, then $I \cap B$ is the conductor ideal of
			$B$ in  $L[y]$.
		\item The subring $B \subsetneq L[y]$ is maximal. Moreover,
			if $\mm$ denotes the crucial maximal ideal of $A$,
			then $\mm \cap L[y]$ is the crucial maximal ideal of $B$.
	\end{enumerate}
\end{proposition}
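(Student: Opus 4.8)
The plan is to first reduce everything to a cleaner local picture by exploiting the valuation description from Remark~\ref{rem:valuation}, and then to verify the two claims more or less in parallel. Let $I$ be the conductor of $A$ in $K[y]$. By the lemma following Lemma~\ref{lem:maxandcond}, $I$ is a prime ideal of $K[y]$, so $K[y]/I$ is a domain; by Proposition~\ref{prop:conductor} either $I = 0$ or $I = (y-\alpha)K[y]$ for some $\alpha \in K^+$. In either case Remark~\ref{rem:valuation} gives an injection $K[y]/I \hookrightarrow \kk[[t^\QQ]]$, hence a valuation $\omega$ on $Q(K[y]/I)$ with the property that, for $f \in K[y]$, one has $f \in A \iff \omega(\bar f) \geq 0$ and $f \in \mm \iff \omega(\bar f) > 0$.

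First I would handle i). Set $J = I \cap B$; it is clear that $J L[y] \subseteq I \cap B$... more precisely that $J$ is an ideal of $B$ with $J \cdot L[y] \subseteq B$ (since $I \cdot K[y] = I \subseteq A$), so $J$ is contained in the conductor of $B$ in $L[y]$. For the reverse inclusion, suppose $0 \neq b \in B$ satisfies $b L[y] \subseteq B$; I must show $b \in I$, i.e. $\bar b = 0$ in $K[y]/I$. If $I = 0$ this says $b L[y] \subseteq B = A \cap L[y]$ forces $b = 0$, which follows because $b K[y] \subseteq A$ would then contradict $I = 0$ — but $b K[y] \subseteq A$ need not hold a priori, so here one argues with the valuation: if $I = 0$ then $\omega$ is a valuation on $Q(K[y])$, and $b L[y] \subseteq B$ together with $\omega(b) \geq 0$ and the fact that $\omega$ takes arbitrarily negative values on $L[y]$ (because $L \supseteq \kk[t,t^{-1}]$ and $\omega(1/t) = -1 < 0$) would force a contradiction unless one tracks things carefully; the clean way is: $b \cdot t^{-n} \in B \subseteq A$ for all $n$, so $\omega(\bar b) - n \geq 0$ for all $n$, hence $\omega(\bar b) = +\infty$, i.e. $\bar b = 0$ in $K[y]/I$, i.e. $b \in I$. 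So $b \in I \cap B = J$, proving $J$ is exactly the conductor of $B$ in $L[y]$.

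Next, for ii), I would first dispose of the case $I \neq 0$: then $I = (y-\alpha)K[y]$ with $\alpha \in K^+$, and $A = K^+ + (y-\alpha)K[y]$. One checks $B = A \cap L[y] = L^+ + J$ where $J = I \cap L[y]$, and then applies Lemma~\ref{lem:maxandcond} (with the ideal $J$, which satisfies $J = J L[y]$ by part i)): the quotient $B/J \hookrightarrow L[y]/J$ is identified with $L^+ \hookrightarrow$ (a subring of $K$ of the form $L[\alpha]$ or similar), and maximality here reduces to the one-variable statement — but this still requires knowing $L^+$ is maximal in the relevant ring, which is delicate if $\alpha \notin L$. The cleaner uniform approach, which I'd use for \emph{both} cases, is to work modulo the conductor throughout: pass to $\bar A = A/I \subseteq K[y]/I =: \bar R$ and $\bar B = B/J \subseteq L[y]/J =: \bar R'$, noting $\bar R' \hookrightarrow \bar R$ and that $\bar A = \{x \in \bar R : \omega(x) \geq 0\}$ is the valuation ring of $\omega$ on $Q(\bar R)$ (a one-dimensional valuation ring by Remark~\ref{rem:valuation}), hence $\bar B = \bar A \cap \bar R'$ is the valuation ring of $\omega|_{Q(\bar R')}$ intersected with $\bar R'$. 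Then I'd show $\bar B$ is a maximal subring of $\bar R'$ — i.e. an extending maximal subring — by exhibiting that for any $f \in \bar R' \setminus \bar B$, $\bar B[f] = \bar R'$: since $\omega(f) < 0$ and the value group of $\omega$ on $Q(\bar R')$ is a subgroup of $\QQ$ containing $\ZZ$ (because $t^{-1} \in \bar R'$ has $\omega = -1$), one can clear denominators using powers of $f$ and elements of $\bar B$ to recover all of $\bar R'$; concretely $\bar R'$ is generated over $\bar B$ by the image of $t^{-1}$, and $\omega(t^{-1}) = -1$ generates the relevant value, so $\bar B[f] = \bar R'$ follows by a short computation showing $1/t \in \bar B[f]$. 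Lifting back via Lemma~\ref{lem:maxandcond} gives that $B$ is maximal in $L[y]$, and the identification $\mm \cap L[y] = \{f : \omega(\bar f) > 0\}$ is exactly the crucial maximal ideal of $B$ by the same valuation criterion.

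The main obstacle I anticipate is \textbf{ii) in the zero-conductor case}: there $\bar R = K[y]$ itself, $\bar R' = L[y]$, and showing $\bar B = A \cap L[y]$ is maximal in $L[y]$ amounts to showing that $L[y]$ is generated over $A \cap L[y]$ by a single element whose $\omega$-value is just below $0$, which requires knowing the value group of $\omega$ restricted to $Q(L[y])$ is discrete enough (it is, being inside $\QQ$ and containing $\ZZ$ via $t^{-1}$) \emph{and} that no intermediate ring sneaks in. I expect the crux to be verifying $\bar B[f] = \bar R'$ for all $f \notin \bar B$ — the $P_2$-property of $A$ in $K[y]$ (Lemma~\ref{lem:PropertyP_2ForExtMax}) combined with the valuation should let one reduce $f$ to the form $(y - \beta)/t^s$ and then explicitly produce $1/t$, but the bookkeeping with supports (tracking which $\alpha_i$ from the admissible pair to use, as in the proof of Proposition~\ref{prop:class}) is where the real work lies. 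Everything else — part i), and the $I \neq 0$ subcase — should follow smoothly from Lemma~\ref{lem:maxandcond}, flatness of $L^+ \subseteq K^+$ (Lemma~\ref{lem:flat}), and the valuation dictionary of Remark~\ref{rem:valuation}.
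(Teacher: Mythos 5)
Part i) of your proposal is fine: the argument ``$b t^{-n}\in B\subseteq A$ for all $n$, hence $\omega(\bar b)=+\infty$, hence $b\in I$'' is essentially the paper's argument (which phrases it as $A_t=K[y]$, so $bK[y]\subseteq A$). The problem is part ii). Your reduction of maximality to the statement ``$\bar B[f]=\bar R'$ for every $f\in\bar R'\setminus\bar B$'' is \emph{tautologically equivalent} to maximality of $\bar B$ in $\bar R'$, so it buys nothing by itself; everything hinges on the ``short computation showing $1/t\in\bar B[f]$,'' which you do not supply and which is not routine. Concretely: if $\omega(f)=-a/b<0$, the natural move produces $g=t^{a-1}f^b\in\bar B[f]$ with $\omega(g)=-1$, and then $tg\in\bar B$ with $\omega(tg)=0$, say $tg-c\in\nn$ for some $c\in\kk^\ast$; but $1/t=(g-(tg-c)/t)/c$ and $(tg-c)/t$ has $\omega$-value $\omega(tg-c)-1$, which is only $>-1$ and can still be negative, so it need not lie in $\bar B$ and the argument does not close. (The criterion of Proposition~\ref{prop:red}, which makes this kind of one-element argument work over $K$, relies on Proposition~\ref{prop:rep} and Lemma~\ref{lem:P2} and hence on $K$ being algebraically closed; none of that is available over $L$.) You yourself flag this as ``where the real work lies,'' and that work is exactly the content of the proposition.

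The paper closes this gap by an entirely different, more indirect route that you would need to reproduce or replace: (a) it shows that $(B/J)_{\nn}$ equals $(A/I)_{\mm}\cap Q(L[y]/J)$ inside $Q(K[y]/I)$ (by the same denominator-clearing trick as in Lemma~\ref{lem:flat}, using that the value group is $\QQ$ and $\omega(t)=1$), hence is a one-dimensional valuation ring and therefore a \emph{maximal subring of the fraction field} $Q(L[y]/J)$ by Lemma~\ref{lem:ExtendingMaxOfField}; (b) it shows, via $\nn\subseteq\rad(tB)$ from Remark~\ref{rem:class}, that $\{\nn\}$ is exactly the complement of the image of $\Spec L[y]\to\Spec B$; and (c) for an intermediate ring $B/J\subseteq C\subsetneq L[y]/J$ it deduces from (b) that $\Spec C\to\Spec(B/J)$ is surjective, picks a prime $\pp$ over $\nn$, concludes $C_{\pp}=(B/J)_{\nn}$ from the field-level maximality in (a), and finally gets $C=B/J$ from $C=\bigcap_{\pp}C_{\pp}$. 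Step (b) is also what identifies $\mm\cap L[y]$ as the crucial maximal ideal, a point your proposal asserts but does not argue. So the proposal is correct on i) but has a genuine gap at the heart of ii).
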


\begin{proof} $ $
	\begin{enumerate}[i)]
		\item Let $b \in I \cap B$. Then $b L[y] \subseteq A \cap L[y] = B$.
			Thus $b$ lies in the conductor of $B$ in $L[y]$.
			Now, let $f \in B$ be an element of the conductor
			of $B$ in $L[y]$. Then we have $f L[y] \subseteq B$ 
			and in particular, $f/ t^n \in B \subseteq A$ for all $n \in \NN$.
			As $K[y] = A_t$, this implies that $f K[y] \subseteq A$.
			Thus $f \in I \cap B$.
		\item	Let $I \subseteq K[y]$ be the conductor ideal of $A$ in $K[y]$.
			By i) the intersection $J = I \cap B$ is the conductor ideal
			of $B$ in $K[y]$. 
%			It is enough to prove that
%			\[
%				B/J \subseteq L[y]/J
%			\]
%			is a maximal subring. 
			Let $\mm \subseteq A$ be the crucial
			maximal ideal and let $\nn = \mm \cap B$. 
			We divide the proof in several steps
			\begin{enumerate}[a)]
			\item 
			We claim that $(B/J)_{\nn}$ is a one-dimensional
			valuation ring. Since $(A/I)_\mm$ is a one-dimensional
			valuation ring 
			(see \cite[Proposition 3.3]{FeOl1970Homomorphisms-mini}), 
			it is enough to prove that
			\[
				(B/J)_{\nn} = 
				(A/I)_{\mm} \cap Q(L[y]/J)
			\]
			inside $Q(K[y]/I)$ (see also 
			\cite[Theorem~10.7]{Ma1986Commutative-ring-t}).
			Let $g, h \in L[y]/J$ be non-zero elements and assume that 
			$h / g \in (A/I)_{\mm}$.
			Thus it follows for the valuation $\omega$ defined in 
			Remark~\ref{rem:valuation} that
			$\omega(h / g) \geq 0$. 
			There exist integers $a, b$ such that $\omega(g) = a / b$
			and we can assume that $b > 0$. Thus we have
			$\omega(g^b /t^a) = 0$.
			Since $\omega(h) \geq \omega(g)$ we get 
			$\omega(h g^{b-1} /t^a) \geq 0$. Thus 
			$g^b/t^a$ and $h g^{b-1}/t^a$ both lie inside $A/I$.
			Using the fact that
			\[
				B/J = A/I \cap L[y] / J
				\subseteq K[y]/I
			\]
			we get
			\[
				\frac{h}{g} = \frac{h \cdot (g^{b-1} /t^a)}{g^b /t^a} \in 
				(B/J)_{\nn} \, .
			\]
			Thus we have $(A/I)_{\mm} \cap Q(L[y]/J) = (B/J)_{\nn}$.
			Note that the reasoning is similar to the proof of
			Lemma~\ref{lem:flat}.
			
			\item We claim that the complement of the image
			of $\Spec L[y] \to \Spec B$ is just the 
			point $\nn$. By Remark~\ref{rem:class}, the residue field of the 
			crucial maximal ideal $\mm \subseteq A$ is $\kk$
			and thus $\nn$ is a maximal ideal of $B$.
			Let $b \in \nn$. By Remark~\ref{rem:class} we have 
			$\mm = \rad(tA)$ and thus there exists an integer
			$q \ge 1$ such that $b^q \in tA$. Therefore $b^q / t \in A$. 
			Since $b^q / t \in L[y]$, we get 
			$b^q \in tB$. Thus we proved $\nn \subseteq \rad(t B)$.
			If $\pp \subseteq B$ is a prime ideal such that 
			$\pp L[y] = L[y]$, then we get $t \in \pp$ 
			(since $B_t = L[y]$). Thus we have
			$\nn \subseteq \rad(t B) \subseteq \pp$ 
			and by the maximality of $\nn$ we get $\nn = \pp$. 
			
			\item
			Now, we prove that $B/J$ is a maximal subring of $L[y]/J$. Let
			$C \subsetneq L[y]/J$ be a subring that lies over $B/J$.
			Using b), the fact that $J \subseteq \nn$ and 
			that $(B/J)_t = L[y] / J$, we get the
			following commutative diagram
			\[
				\xymatrix{
					& \Spec C \ar[rd]^-{\varphi} \\
					\Spec L[y] / J \ar[r]^-{\simeq} 
					\ar[ru]^-{\not\simeq}
					&  (\Spec B/J ) \setminus \{ \nn \} 
					\ar@{^(->}[r]_-{\textrm{open}} &
					\Spec B / J \, .
				}
			\]
			From this, one can easily deduce that $\varphi$ is surjective.	
			Let $\pp \in \Spec C$ with $\varphi(\pp) = \nn$.
			By a) and Lemma~\ref{lem:ExtendingMaxOfField},
			$(B/J)_{\nn}$ is a maximal subring of $Q(L[y]/J)$. Since $t \in \pp$,
			this implies $(B/J)_{\nn} = C_{\pp}$.
			Hence we have $B/J = C$ by
			\cite[Theorem~4.7]{Ma1986Commutative-ring-t}.
			\end{enumerate}
	\end{enumerate}
	From c) and from Lemma~\ref{lem:maxandcond} it follows that $B$
	is a maximal subring of $L[y]$. From b) it follows that $\nn = \mm \cap L[y]$
	is the crucial maximal ideal of $B$.
\end{proof}	     

\begin{remark}
	\label{rem:CondAndIntersection}
	If the conductor ideal of $A$ in $K[y]$ 
	is non-zero, then there exists $\alpha \in K^+$
	such that this ideal is $(y-\alpha)K[y]$.
	Now, if $L$ is a field and $L \subseteq K$ an algebraic field extension,
	then the conductor of $B = A \cap L[y]$ is the ideal $m_\alpha L[y]$
	where $m_\alpha \in L[y]$ is the minimal polynomial
	of $\alpha$ over $L$.
\end{remark}

In the future we will need the following consequence of the last two propositions.

\begin{corollary} 
\label{cor:BijectiveCorrespondenceAndLocalization}
We have a bijective correspondence
\[ 
	\varphi \colon
	\left\{	
	\begin{array}{c}
	    \textrm{maximal subrings of} \\
	    \textrm{$\kk[t, t^{-1}, y]$ that contain} \\ 
	    \textrm{$\kk[t, y]$}
	\end{array}
	\right\}
	\stackrel{1:1}{\longrightarrow}
	\left\{
	\begin{array}{c}
	    \textrm{maximal subrings of} \\
	    \textrm{$\kk(t)[y]$ that contain} \\
	    \textrm{$\kk(t)^+[y]$}
	\end{array}
	\right\}
\]
given by $\varphi(B) = B_S$ and 
$\varphi^{-1}(A) = A \cap \kk[t, t^{-1}, y]$, where $S$ denotes the multiplicative
subset $\kk[t] \setminus (t)$ of $\kk[t]$.
\end{corollary}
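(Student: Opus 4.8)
The plan is to observe that $\kk(t)[y]$ and $\kk(t)^+[y]$ are nothing but the localizations $S^{-1}\kk[t,t^{-1},y]$ and $S^{-1}\kk[t,y]$, so that the asserted map and its inverse are simply ``localize at $S$'' and ``contract back into $\kk[t,t^{-1},y]$''; all the work is to see that these two operations preserve and detect maximality. (Alternatively one could derive the statement by applying Proposition~\ref{prop:existenceAbove} and Proposition~\ref{prop:interscting} once with $L=\kk[t,t^{-1}]$ and once with $L=\kk(t)$, for a fixed algebraically closed $K$ with the cutoff property; but the localization argument is self-contained.) I would begin by recording the elementary identifications: since $S=\kk[t]\setminus(t)$ one has $\kk(t)^+=\kk[t]_{(t)}=S^{-1}\kk[t]$, hence $S^{-1}\kk[t,y]=\kk(t)^+[y]$ and, inverting $t$ as well, $S^{-1}\kk[t,t^{-1},y]=\kk(t)[y]$; moreover every element of $S$ is a unit of $\kk(t)^+$, so for any subring $A$ with $\kk(t)^+[y]\subseteq A$ we get $S^{-1}A=A$. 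I would also isolate a small ``$s/t$-trick'': if a subring $C\subseteq\kk[t,t^{-1},y]$ contains $\kk[t]$ and $t^{-1}\in S^{-1}C$, then $t^{-1}\in C$ (write $t^{-1}=c/s$ with $c\in C$ and $s=s(0)+tg(t)\in S$, $s(0)\neq0$, $g\in\kk[t]$; then $c=s(0)t^{-1}+g(t)$ with $g(t)\in\kk[t]\subseteq C$, whence $t^{-1}\in C$).

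For $\varphi$ to be well defined, let $B$ be a maximal subring of $\kk[t,t^{-1},y]$ containing $\kk[t,y]$. Since $S\subseteq\kk[t]\subseteq B$, Lemma~\ref{lem:MaxSubringAndLocalization} shows that $S^{-1}B$ is either a maximal subring of $S^{-1}\kk[t,t^{-1},y]=\kk(t)[y]$ or equals it. The latter would give $t^{-1}\in S^{-1}B$, hence $t^{-1}\in B$ by the $s/t$-trick, hence $B=\kk[t,t^{-1},y]$, a contradiction. So $\varphi(B)=S^{-1}B$ is a maximal subring of $\kk(t)[y]$, and it contains $S^{-1}\kk[t,y]=\kk(t)^+[y]$.

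For $\varphi^{-1}$ and the mutual-inverse claim, let $A$ be a maximal subring of $\kk(t)[y]$ containing $\kk(t)^+[y]$ and put $B=A\cap\kk[t,t^{-1},y]$, which contains $\kk(t)^+[y]\cap\kk[t,t^{-1},y]\supseteq\kk[t,y]$. The key identity is $S^{-1}B=A$: the inclusion ``$\subseteq$'' follows from $S^{-1}A=A$ and $S^{-1}\kk[t,t^{-1},y]=\kk(t)[y]$, and ``$\supseteq$'' holds because any $a\in A\subseteq\kk(t)[y]=S^{-1}\kk[t,t^{-1},y]$ is $b/s$ with $b\in\kk[t,t^{-1},y]$, $s\in S\subseteq A$, so $b=sa\in A\cap\kk[t,t^{-1},y]=B$. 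In particular $B\subsetneq\kk[t,t^{-1},y]$ (else $A=S^{-1}B=\kk(t)[y]$). To see $B$ is maximal, take a subring $B\subseteq C\subseteq\kk[t,t^{-1},y]$; then $A=S^{-1}B\subseteq S^{-1}C\subseteq\kk(t)[y]$, so by maximality of $A$ either $S^{-1}C=A$, whence $C\subseteq S^{-1}C\cap\kk[t,t^{-1},y]=B$ and $C=B$, or $S^{-1}C=\kk(t)[y]$, whence $t^{-1}\in S^{-1}C$, so $t^{-1}\in C$ by the trick and $C=\kk[t,t^{-1},y]$. Finally $\varphi(\varphi^{-1}(A))=S^{-1}B=A$, and $\varphi^{-1}(\varphi(B))=S^{-1}B\cap\kk[t,t^{-1},y]\supseteq B$, with equality because a strict inclusion would force this intersection to be $\kk[t,t^{-1},y]$ (by maximality of $B$), giving $t^{-1}\in S^{-1}B$ and then $t^{-1}\in B$, absurd.

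The only real obstacle—and it is a mild one—is the pair of spots where one must rule out that inverting some element of $S$ secretly produces $t^{-1}$; this is precisely what the $s/t$-trick takes care of, and it is what makes ``localize'' and ``contract'' genuinely inverse to one another. Everything else is routine bookkeeping with localizations together with the cited behaviour of maximal subrings under localization.
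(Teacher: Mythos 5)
Your proof is correct, and for the harder half it takes a genuinely different route from the paper. The injectivity direction (localize, intersect back, use maximality of $B$) is essentially the paper's argument, except that you make explicit the point the paper leaves implicit, namely why $B_S\neq \kk(t)[y]$; your ``$s/t$-trick'' is exactly the right justification for that. The real divergence is in showing that $\varphi^{-1}(A)=A\cap\kk[t,t^{-1},y]$ is again a maximal subring: the paper deduces this from Proposition~\ref{prop:existenceAbove} (lifting $A$ via Zorn's Lemma and flatness of $L^+\subseteq K^+$ to a maximal subring of $K[y]$ over the Hahn-type field) followed by Proposition~\ref{prop:interscting} (a fairly involved descent of maximality through conductors and valuations). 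You instead prove the key identity $S^{-1}(A\cap\kk[t,t^{-1},y])=A$ directly and then run the dichotomy forced by maximality of $A$ on $S^{-1}C$ for an intermediate ring $C$, using the $s/t$-trick again to rule out $t^{-1}$ sneaking into $C$. This is more elementary and entirely self-contained: it needs nothing about the cutoff property, admissible pairs, or the classification over $K[y]$, only Lemma~\ref{lem:MaxSubringAndLocalization} and the elementary identifications $S^{-1}\kk[t,y]=\kk(t)^+[y]$ and $S^{-1}\kk[t,t^{-1},y]=\kk(t)[y]$. What the paper's route buys is coherence with the surrounding machinery (Proposition~\ref{prop:interscting} is needed anyway and also tracks conductors and crucial maximal ideals, which the paper uses later, e.g.\ in the proof of Theorem~\ref{thm:Uniqueness}); your route buys a shorter and logically lighter proof of this particular corollary. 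I found no gaps: the identification $\kk(t)^+=\kk[t]_{(t)}$ is correct, $S^{-1}B=A$ is verified in both directions, and every invocation of the $s/t$-trick is applied to a ring that visibly contains $\kk[t]$.
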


\begin{proof}
	Let $B \subsetneq \kk[t, t^{-1}, y]$ be a maximal subring that 
	contains $\kk[t, y]$. 
	By Lemma~\ref{lem:MaxSubringAndLocalization}, the
	localization $B_S$ is a maximal subring of 
	$\kk[t, t^{-1}, y]_S$, since $S = \kk[t] \setminus (t)$. 
	Moreover, we have
	\[
		B \subseteq B_S \cap 
		\kk[t, t^{-1},y] \subsetneq \kk[t, t^{-1}, y]
	\]
	and thus by the maximality of $B$ we get the equality
	$B = B_S \cap \kk[t, t^{-1},y]$. 
	This proves the injectivity of $\varphi$.
	
	Let $A \subsetneq \kk(t)[y]$ be a maximal subring that contains
	$\kk(t)^+[y]$. By Proposition~\ref{prop:existenceAbove} there exists 
	a maximal subring $K^+[y] \subseteq A' \subseteq K[y]$ such that
	$A' \cap \kk(t)[y] = A$. By Proposition~\ref{prop:interscting}, it follows
	that $A \cap \kk[t, t^{-1}, y]$ is a maximal subring of $\kk[t, t^{-1}, y]$.
	Clearly, $A \cap \kk[t, t^{-1}, y]$ contains $\kk[t, y]$.	Moreover,
	\[
		(A \cap \kk[t, t^{-1}, y])_S \subseteq A
	\]
	and by the maximality of $(A \cap \kk[t, t^{-1}, y])_S$ we get equality.
	This proves the surjectivity of $\varphi$.
\end{proof}

%\begin{example}
%	\label{exa:IntroExampleProof}
%	Now, we prove that the introductory Example~\ref{exa:motivatingexample} 
%	are actually maximal subrings of $\kk[t, t^{-1}, y]$.
%\end{example}
	     
\subsection{\texorpdfstring
	{Classification of the maximal subrings of $\kk[t, t^{-1}, y]$ 
	that contain $\kk[t, y]$}
	{Classification of the maximal subrings of k[t, t^{-1}, y] 
	that contain k[t, y]} }	     
\label{subsec:Class}
Throughout this subsection
$\KK$ denotes the algebraic closure of $\kk(t)$ inside the Hahn field $\kk[[t^\QQ]]$.
In this subsection we give a classification of all maximal subrings
of $\kk[t, t^{-1}, y]$ that contain $\kk[t, y]$.

Let $\alpha$ be in $\kk[[t^{\QQ}]]^+$. In this subsection we denote
\[
	A_{\alpha} = \kk[[t^{\QQ}]]^+ + (y-\alpha)\kk[[t^{\QQ}]][y] \, .
\] 
Thus $\alpha \mapsto A_{\alpha}$ is a bijective correspondence
between $\kk[[t^{\QQ}]]^+$ and the maximal subrings of $\kk[[t^{\QQ}]][y]$
that contain $\kk[[t^{\QQ}]]^+[y]$ by Corollary~\ref{cor:HahnClass} 
and Remark~\ref{rem:HahnClass}.

Let $(\QQ/\ZZ)^\ast$ be the group of group homomorphisms 
$\QQ / \ZZ \to \kk^\ast$.
There exists a natural action of this group on the Hahn field, given by the 
homomorphism
\begin{equation}
	\label{eq:OriginalAction}
	(\QQ/\ZZ)^\ast \longrightarrow \Gal(\kk[[t^\QQ]] / \kk((t))) \, , \quad
	\sigma \longmapsto \left( \sum_{s \in \QQ} a_s t^s \mapsto 
	\sum_{s \in \QQ} a_s \sigma(s)t^s \right) \, ,
\end{equation}
where $\Gal(\kk[[t^\QQ]] / \kk((t)))$ denotes the group of field automorphisms
of $\kk[[t^\QQ]]$ that fix the subfield $\kk((t))$ pointwise
(note that $\kk((t)) \subseteq \kk[[t^\QQ]]$ is  a Galois extension if and only
if the characteristic of $\kk$ is zero).
The action $\eqref{eq:OriginalAction}$ commutes with the valuation $\nu$ 
on $\kk[[t^\QQ]]$. In particular 
we have for all $\sigma \in (\QQ/\ZZ)^\ast$ and for all
$\alpha \in \kk[[t^\QQ]]^+$
\[
	A_\alpha \cap \kk[t, t^{-1}, y] = A_{\sigma(\alpha)} \cap \kk[t, t^{-1}, y] \, .
\]
%The same equality holds for $\alpha \in \KK^+$ and 
%$\sigma \in \Gal(\KK / \kk(t))$ if
%$\sigma(\alpha) \in \KK^+$.
%Here and for the rest of this section we denote by $\overline{\kk(t)}$ the
%algebraic closure of $\kk(t)$ inside the Hahn field.
%If in addition $\alpha \in \mathscr{S}$ is algebraic over $\kk(t)$, then the equation
%above even holds for all $\sigma \in \Gal(\overline{\kk(t)}, \kk(t))$ where
%$\overline{\kk(t)}$ denotes the algebraic closure of $\kk(t)$ in the Hahn field.

The following result is the main theorem of this section.

\begin{theorem}
	\label{thm:Uniqueness}
	Let $\mathscr{S}$ be the set of $\alpha \in \kk[[t^\QQ]]^+$
	such that $\supp(\alpha)$ is contained in a strictly increasing sequence
	(see Definition~\ref{def:StrictlyIncreasingSequence}).
	Then we have a bijection
	\begin{align*}
		\Psi \colon 
		\mathscr{S} / (\QQ/\ZZ)^\ast & \longrightarrow
		\left\{
		    \begin{array}{c} 
			\textrm{maximal subrings of} \\ 
			\textrm{$\kk[t, t^{-1}, y]$ that contain $\kk[t, y]$}		    
		   \end{array}
		\right\}
		\, , \ \ \alpha \longmapsto A_\alpha \cap \kk[t, t^{-1}, y] \, .
%		\\
%		\Psi_2 \colon 
%		(\mathscr{S} \setminus \KK^+) / 
%		(\QQ/\ZZ)^\ast & \longrightarrow
%		\left\{
%		    \begin{array}{c} 
%			\textrm{maximal subrings of $\kk[t, t^{-1}, y]$ with} \\ 
%			\textrm{zero conductor that contain $\kk[t, y]$}
%		    \end{array}
%		\right\}
	\end{align*}
	Moreover, $\Psi(\alpha)$ has non-zero conductor in $\kk[t, t^{-1}, y]$
	if and only if $\alpha \in K^+$ where $K$ denotes the algebraic closure
	of $\kk(t)$ inside the Hahn field $\kk[[t^\QQ]]$.
%	given by $\alpha \mapsto A_\alpha \cap \kk[t, t^{-1}, y]$.
\end{theorem}

%\begin{remark}
%	Note that $\Gal(\overline{\kk(t)} / \kk(t))$ does not preserve
%	$\overline{\kk(t)}^+$. By $\overline{\kk(t)}^+ / \Gal(\overline{\kk(t)} / \kk(t))$ 
%	we mean the set-theoretic quotient of 
%	$\overline{\kk(t)}^+$ by the following equivalence relation: $\alpha \sim \alpha'$
%	if and only if there exists $\sigma \in \Gal(\overline{\kk(t)} / \kk(t))$ such that
%	$\alpha' = \sigma(\alpha)$.
%\end{remark}

For the proof %of Theorem~\ref{thm:Uniqueness}
we need some preparation. First, we reformulate
the action of $(\QQ/\ZZ)^\ast$
on the Hahn field. Let $\kk(t^\QQ)$ be the subfield of the Hahn field generated
by the ground field $\kk$ and the elements $t^s$, $s \in \QQ$.
Then, $(\QQ/\ZZ)^\ast$ is isomorphic to the group
$\Gal(\kk(t^\QQ) / \kk(t))$ of field automorphisms of $\kk(t^\QQ)$ that fix
$\kk(t)$ pointwise. An isomorphism is given by
\[
	(\QQ/\ZZ)^\ast \longrightarrow \Gal(\kk(t^\QQ) / \kk(t)) \, , \quad
	\sigma \longmapsto \left( t^s \mapsto \sigma(s) t^s \right) \, ,
\]
and the homomorphism \eqref{eq:OriginalAction} identifies then 
under this isomorphism with 
\[
	\Gal(\kk(t^\QQ) / \kk(t)) \longrightarrow \Gal(\kk[[t^\QQ]] / \kk((t))) \, , 
	\quad
	\varphi \longmapsto \left( \sum_{s \in \QQ} a_s t^s \mapsto 
	\sum_{s \in \QQ} a_s \varphi(t^s) \right)
\]
(note that $\varphi(t^s)$ is a multiple of $t^s$ with some element of $\kk^\ast$).
For proving the injectivity of the map $\Psi$ in Theorem~\ref{thm:Uniqueness} 
we need two lemmas.

\begin{lemma}
\label{lem:wiggling} 
Let $q \in \QQ_{\geq 0}$ and let $\alpha$, $\alpha' \in \kk[[t^\QQ]]^+$.
Assume that we have decompositions
\[
	\alpha=\alpha_0+\alpha_1 \, , \quad \alpha'=\alpha_0+ct^q+\alpha_1'
	\quad \textrm{with} \quad 
	\textrm{$\alpha_0$, $\alpha_1$, $\alpha_1' \in \kk[[t^\QQ]]$} \, , \quad
	\textrm{$c \in \kk$}
\]
such that
\[
	\supp(\alpha_0)\subseteq [0, q] \, , \quad
	\supp(\alpha_1) \, , \ \supp(\alpha_1') \subseteq (q,\infty) \, , \quad
	\textrm{$\supp(\alpha_0)$ is finite} \, .
\] 
If $\nu(f(\alpha))=\nu(f(\alpha'))$ for all $f\in \kk(t)[y]$, then 
$\alpha_0+ct^q=\sigma(\alpha_0)$ for some 
$\sigma \in \Gal(\kk(t^\QQ) / \kk(t))$.
\end{lemma}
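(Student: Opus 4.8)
The plan is to extract the datum $\alpha_0 + ct^q$ from the valuation function $f \mapsto \nu(f(\alpha))$ and compare it with $\alpha_0$, using the hypothesis that these two valuation functions agree on $\kk(t)[y]$. First I would observe that $\alpha_0 = \sum_{i} a_i t^{q_i}$ is a finite $\kk$-linear combination of rational powers of $t$, so after choosing a common denominator $n$ for all exponents in $\supp(\alpha_0) \cup \{q\}$, both $\alpha_0$ and $\alpha_0 + ct^q$ lie in the subfield $\kk(t^{1/n})$, which is a finite Galois extension of $\kk(t)$ with group a quotient of $(\QQ/\ZZ)^\ast = \Gal(\kk(t^\QQ)/\kk(t))$. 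Let $m_{\alpha_0} \in \kk(t)[y]$ be the minimal polynomial of $\alpha_0$ over $\kk(t)$; its roots are exactly the Galois conjugates $\tau(\alpha_0)$ for $\tau$ ranging over (coset representatives of) $\Gal(\kk(t^{1/n})/\kk(t))$, and each such $\tau$ extends to an element of $\Gal(\kk(t^\QQ)/\kk(t))$.

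The key computation is to evaluate $m_{\alpha_0}$ at $\alpha$ and at $\alpha'$ and read off the valuation. Writing $m_{\alpha_0}(y) = \prod_\tau (y - \tau(\alpha_0))$, we get
\[
	\nu(m_{\alpha_0}(\alpha)) = \sum_\tau \nu\bigl(\alpha - \tau(\alpha_0)\bigr)
	\quad \text{and} \quad
	\nu(m_{\alpha_0}(\alpha')) = \sum_\tau \nu\bigl(\alpha' - \tau(\alpha_0)\bigr) \, .
\]
Since $\alpha = \alpha_0 + \alpha_1$ with $\supp(\alpha_1) \subseteq (q,\infty)$, the term $\tau = \id$ contributes $\nu(\alpha_1) > q$, while every nontrivial conjugate $\tau$ has $\tau(\alpha_0) \neq \alpha_0$, so $\alpha - \tau(\alpha_0) = (\alpha_0 - \tau(\alpha_0)) + \alpha_1$ has valuation equal to $\nu(\alpha_0 - \tau(\alpha_0)) \le q$ (the leading term of $\alpha_0 - \tau(\alpha_0)$ survives because $\alpha_1$ is supported strictly above $q$, and $\supp(\alpha_0) \subseteq [0,q]$). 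So the contribution of the nontrivial conjugates is the same for $\alpha$ and for $\alpha'$ except possibly for those $\tau$ with $\nu(\alpha_0 - \tau(\alpha_0)) = q$ — but here one must be slightly careful, and this is the point where the more refined argument below enters. For $\alpha'$, the $\tau = \id$ term gives $\nu(ct^q + \alpha_1') = q$ if $c \neq 0$, or $\nu(\alpha_1') > q$ if $c = 0$.

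The heart of the argument, and the step I expect to be the main obstacle, is to show that if $\alpha_0 + ct^q$ were \emph{not} a Galois conjugate of $\alpha_0$, then some polynomial in $\kk(t)[y]$ would detect the difference in valuation. The clean way to do this: consider the minimal polynomial $m$ of $\alpha_0 + ct^q$ over $\kk(t)$ (again a finite computation, as $\alpha_0 + ct^q \in \kk(t^{1/n})$). Evaluating $m$ at $\alpha'$ gives a strictly larger valuation than evaluating it at $\alpha$ — because $\alpha_0 + ct^q$ is a root of $m$ and $\alpha' - (\alpha_0 + ct^q) = \alpha_1'$ has valuation $> q$, whereas $\alpha - (\text{any root of } m)$ has valuation $\le q$ unless that root equals $\alpha_0$. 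Thus $\nu(m(\alpha)) = \nu(m(\alpha'))$ forces $\alpha_0$ to be among the roots of $m$, i.e. $\alpha_0$ and $\alpha_0 + ct^q$ have the same minimal polynomial over $\kk(t)$, hence are Galois conjugate: $\alpha_0 + ct^q = \sigma(\alpha_0)$ for some $\sigma \in \Gal(\kk(t^\QQ)/\kk(t))$. The delicate bookkeeping is in comparing the valuations of $\alpha - \rho$ for the various roots $\rho$ of $m$ with $\nu(\alpha_1') > q$, making sure that the finiteness of $\supp(\alpha_0)$ and the support conditions on $\alpha_1, \alpha_1'$ are used to guarantee no unexpected cancellation lowers a valuation; I would handle this by comparing leading terms and invoking $\supp(\alpha_0 + ct^q) \subseteq [0,q]$ together with $\supp(\alpha_1') \subseteq (q,\infty)$.
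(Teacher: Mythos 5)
Your proposal is correct and follows essentially the same route as the paper: the paper tests with the minimal polynomial of $\alpha_0$ and derives $\nu(m_0(\alpha)) > \nu(m_0(\alpha'))$ under the contradiction hypothesis, while you test with the minimal polynomial of $\alpha_0 + ct^q$ and derive the mirror inequality $\nu(m(\alpha)) < \nu(m(\alpha'))$; in both cases the substance is the normality of $\kk(t) \subseteq \kk(t^{\QQ})$, the factorization of the minimal polynomial into linear factors over the conjugates, and the term-by-term valuation comparison using the support conditions (your "delicate bookkeeping" goes through exactly as you describe, since for every root $\rho$ other than $\alpha_0$ and $\alpha_0+ct^q$ one has $\nu(\alpha-\rho)=\nu(\alpha_0-\rho)=\nu(\alpha_0+ct^q-\rho)=\nu(\alpha'-\rho)$). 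One small correction: in positive characteristic $\kk(t^{1/n})/\kk(t)$ is only normal, not Galois, so the factorization must be written with multiplicities as in the paper's $m_0=(y-\beta_0)^{k_0}\cdots(y-\beta_r)^{k_r}$ — this does not affect the comparison.
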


\begin{proof} 
Let $m_0 \in \kk(t)[y]$ be the minimal polynomial of $\alpha_0$ over $\kk(t)$. 
Note that $\alpha_0$ is algebraic over $\kk(t)$ since the support of $\alpha_0$ 
is a finite set.
Denote by $\alpha_0 = \beta_0, \ldots, \beta_r$ the different elements
of the set  
\[
	\{ \, \sigma(\alpha_0) \ | \ \sigma \in \Gal(\kk(t^\QQ)/\kk(t)) \, \} \, .
\]
As the field extension $\kk(t)Ê\subseteq \kk(t^\QQ)$ is normal,
there exist integers $k_0 > 0$ and $k_1, \ldots, k_r \geq 0$ such that
\[
	m_0 =(y-\beta_0)^{k_0}(y-\beta_1)^{k_1} \cdots 
	(y-\beta_r)^{k_r} \, ,
\]
see \cite[Theorem~3.20]{Mo1996Field-and-Galois-t}.
Assume towards a contradiction that 
$\alpha_0+ct^q\not =\beta_j$ for all $0\leq j \leq r$. Let $i$
be an integer with $1 \leq i \leq r$. 
Since $\nu(\alpha_0 - \beta_i) \leq q$, we get
\begin{eqnarray*}
	\nu(\alpha_1 + \alpha_0 - \beta_i) &=& 
	\nu(\alpha_0 - \beta_i) \\
	&=& \nu(ct^q + \alpha_0 - \beta_i) \\
	&=& \nu(\alpha'_1 + ct^q + \alpha_0 - \beta_i)
\end{eqnarray*}
where we used in the second and third equality the fact that 
$ct^q + \alpha_0  \neq \beta_i$. Since 
$\alpha_0 = \beta_0 \neq \alpha_0 + ct^q$, the constant $c$ is non-zero.
Thus we have $\nu(\alpha_1) > \nu(\alpha'_1 + ct^q)$. In summary we get
\begin{eqnarray*}
\nu(m_0(\alpha)) &=& k_0 \nu(\alpha_1) + 
			\sum_{i \neq 0} k_i \nu(\alpha_1+\alpha_0-\beta_i) \\
		    &>& k_0 \nu(\alpha'_1+ ct^q) + \sum_{i \neq 0} 
		    	k_i \nu(\alpha_1'+ct^q+\alpha_0-\beta_i) \ \ = \ \ 
			\nu(m_0(\alpha'))
\end{eqnarray*}
and thus we arrive at a contradiction.
\end{proof}

\begin{lemma} 
\label{lem:ForInjectivity}
Let $\alpha$, $\alpha' \in \kk[[t^{\QQ}]]^+$ and assume that
$\supp(\alpha)$, $\supp(\alpha')$ are contained in strictly increasing sequences
(see Definition~\ref{def:StrictlyIncreasingSequence}).
Then $\nu(f(\alpha))=\nu(f(\alpha'))$ for all $f\in \kk(t)[y]$ if and only if there 
exists $\sigma \in \Gal(\kk(t^\QQ) / \kk(t))$ 
such that $\alpha'=\sigma(\alpha)$.
\end{lemma}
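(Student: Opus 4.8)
The easy implication: if $\alpha' = \sigma(\alpha)$ with $\sigma \in \Gal(\kk(t^\QQ)/\kk(t))$, extend $\sigma$ to $\Gal(\kk[[t^\QQ]]/\kk((t)))$ via~\eqref{eq:OriginalAction}; then $f(\sigma(\alpha)) = \sigma(f(\alpha))$ for $f \in \kk(t)[y]$ (as $\sigma$ fixes the coefficients of $f$) and $\sigma$ commutes with $\nu$ (it only rescales coefficients by units of $\kk$), so $\nu(f(\alpha')) = \nu(f(\alpha))$. For the converse, assume $\nu(f(\alpha)) = \nu(f(\alpha'))$ for all $f \in \kk(t)[y]$. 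If $\supp(\alpha)$ (or, by symmetry, $\supp(\alpha')$) is finite, then $\alpha$ lies in $\kk(t^\QQ)$ and is algebraic over $\kk(t)$; applying the hypothesis to the minimal polynomial $m \in \kk(t)[y]$ of $\alpha$ gives $\nu(m(\alpha')) = \nu(m(\alpha)) = \infty$, so $\alpha'$ is a root of $m$. Since $\kk(t) \subseteq \kk(t^\QQ)$ is normal (as recalled in the proof of Lemma~\ref{lem:wiggling}), the roots of $m$ all lie in $\kk(t^\QQ)$ and form a single $\Gal(\kk(t^\QQ)/\kk(t))$-orbit, so $\alpha' = \sigma(\alpha)$ for some such $\sigma$.

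Assume now that $\supp(\alpha) = \{q_1 < q_2 < \ldots\}$ and $\supp(\alpha')$ are infinite. Using the identification of $\Gal(\kk(t^\QQ)/\kk(t))$ with the group of characters $\QQ/\ZZ \to \kk^\ast$ recorded before Lemma~\ref{lem:wiggling} --- where $\sigma_\chi$ scales the coefficient of $t^s$ by $\chi(s \bmod \ZZ)$, hence preserves supports, commutes with $\nu$, and leaves $\nu(f(\,\cdot\,))$ unchanged for $f \in \kk(t)[y]$ --- the plan is to build, by induction on $k \ge 1$ (with $\sigma_{\chi_0} = \id$ and the conditions vacuous for $k=0$), characters $\chi_k$ such that $\sigma_{\chi_k}(\alpha)$ agrees with $\alpha'$ on $[0, q_k]$, such that $\supp(\alpha') \cap [0, q_k] = \{q_1, \ldots, q_k\}$, and such that $\chi_{k+1}$ and $\chi_k$ agree at $q_1, \ldots, q_k$. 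Granting this, the $\chi_k$ stabilize, at $q_i$, to the value $\alpha'_{q_i}/\alpha_{q_i}$, defining a homomorphism on the subgroup of $\QQ/\ZZ$ generated by the $q_i \bmod \ZZ$; since $\kk^\ast$ is divisible, this extends to a character $\chi$ of $\QQ/\ZZ$. Moreover $\supp(\alpha') = \bigcup_k(\supp(\alpha') \cap [0, q_k]) = \{q_1, q_2, \ldots\}$: a further element of $\supp(\alpha')$ would exceed every $q_k$, hence be $\ge \sup_i q_i$, and when this supremum is finite that would give $\supp(\alpha')$ a subset of order type bigger than $\omega$, contradicting that $\supp(\alpha')$ lies in a strictly increasing sequence. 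Then $\sigma_\chi(\alpha)$ and $\alpha'$ have the same support and coefficients, so $\sigma_\chi(\alpha) = \alpha'$.

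The inductive step is where Lemma~\ref{lem:wiggling} enters. Set $\tilde\alpha = \sigma_{\chi_k}(\alpha)$; it has support $\{q_i\}$, agrees with $\alpha'$ on $[0, q_k]$, and satisfies $\nu(f(\tilde\alpha)) = \nu(f(\alpha'))$ for all $f$. Let $q$ be the least element of $\supp(\tilde\alpha) \cup \supp(\alpha')$ exceeding $q_k$. Because $q$ is least, $\tilde\alpha$ and $\alpha'$ agree on $[0, q)$, so one may write $\tilde\alpha = \tilde\alpha_{\le q} + (\tilde\alpha - \tilde\alpha_{\le q})$ and $\alpha' = \tilde\alpha_{\le q} + c\, t^q + (\alpha' - \alpha'_{\le q})$ with $c \in \kk$, the common part $\tilde\alpha_{\le q}$ having finite support inside $[0, q]$ and the remaining parts having support in $(q, \infty)$; thus Lemma~\ref{lem:wiggling} applies and yields $\tau = \sigma_{\chi_\tau} \in \Gal(\kk(t^\QQ)/\kk(t))$ with $\alpha'_{\le q} = \tau(\tilde\alpha_{\le q})$. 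Comparing supports (which $\tau$ preserves) forces $q \in \supp(\tilde\alpha)$, i.e. $q = q_{k+1}$, whence $\supp(\alpha') \cap [0, q_{k+1}] = \{q_1, \ldots, q_{k+1}\}$; comparing coefficients at $q_1, \ldots, q_k$ (where $\tilde\alpha$ and $\alpha'$ already agree, with nonzero values) shows $\chi_\tau(q_i) = 1$ for $i \le k$, so $\chi_{k+1} := \chi_\tau \cdot \chi_k$ meets all requirements. I expect the main obstacle to lie in this combined bookkeeping: choosing the exponent $q$ so that no ``new'' exponent of $\alpha'$ can slip in strictly between consecutive $q_i$, gluing the successive $\tau$'s into one character, and disposing of the case $\sup_i q_i < \infty$ (where $\alpha$ is transcendental but its support accumulates) --- which is exactly what the order-type argument in the previous paragraph handles.
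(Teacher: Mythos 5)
Your proof is correct and follows essentially the same route as the paper's: an induction over the support of $\alpha$ in which Lemma~\ref{lem:wiggling} supplies the correcting automorphism at each step, followed by gluing the resulting compatible partial data into a single $\sigma$. The only differences are organizational --- you treat the finite-support case separately via minimal polynomials, you establish $\supp(\alpha')=\supp(\alpha)$ inside the induction rather than fixing a common indexing sequence for both elements at the outset (arguably cleaner bookkeeping), and you glue via divisibility of $\kk^\ast$ rather than via normality of $\kk(t)\subseteq\kk(t^\QQ)$.
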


\begin{proof} 
Assume that $\nu(f(\alpha))=\nu(f(\alpha'))$ for all $f\in \kk(t)[y]$.
By assumption, there exists a strictly increasing sequence
$0 < s_1 < s_2 < \ldots$ in $\QQ$ such that
\[
	\alpha = a_0 + \sum_{j = 1}^{\infty} a_{s_j} t^{s_j} \quad \textrm{and} \quad
	\alpha' = a'_0 + \sum_{j = 1}^{\infty} a'_{s_j} t^{s_j} \, .
\]
For $i \geq 1$ let
\[
	\alpha_i = a_0 + \sum_{j = 1}^{i-1} a_{s_j} t^{s_j} \quad \textrm{and} \quad
	\alpha'_i = a'_0 + \sum_{j = 1}^{i-1} a'_{s_j} t^{s_j} \, .
\]
We define inductively
$\sigma_1, \sigma_2, \ldots \in \Gal(\kk(t^\QQ) / \kk(t))$ such
that $\sigma_i(\alpha_i) = \alpha'_i$. 

Since 
$\nu(\alpha-a_0) = \nu(\alpha' -a_0)$ by assumption, 
it follows that $a'_0 = a_0$.
Hence $\sigma_1 = \id$ satisfies $\sigma_1(\alpha_1) = \alpha'_1$.
Assume that $\sigma_i \in \Gal(\kk(t^\QQ) / \kk(t))$ with 
$\sigma_i(\alpha_i) = \alpha'_i$ is already constructed. 
For all $f \in \kk(t)[y]$ we have
\[
	\nu(f(\sigma_i(\alpha))) = \nu(\sigma_i(f(\alpha))) = 
	\nu(f(\alpha))= \nu(f(\alpha')) \, .
\]
Since $\alpha'_i = \sigma_i(\alpha_i)$, Lemma~\ref{lem:wiggling} implies that
there exists $\varphi \in \Gal(\kk(t^\QQ) / \kk(t))$ such that 
$\alpha'_{i+1} = \varphi(\sigma_i(\alpha_{i+1}))$. Thus we can define
$\sigma_{i+1} = \varphi \circ \sigma_i$.

By construction, $\sigma_{i+1}$ and $\sigma_i$ coincide on the field
\[
 	K_i = \kk \left( \{ \, t^s \ | \ s \in \supp(\alpha_i) \, \} \right) \, .
\]
Thus we get a well defined automorphism of the field
$\bigcup_{i=0}^{\infty} K_i$ that restricts to $\sigma_i$ on $K_i$. 
By the normality of the extension $\kk(t) \subseteq \kk(t^\QQ)$ we 
we can extend this automorphism
to an automorphism $\sigma$ of $\kk(t^\QQ)$ and we have 
$\sigma(\alpha) = \alpha'$ (see \cite[Theorem~3.20]{Mo1996Field-and-Galois-t}).

The converse of the statement is clear.
%Assume that $\nu(f(\alpha))=\nu(f(\alpha'))$ for all $f\in \kk(t)[y]$.
%Suppose that $\alpha = \alpha_0 + \alpha_1$ and 
%$\alpha' = \alpha_0 + ct^q + \alpha'_1$ where $q \in \QQ_{\geq 0}$,
% $\supp(\alpha_0) \subseteq [0, q]$ and
% $\supp(\alpha_1), \supp(\alpha'_1) \subseteq (q, \infty)$  
% ($\alpha_0 = 0$ is also possible). By the previous lemma there exists 
% $\sigma_0 \in \Gal(\kk(t^\QQ) / \kk(t))$ with 
% $\alpha_0 + ct^q = \sigma_0(\alpha_0)$. 
% An inductive use of the previous lemma
% yields an admissible sequence of elements 
% $\beta_0, \beta_1, \ldots$ and automorphisms
% $\sigma_0, \sigma_1, \ldots \in \Gal(\kk(t^\QQ) / \kk(t))$
% such that $\alpha$ is the limit of the $\beta_i$
% and such that $\alpha'$ is the limit of $\sigma_i(\beta_i)$ 
% (here we use the fact, that $\supp(\alpha)$ and $\supp(\alpha')$
% are strictly increasing sequences).
% Since $\sigma_{i+1}(\beta_i) = \sigma_i(\beta_i)$ for all $i$, we get that
% $\sigma_{i+1}$ and $\sigma_i$ coincide on the field
% \[
% 	K_i = \kk \left( \{ \, t^s \ | \ s \in \supp(\beta_i) \, \} \right) \, .
% \]
% Thus we get a well defined automorphism of 
% $\bigcup_{i=0}^{\infty} K_i$ that restricts to $\sigma_i$ on $K_i$. Now, 
% we can extend this automorphism
% to an automorphism $\sigma$ of $\kk(t^\QQ)$ and we have 
% $\sigma(\alpha) = \alpha'$.
\end{proof}
	     
\begin{proof}[Proof of Theorem~\ref{thm:Uniqueness}]
%	Before Theorem~\ref{thm:Uniqueness},
%	we already discussed that the maps are well-defined and surjective.
	Consider the bijections
	\begin{align*}
		\Xi_1 \colon \KK^+ & \ \longrightarrow \
		\left\{
		    \begin{array}{c} 
			\textrm{maximal subrings of $\KK[y]$ with} \\ 
			\textrm{non-zero conductor that contain $\KK^+[y]$}
		   \end{array}
		\right\} \\
		\Xi_2 \colon \mathscr{S} \setminus \KK^+ & \ \longrightarrow \
		\left\{
		    \begin{array}{c} 
			\textrm{maximal subrings of $\KK[y]$ with} \\ 
			\textrm{zero conductor that contain $\KK^+[y]$}
		    \end{array}
		\right\} \, .
	\end{align*}	
	of Proposition~\ref{prop:AlgClorClass}. For $\alpha \in \KK^+$
	and $\beta \in \mathscr{S} \setminus \KK^+$ we have
	\[
		\Xi_1(\alpha) \cap \kk[t, t^{-1}, y] = \Psi(\alpha) \quad
		\textrm{and} \quad
		\Xi_2(\beta) \cap \kk[t, t^{-1}, y] = \Psi(\beta) \, .
	\]
	Using Proposition~\ref{prop:interscting} and 
	Remark~\ref{rem:CondAndIntersection} we see that
	$\Xi_1(\alpha) \cap \kk[t, t^{-1}, y]$ is a maximal subring of 
	$\kk[t, t^{-1}, y]$ with non-zero
	conductor and $\Xi_2(\beta) \cap \kk[t, t^{-1}, y]$ is a maximal subring of 
	$\kk[t, t^{-1}, y]$
	with zero conductor. Thus $\Psi$ is a well-defined map.
	Using Proposition~\ref{prop:existenceAbove}, we see that 
	$\Psi$ is surjective.
	
	For proving the injectivity, let $\alpha_1, \alpha_2 \in \mathscr{S}$ 
	such that the rings $A_{\alpha_1} \cap \kk[t, t^{-1}, y]$, 
	$A_{\alpha_2} \cap \kk[t, t^{-1}, y]$ are the same subsets of $\kk[t, t^{-1}, y]$.
	For $i=1, 2$, $A_{\alpha_i} \cap \kk(t)[y]$
	is a maximal subring of $\kk(t)[y]$, see Proposition~\ref{prop:interscting}.
	By Corollary~\ref{cor:BijectiveCorrespondenceAndLocalization}, we get 
	the equality
	\[
		A_{\alpha_1} \cap \kk(t)[y] = A_{\alpha_2} \cap \kk(t)[y] \, .
	\]
	Let $B = A_{\alpha_1} \cap \kk(t)[y] = A_{\alpha_2} \cap \kk(t)[y]$. 
	Let $\nn$ be the crucial maximal ideal of $B$
	and let $J$ be the conductor ideal of $B$ in $\kk(t)[y]$.
%	If $J \neq 0$, it follows that $J$ is the principal ideal generated by 
%	the minimal polynomial of $\alpha_i \in K^+$ over $\kk(t)$, see 
%	Remark~\ref{rem:CondAndIntersection} and 
%	Proposition~\ref{prop:interscting}. In particular, the minimal polynomial
%	of $\alpha_1$ and of $\alpha_2$ over $\kk(t)$ are the same. 
	With Remark~\ref{rem:valuation} we get for $i=1, 2$ 
	\[
		B = \{ \, f \in \kk(t)[y] \ | \ \omega_i(\bar{f}) \geq 0 \, \} \quad \textrm{and}
		\quad \nn = \{ \, f \in \kk(t)[y] \ | \ \omega_i(\bar{f}) > 0 \, \} \, ,
	\]
	where $\bar{f}$ denotes the residue class modulo $J$
	and $\omega_i$ denotes the valuation
	\[
		\omega_i \colon Q(\kk(t)[y]/J) \longrightarrow \QQ \, , \quad 
		g \longmapsto \nu(g(\alpha_i)) \, .		
	\]
	By~\cite[Proposition 3.3]{FeOl1970Homomorphisms-mini},
	$(B/J)_{\nn}$ is a one-dimensional valuation ring of the field $Q(\kk(t)[y]/J)$
	and therefore it is a maximal subring of $Q(\kk(t)[y]/J)$, 
	see Lemma~\ref{lem:ExtendingMaxOfField}. The description above of $B$
	and $\nn$ implies that $(B/J)_{\nn}$ is the valuation ring with respect to
	$\omega_1$ and with respect to $\omega_2$. Therefore, the valuations 
	$\omega_1$, $\omega_2$ are the same up to an order preserving
	isomorphism of $(\QQ, +, <)$. However, since 
	$\omega_1(t) = 1 = \omega_2(t)$, these
	valuations must then be the same. Thus by Lemma~\ref{lem:ForInjectivity} 
	there exists $\sigma \in \Gal(\kk(t^\QQ) / \kk(t))$ such that
	$\alpha_1 = \sigma(\alpha_2)$. This proves the injectivity of $\Psi$.
\end{proof}     

\section{\texorpdfstring
	{Classification of the maximal $\kk$-subalgebras of $\kk[t, t^{-1}, y]$}
	{Classification of the maximal k-subalgebras of k[t, t^{-1}, y]} }
\label{classOfk[tt-1y].sec}	      
	     
The goal of this section is to classify all maximal $\kk$-subalgebras of 
$\kk[t, t^{-1}, y]$. In fact, we reduce this problem in this section 
to another classification result, which we will solve then in the next section. 
	      
\begin{proposition}
	\label{prop:classOfk[tt-1y]}
	Let $A \subseteq \kk[t, t^{-1}, y]$ be an extending 
	maximal $\kk$-subalgebra. Then, exactly one of the following cases occur:
	\begin{enumerate}[i)]
	\item There exists and automorphism $\sigma$ of $\kk[t, t^{-1}, y]$ 
		such that $\sigma(A)$ contains $\kk[t, y]$;
	\item $A$ contains $\kk[t, t^{-1}]$.
	\end{enumerate}
\end{proposition}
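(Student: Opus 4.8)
The plan is to pin down where the crucial maximal ideal $\mm$ of $A$ sits relative to the unit $t$, and, in the generic situation, to straighten the picture by an automorphism of the form $y\mapsto t^{-k}y$. First, since $A\subsetneq R:=\kk[t,t^{-1},y]$ is extending maximal, it satisfies property $P_2$ in $R$ (Lemma~\ref{lem:PropertyP_2ForExtMax}); applying $P_2$ to $t\cdot t^{-1}=1\in A$ shows $t\in A$ or $t^{-1}\in A$, so after composing with the automorphism $t\mapsto t^{-1}$ I may assume $t\in A$. Because $\Spec(R)\to\Spec(A)$ is an open immersion onto $\Spec(A)\setminus\{\mm\}$, one has $\mathrm{Frac}(A)=\mathrm{Frac}(R)=\kk(t,y)$, and every maximal ideal of $A$ other than $\mm$ is the contraction of a maximal ideal of $R$, none of which contains the unit $t$; hence if $t\notin\mm$ then $t\in A^{\ast}$ and $A\supseteq\kk[t,t^{-1}]$, which is case ii). I would then dispose of mutual exclusivity: if simultaneously $A\supseteq\kk[t,t^{-1}]$ and $\sigma(A)\supseteq\kk[t,y]$ for some automorphism $\sigma$, then $t\in A^{\ast}$, so $\sigma(t)=ct^{m}$ with $m\neq0$ is a unit of $\sigma(A)$, which together with $\kk[t,y]\subseteq\sigma(A)$ forces $t^{-1}\in\sigma(A)$ and hence $\sigma(A)=R$, a contradiction. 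From now on $t\in\mm$, and the task is to produce the automorphism of case i).

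Next I would bring in the conductor $\mathfrak c\subseteq R$ of $A$ in $R$: it is a prime ideal contained in $\mm$, and it cannot be maximal (else $R/\mathfrak c=\kk$ and Lemma~\ref{lem:maxandcond} would make $A/\mathfrak c=\kk$ a proper maximal subring of $\kk$). The heart of the proof is to show that, modulo $\mathfrak c$, the subring $A$ is cut out by a single rank‑one valuation $v$ with $v(t)>0$. If $\mathrm{ht}(\mathfrak c)=1$, then $\mathfrak c=(g)$ for a prime element $g$ of the UFD $R$ and $A=\pi^{-1}(A/\mathfrak c)$ for $\pi\colon R\to R/\mathfrak c=\OOO(C)$, where $C=V(g)$ is an integral affine curve; using Lemma~\ref{lem:maxandcond} together with the fact that $\Spec(R/\mathfrak c)\to\Spec(A/\mathfrak c)$ is the restriction of $\Spec(R)\to\Spec(A)$ (so it still misses exactly one point), $A/\mathfrak c$ is an extending maximal subring of $\OOO(C)$ with crucial ideal $\mm/\mathfrak c$, and Theorem~\ref{thm:OneDimensional} gives $A/\mathfrak c=\OOO(C)\cap\OOO_{\overline X,p}$ for a smooth boundary point $p$ of a suitable projective closure $\overline X\supseteq C$; I take $v$ to be the valuation of the DVR $\OOO_{\overline X,p}$, acting on $\kk(C)$. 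If $\mathrm{ht}(\mathfrak c)=0$, then by \cite[Proposition~3.3]{FeOl1970Homomorphisms-mini} the localization $A_{\mm}$ is a one‑dimensional, hence rank‑one ($\RR$‑valued), valuation ring of $\kk(t,y)$ with valuation $v$, and $A=R\cap A_{\mm}$ by maximality (otherwise $R\subseteq A_{\mm}$ would put $\mm$ in the image of $\Spec(R)\to\Spec(A)$). In both cases $v(t)>0$, since $t\in\mm$ (in the curve case $t$ is read modulo $\mathfrak c$).

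Now I would conclude uniformly. If $\mathfrak c=(y)$ up to a unit, then $y\in\mathfrak c\subseteq A$, so $\kk[t,y]\subseteq A$ and $\sigma=\id$ works. Otherwise $v(y)$ is a well‑defined value (in the curve case $\bar y\neq0$ because $g$ and $y$ are non‑associate primes); since $v$ is rank one and $v(t)>0$ there is $k\in\ZZ_{\ge0}$ with $kv(t)+v(y)\ge0$, i.e. $v(t^{k}y)\ge0$, and therefore $t^{k}y\in A$ --- when $\mathrm{ht}(\mathfrak c)=1$ because $\overline{t^{k}y}\in\OOO(C)\cap\OOO_{\overline X,p}=A/\mathfrak c$ and $A=\pi^{-1}(A/\mathfrak c)$, and when $\mathfrak c=0$ because $t^{k}y\in R\cap A_{\mm}=A$. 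Then the automorphism $\sigma$ of $R$ with $\sigma(t)=t$ and $\sigma(y)=t^{-k}y$ carries the elements $t,\,t^{k}y\in A$ to $t$ and $y$, so $\sigma(A)\supseteq\kk[t,y]$, establishing case i).

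I expect the main obstacle to be the structural step of the second paragraph: showing that, up to the conductor, $A$ is governed by a rank‑one valuation $v$ with $v(t)>0$. The delicate points are excluding a maximal conductor, checking that $A/\mathfrak c$ remains extending so that Theorem~\ref{thm:OneDimensional} is applicable (and correctly transporting the crucial ideal), obtaining $A=R\cap A_{\mm}$ in the zero‑conductor case, and isolating the degenerate case $\mathfrak c=(y)$. After that, producing the automorphism is just the one‑line ceiling‑function choice of $k$, and it is precisely the rank‑one‑ness of $v$ --- a DVR on $C$, or an honest one‑dimensional valuation ring of $\kk(t,y)$ --- that guarantees such a $k$ exists, which is why the conductor dichotomy seems unavoidable.
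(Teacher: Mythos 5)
Your proof is correct, and its skeleton coincides with the paper's: use property $P_2$ on $t\cdot t^{-1}=1$ to get $t\in A$ after an automorphism, produce an integer $k\ge 0$ with $t^k y\in A$, and twist by $y\mapsto t^{-k}y$. The difference lies in how you obtain $t^ky\in A$, and here the paper is dramatically shorter: once one knows $t\in A$ and $t^{-1}\notin A$ (which holds as soon as we are not in case ii), since $t^{-1}\in A$ together with $t\in A$ would give $\kk[t,t^{-1}]\subseteq A$), the maximality of $A$ forces $A[t^{-1}]=A_t=\kk[t,t^{-1},y]$, so $y=a/t^k$ for some $a\in A$ and some $k\ge 0$, i.e.\ $t^ky\in A$. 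This renders your entire second and third paragraphs --- the conductor dichotomy, the reduction to the curve $V(\mathfrak{c})$ via Theorem~\ref{thm:OneDimensional}, and the rank-one valuation with $v(t)>0$ --- unnecessary: what that machinery ultimately buys you (archimedeanness of the value group, hence the existence of $k$) is exactly what the one-line localization argument gives for free. Your detour does appear to be sound (the conductor is prime, non-maximal and contained in $\mm$; $A=\pi^{-1}(A/\mathfrak{c})$ in the height-one case and $A=R\cap A_{\mm}$ in the zero-conductor case; and $v(t)>0$ in both cases), so I am not flagging a gap, only a substantial amount of avoidable work --- though you should at least record the short argument for $\mathfrak{c}\subseteq\mm$, which you assert without proof. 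On the other hand, you explicitly verify that cases i) and ii) are mutually exclusive, via the observation that $\sigma(t)=ct^{\pm 1}$ for any automorphism $\sigma$ of $\kk[t,t^{-1},y]$, which together with $\kk[t,y]\subseteq\sigma(A)$ would force $t^{-1}\in\sigma(A)$ and hence $\sigma(A)=\kk[t,t^{-1},y]$; the paper's proof leaves this exclusivity claim implicit, so this is a genuine (small) addition on your part.
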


\begin{proof}[Proof of Proposition~\ref{prop:classOfk[tt-1y]}]
	Note that $A$ satisfies the property $P_2$
	in $\kk[t, t^{-1}, y]$, see Lemma~\ref{lem:PropertyP_2ForExtMax}. 
	Since $t \cdot t^{-1}  = 1 \in A$,
	it follows that either $t \in A$ or $t^{-1} \in A$.
	Assume that we are not in case ii), i.e. assume 
	that $\kk[t, t^{-1}]$ is not contained in $A$. By applying
	an appropriate automorphism of $\kk[t, t^{-1}, y]$, 
	we can assume that $t \in A$ and hence $t^{-1} \not \in A$. Therefore we get 
	$A_t = A[t^{-1}] = \kk[t, t^{-1}, y]$, since $A$ is maximal.
	This implies that there exists an integer $k \geq 0$ such that
	$t^k y \in A$. Thus the $\kk[t, t^{-1}]$-automorphism
	\[
		\sigma \colon \kk[t, t^{-1}, y] \longrightarrow \kk[t, t^{-1}, y] \, , \quad
		y \longmapsto t^{-k} y 
	\]
	satisfies $y \in \sigma(A)$. Hence we get
	$\sigma(A) \supseteq \kk[t, y]$ and therefore we are in case i).
\end{proof}

The extending maximal $\kk$-subalgebras in case i) of 
Proposition~\ref{prop:classOfk[tt-1y]} 
are then described by Theorem~\ref{thm:Uniqueness}.
Thus we are left with the description of the extending maximal $\kk$-subalgebras
in case ii). In fact, they can be characterized in the following way:
	      
\begin{proposition}
	\label{prop:ExtMaxContainingtandtinv}
	There is a bijection
	\[
		\Phi \colon
		\left\{
			\begin{array}{c}
				\textrm{extending maximal} \\ 
				\textrm{$\kk$-subalgebras of $\kk[t, t^{-1}, y]$} \\
				\textrm{that contain $\kk[t, t^{-1}]$}
			\end{array}
		\right\}
		\longrightarrow
		\left\{
			\begin{array}{c}
				\textrm{extending maximal} \\
				\textrm{$\kk$-subalgebras of $\kk[t, y]$ that} \\
				\textrm{contain $\kk[t]$ and $t$ lies not in} \\
				\textrm{the crucial maximal ideal}
			\end{array}
		\right\}
	\]
	given by $\Phi(A) = A \cap \kk[t, y]$.
\end{proposition}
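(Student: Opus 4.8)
The plan is to show that $\Phi$ is a bijection whose inverse is localization at $t$, namely $B \mapsto B_t = B[t^{-1}]$. Write $R = \kk[t, t^{-1}, y]$ and $S = \kk[t, y]$, so $S_t = R$. Fix an extending maximal $\kk$-subalgebra $A$ of $R$ with $\kk[t, t^{-1}] \subseteq A$, and let $\mm$ be its crucial maximal ideal. The heart of the matter will be to check that $\Phi(A) = A \cap S$ again lies in the target set; injectivity and surjectivity of $\Phi$ are then formal.

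First I would pin down $\mm$ over the coefficients. Since $\kk[t, t^{-1}]$ is a Noetherian domain whose fraction field $\kk(t)$ is not a finitely generated $\kk[t,t^{-1}]$-algebra, Lemma~\ref{lem:ResidueField} yields $\mm \cap \kk[t, t^{-1}] \neq 0$; as $\kk[t, t^{-1}]$ is a principal ideal domain and $t$ is a unit of $A$, this forces $\mm \cap \kk[t, t^{-1}] = (t - \lambda)$ for a unique $\lambda \in \kk^\ast$. Next I would exploit that the inclusion $A \hookrightarrow R$ is a flat epimorphism (the extending case of \cite{FeOl1970Homomorphisms-mini}) and that $\Spec R \to \Spec A$ is an isomorphism onto $\Spec A \setminus \{\mm\}$. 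Inverting $t - \lambda$ gives a flat epimorphism $A[(t-\lambda)^{-1}] \to R[(t-\lambda)^{-1}]$ which is surjective on spectra, because the only missing point $\mm$ lies in $V(t - \lambda)$; a faithfully flat epimorphism is an isomorphism, so $A[(t-\lambda)^{-1}] = R[(t-\lambda)^{-1}]$. Setting $B = A \cap S$, elementary manipulation of these localizations then gives $B[(t-\lambda)^{-1}] = S[(t-\lambda)^{-1}]$ and, since $t$ is already a unit of $A$, $B_t = A$.

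The main obstacle is to deduce that $B$ is an extending maximal $\kk$-subalgebra of $S$ containing $\kk[t]$ with $t$ outside its crucial maximal ideal. For maximality, take $B \subseteq C \subseteq S$; localizing at $t$ and using that $A$ is maximal forces $C_t = A$ or $C_t = R$. In the first case $C \subseteq C_t \cap S = A \cap S = B$. In the second case there are $c_1, c_2 \in C$ equal to $t^n y$ and to $(t - \lambda)^m y$ respectively --- the latter because $C[(t-\lambda)^{-1}] = S[(t-\lambda)^{-1}]$ --- and a B\'ezout identity for the coprime polynomials $t^n$ and $(t - \lambda)^m$ over $\kk[t] \subseteq C$ produces $y \in C$, so $C = S$. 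Thus $B$ is maximal in $S$. The displayed localizations also show that $\Spec S \to \Spec B$ is an isomorphism over the complement of $V(t - \lambda)$ and misses exactly the point $\mm \cap B$; hence $B$ is extending and its crucial maximal ideal contains $t - \lambda$, so (as $\lambda \neq 0$) not $t$. Therefore $\Phi(A)$ lies in the target set.

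It remains to check bijectivity. For injectivity, if $\Phi(A_1) = \Phi(A_2) = B$ then $A_i = B_t$ by the computation above, so $A_1 = A_2$. For surjectivity, given $B$ in the target set put $A := B_t \subseteq S_t = R$. Then $\kk[t, t^{-1}] = \kk[t]_t \subseteq A$, and $A$ is a proper subring of $R$ because the image of $\Spec S_t \to \Spec B_t$ still omits the crucial maximal ideal of $B$ (which survives in $\Spec B_t$ precisely because $t$ is not in it); by Lemma~\ref{lem:MaxSubringAndLocalization} $A$ is then a maximal subring of $R$, and it is extending by the same omission. Finally $B \subseteq B_t \cap S \subsetneq S$, so maximality of $B$ gives $\Phi(A) = A \cap S = B_t \cap S = B$, completing the proof.
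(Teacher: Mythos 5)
Your proposal is correct and follows essentially the same route as the paper's proof: both hinge on Lemma~\ref{lem:ResidueField} producing $\lambda\in\kk^\ast$ with $t-\lambda$ in the crucial maximal ideal, the identities $A_{t-\lambda}=\kk[t,t^{-1},y]_{t-\lambda}$ and $(A\cap\kk[t,y])_t=A$, maximality of $A\cap\kk[t,y]$ via localizing an intermediate ring at $t$ and at $t-\lambda$, and surjectivity via $B\mapsto B_t$ with Lemma~\ref{lem:MaxSubringAndLocalization}. Your B\'ezout identity for $t^n$ and $(t-\lambda)^m$ is just a concrete substitute for the paper's intersection $C=C_t\cap C_{t-\lambda}$, and you are somewhat more explicit than the paper in checking that the image subalgebra is extending with $t$ outside its crucial maximal ideal.
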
	      

\begin{proof}
	Let $A$ be an extending maximal $\kk$-subalgebra of 
	$\kk[t, t^{-1}, y]$ that contains $\kk[t, t^{-1}]$. 
	By Lemma~\ref{lem:ResidueField}, there exists 
	$\lambda \in \kk^\ast$ such that $t-\lambda$ lies in the crucial
	maximal ideal $\mm$ of $A$. Thus 
	$A_{t-\lambda} = \kk[t, t^{-1}, y]_{t-\lambda}$.
	Hence there exists $k \geq 1$ such that $(t-\lambda)^k y \in A$ and thus 
	we get
	\[
		\kk[t, t^{-1}, (t-\lambda)^k y] \subseteq A \subsetneq \kk[t, t^{-1}, y] \, .
	\]
	This implies
	\[
		\kk[t, (t-\lambda)^k y] \subseteq A \cap \kk[t, y] \subsetneq \kk[t, y] \, .
	\]
	We claim, that $A \cap \kk[t, y]$ is a maximal subring of $\kk[t, y]$. 
	Let therefore $A \cap \kk[t, y] \subseteq B \subseteq \kk[t, y]$ 
	be an intermediate ring. Thus we get
	\[
		B = B_{t -\lambda} \cap B_t \supseteq \kk[t, y]Ê\cap B_t \supseteq B \, .
	\]
	One can check that 
	$A = (A \cap \kk[t, y])_t$. Since $A$ is maximal in $\kk[t, t^{-1}, y]$, we get 
	either $B_t = A$ or $B_t = \kk[t, t^{-1}, y]$ and the claim follows.
	This proves that $\Phi$ is well-defined and injective.
	
	Let $A'$ be an extending maximal $\kk$-subalgebra of $\kk[t, y]$ that contains
	$\kk[t]$ and the crucial maximal ideal does not contain $t$.
	By Lemma~\ref{lem:MaxSubringAndLocalization} it follows that $A'_t$
	is an extending maximal $\kk$-subalgebra of $\kk[t, t^{-1}, y]$ that contains
	$\kk[t, t^{-1}]$. Moreover, we have $A'_t \cap \kk[t, y] = A$
	by the maximality of $A$ in $\kk[t, y]$. This proves the surjectivity
	of $\Phi$.
\end{proof}
	     
After this proposition, one is now reduced to the problem of the description
of all maximal $\kk$-subalgebras of $\kk[t, y]$ that contain $\kk[t]$.
	     
\section{\texorpdfstring
	{Classification of the maximal $\kk$-subalgebras of $\kk[t, y]$ 
	that contain $\kk[t]$}
	{Classification of the maximal k-subalgebras of k[t, y] 
	that contain k[t]}}
\label{ClassWithCoord.sec}
	      	      
Let $\MMM$ be the set of extending maximal $\kk$-subalgebras of 
$\kk[t, y]$ that contain $\kk[t]$. The goal of this section is to describe the set
$\MMM$ with the aid of the classification result Theorem~\ref{thm:Uniqueness}.
For this we introduce a subset $\NNN$ of the maximal $\kk$-subalgebras
of $\kk[t, y, y^{-1}]$ that contain $\kk[t, y^{-1}]$.
     
\begin{remark}
	\label{rem:TothesetN}
	If $A$ is an extending maximal $\kk$-subalgebra of $\kk[t, y, y^{-1}]$
	that contains $\kk[t, y^{-1}]$, then the residue field of the crucial maximal
	ideal is  isomorphic to $\kk$, by Remark~\ref{rem:class} and 
	Theorem~\ref{thm:Uniqueness}. Hence there exists a unique 
	$\lambda \in \kk$ such that $t-\lambda$ lies in 
	the crucial maximal ideal of $A$.
\end{remark}     

Define $\NNN$ to be the set of extending maximal $\kk$-subalgebras $A$ of 
$\kk[t, y, y^{-1}]$ that contain $\kk[t, y^{-1}]$ and such that
\begin{equation}
	\label{eq:isomor}
	 A \longrightarrow \kk[t, y, y^{-1}] / (t-\lambda)
\end{equation}
is surjective where $\lambda$ denotes the unique
element in $\kk$ such that the crucial maximal ideal contains $t - \lambda$
(see Remark~\ref{rem:TothesetN}).
Now, we can formulate the main result of this section.

\begin{theorem}
	\label{thm:MandN}
	The map $\Theta \colon \NNN \to \MMM$, $A \mapsto A \cap \kk[t, y]$
	is bijective.
\end{theorem}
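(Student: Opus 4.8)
The plan is to show $\Theta$ is bijective by exhibiting its inverse $B \mapsto B[y^{-1}]$, where $B[y^{-1}]$ denotes the subring of $R := \kk[t,y,y^{-1}] = \kk[t,y]_y$ generated by $B$ and $y^{-1}$. The basic tool is a decomposition: if a subring $A \subseteq R$ contains $\kk[t,y^{-1}]$, then every $a \in A$ writes as $a = a_{+} + a_{-}$ with $a_{+}$ the sum of the monomials of non-negative $y$-degree and $a_{-}$ the rest; since $a_{-} \in \kk[t,y^{-1}] \subseteq A$ we get $a_{+} = a - a_{-} \in A \cap \kk[t,y]$, while $a_{-} \in \kk[t][y^{-1}]$. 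Hence $A = (A \cap \kk[t,y])[y^{-1}]$ for every such $A$, which gives injectivity of $\Theta$ and the relation $\Theta^{-1}\circ\Theta = \mathrm{id}$ once $\Theta^{-1}$ is defined, and by symmetry also $B[y^{-1}] \cap \kk[t,y] = B$ whenever $B = A \cap \kk[t,y]$ for some $A \supseteq \kk[t,y^{-1}]$. So it remains to prove: (i) for $A \in \NNN$, the ring $B := A \cap \kk[t,y]$ lies in $\MMM$; and (ii) for $B \in \MMM$, the ring $B[y^{-1}]$ lies in $\NNN$.

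For the maximality assertions I would localize to a discrete valuation ring and glue back. Fix $A \in \NNN$, write $\lambda$ for the element with $t-\lambda$ in the crucial maximal ideal $\mm$ of $A$, and set $\Sigma := \kk[t] \setminus (t-\lambda)$, $\mathcal{O} := \kk[t]_{(t-\lambda)}$, so that $\kk[t,y]_\Sigma = \mathcal{O}[y]$ and $R_\Sigma = \mathcal{O}[y,y^{-1}]$. Since $\mm \cap \Sigma = \emptyset$ and $\Spec R \to \Spec A$ is an open immersion missing $\mm$, the induced $\Spec R_\Sigma \to \Spec A_\Sigma$ misses $\mm A_\Sigma$; so $A_\Sigma \neq R_\Sigma$ and $A_\Sigma$ is a maximal subring of $R_\Sigma$ by Lemma~\ref{lem:MaxSubringAndLocalization}. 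Likewise $A_{t-\lambda} = R_{t-\lambda}$ (a faithfully flat epimorphism is an isomorphism), so $(t-\lambda)^{k}y \in A \cap \kk[t,y] = B$ for some $k \geq 1$. As localization commutes with the intersection, $B_\Sigma = A_\Sigma \cap \mathcal{O}[y]$, and the decomposition gives $A_\Sigma = B_\Sigma[y^{-1}]$, $\mathcal{O} \subseteq B_\Sigma \subsetneq \mathcal{O}[y]$ with $(t-\lambda)^{k}y \in B_\Sigma$. Granting the key fact that $B_\Sigma$ is maximal in $\mathcal{O}[y]$, I conclude as follows: first $A_\Sigma \cap \kk[t,y] = B$, because if $sg \in A$ with $s \in \Sigma$ and $g \in \kk[t,y]$ then $\omega(s) = 0$ for the valuation $\omega$ of $A_\Sigma$ from Remark~\ref{rem:valuation}, so $sg \in A \Leftrightarrow g \in A$; hence for any $B \subsetneq C \subseteq \kk[t,y]$ we have $C_\Sigma = B_\Sigma$ or $C_\Sigma = \mathcal{O}[y]$, the former giving $C \subseteq C_\Sigma \cap \kk[t,y] = B$ (a contradiction) and the latter giving $sy \in C$ for some $s \in \Sigma$, whence, since $\gcd(s,(t-\lambda)^{k}) = 1$ in $\kk[t]$, an identity $1 = ps + q(t-\lambda)^{k}$ with $p,q \in \kk[t] \subseteq C$ yields $y = p\,(sy) + q\,((t-\lambda)^{k}y) \in C$ and so $C = \kk[t,y]$. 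Thus $B$ is maximal; it is extending because otherwise, writing $\kk[t,y] = \sum_j B g_j$, the ring $R = \kk[t,y][y^{-1}] = \sum_j B[y^{-1}]g_j$ would be finite over $A = B[y^{-1}]$. For (ii) the same localized analysis runs backwards — using Lemma~\ref{lem:ResidueField} (with $C = \kk[t]$, whose fraction field is not a finitely generated $\kk[t]$-algebra) to supply the relevant $\lambda$ — and shows $B[y^{-1}]$ is extending maximal in $R$, contains $\kk[t,y^{-1}]$, and meets $\kk[t,y]$ in $B$.

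Two points then remain for (ii): the surjectivity condition for $A := B[y^{-1}]$, and — the real obstacle — the maximality of $B_\Sigma$ in $\mathcal{O}[y]$. For the first: it suffices that $B$ contain an element whose reduction modulo $t-\lambda$ has positive $y$-degree, since such a reduction generates $\kk[y,y^{-1}]$ over $\kk[y^{-1}]$, making $A \to R/(t-\lambda) = \kk[y,y^{-1}]$ surjective; and if $B$ had no such element, then $B \subseteq \kk[t] + (t-\lambda)y\,\kk[t,y] =: D_\lambda \subsetneq \kk[t,y]$, so maximality of $B$ would force $B = D_\lambda$, which is impossible because $D_\lambda$ is not maximal — by Lemma~\ref{lem:maxandcond} its maximality would amount to $D_\lambda/(t-\lambda)\kk[t,y] = \kk$ being a maximal subring of $\kk[y]$. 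For the maximality of $B_\Sigma$ in $\mathcal{O}[y]$, the $\NNN$-condition is indispensable: without it one meets $A_\Sigma = \mathcal{O}[y^{-1}] + (t-\lambda)\mathcal{O}[y,y^{-1}]$, whose intersection with $\mathcal{O}[y]$ is $\mathcal{O} + (t-\lambda)y\,\mathcal{O}[y]$, which is not maximal. I would prove it by the valuation-ring method of Section~\ref{ClassContainKty.sec}: the surjectivity condition yields an element of $B_\Sigma$ of positive $y$-degree modulo $t-\lambda$; this forces $B_\Sigma = \{\,g \in \mathcal{O}[y] : \omega(\bar g) \geq 0\,\}$ with $\bar g$ taken modulo the conductor of $A_\Sigma$ in $R_\Sigma$; and maximality then follows from the degree-one criterion of Proposition~\ref{prop:red} together with the description of the crucial maximal ideal in Remark~\ref{rem:class}. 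Carrying out this step carefully, distinguishing the zero- and nonzero-conductor cases, is the bulk of the argument; the rest is the bookkeeping above.
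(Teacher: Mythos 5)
Your reduction of injectivity to the identity $A=(A\cap\kk[t,y])[y^{-1}]$, valid for any subring of $\kk[t,y,y^{-1}]$ containing $\kk[t,y^{-1}]$, is correct and in fact slightly cleaner than the paper's route (the paper proves the weaker-looking $A_h=(A\cap\kk[t,y])_h$ for a suitable $h$ and recovers $A$ by intersecting back). Your verification that $\Theta(A)$ is \emph{extending} and your argument for the surjectivity of $B[y^{-1}]\to\kk[y,y^{-1}]$ via the non-maximality of $\kk[t]+(t-\lambda)y\,\kk[t,y]$ are also fine. The problem is that everything you do hinges on the single assertion that $B_\Sigma=A_\Sigma\cap\mathcal O[y]$ is a maximal subring of $\mathcal O[y]$ (and its mirror image for the surjectivity of $\Theta$), and this assertion is not proved: you defer it to ``the valuation-ring method of Section~\ref{ClassContainKty.sec}'', but the tools you cite do not apply in this situation. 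Proposition~\ref{prop:red} and Remark~\ref{rem:class} concern subrings of $K[y]$ containing $K^+[y]$ for an \emph{algebraically closed} field $K$ inside the Hahn field, with $K^+$ the valuation ring of $\nu$; their proofs (via Proposition~\ref{prop:rep} and Lemma~\ref{lem:P2}) factor polynomials into linear factors over $K$. Here the coefficient ring is the discrete valuation ring $\mathcal O=\kk[t]_{(t-\lambda)}$ of the non-closed field $\kk(t)$, and — more seriously — the valuation $\omega$ that cuts out $B_\Sigma$ is the $y^{-1}$-adic one coming from Theorem~\ref{thm:Uniqueness} with the roles of $t$ and $y$ interchanged, so $B_\Sigma\subseteq\mathcal O[y]$ is not an instance of the $K^+[y]\subseteq A\subseteq K[y]$ setup in either variable. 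Establishing maximality of this ``transposed'' intersection is exactly the hard content of the theorem, and it is what the paper's Lemmas~\ref{helping.lem}, \ref{lem:openimmersion} and \ref{maxsub.lem} supply.

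Note also why your choice of localization makes the gap unavoidable rather than a routine verification: since $y^{-1}\in A_\Sigma$ but $y^{-1}\notin\mathcal O[y]$, one has $B_\Sigma\subsetneq A_\Sigma$, so the maximality of $B_\Sigma$ in $\mathcal O[y]$ cannot be inherited from the maximality of $A$ via Lemma~\ref{lem:MaxSubringAndLocalization}. The paper's device is to localize instead at one element $h$ of $B$ outside the crucial maximal ideal, chosen (Lemma~\ref{helping.lem}) so that $y^{-1}\in B_h$; then $B_h=A_h$ and the maximality of the localized ring \emph{is} inherited for free, after which the only remaining work is geometric (Lemma~\ref{lem:openimmersion}: $\AA^2_{\kk}\to\Spec(B)$ is an open immersion whose complement is the single point $\mm\cap\kk[t,y]$, so any intermediate ring $C$ satisfies $C_h=B_h$ and hence $C\subseteq B_h\cap\kk[t,y]=B$). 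If you want to salvage your $\Sigma$-localization, you would have to develop from scratch a classification of the subrings of $\mathcal O[y]$ containing $\mathcal O$ that are cut out by a $y^{-1}$-adic valuation, treating the zero- and nonzero-conductor cases separately as you say; that is essentially a rewrite of the paper's Section~\ref{ClassWithCoord.sec}, not an application of Section~\ref{ClassContainKty.sec}.
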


\begin{remark}
\label{rem:ClassMaxSubWithCoordinate}
As we classified already all maximal subrings of $\kk[t, y, y^{-1}]$ that
contain $\kk[t, y^{-1}]$ (see Theorem~\ref{thm:Uniqueness}), 
Theorem~\ref{thm:MandN}
gives us a description of all extending maximal $\kk$-subalgebras 
of $\kk[t, y]$ that contain a coordinate
of $\kk[t, y]$ (up to automorphisms of $\kk[t, y]$). 
\end{remark}

\begin{remark}
\label{rem:ReductiontoM0andN0}
Lemma~\ref{lem:ResidueField} implies the following: If $A$
is an extending maximal $\kk$-subalgebra of $\kk[t, y]$ 
which contains $\kk[t]$, then there exists a unique
$\lambda \in \kk$ such that $t-\lambda$ lies in the crucial maximal
ideal of $A$. Thus $\MMM$ is the disjoint union
of the sets 
\[	
 	\MMM_\lambda = \{ \, A \in \MMM \ | \ 
	\textrm{$t-\lambda$ lies in the crucial maximal ideal of $A$} \, \} \, , \quad
	\lambda \in \kk \, .
\]
By Remark~\ref{rem:TothesetN}, $\NNN$ is the disjoint union of the sets
\[
	\NNN_\lambda = \{ \, A \in \NNN \ | \ 
	\textrm{$t-\lambda$ lies in the crucial maximal ideal of $A$} \, \} \, , \quad
	\lambda \in \kk \, .	
\]
Note that we have canonical bijections
\[
	\MMM_0 \longmapsto \MMM_{\lambda} \, , 
	\quad A \longmapsto \sigma_\lambda(A)
	\quad \textrm{and} \quad
	\NNN_0 \longrightarrow 
	\NNN_{\lambda} \, , \quad A \longmapsto \sigma_\lambda(A)
\]
where $\sigma_{\lambda}$ is the automorphism of $\kk[t, y, y^{-1}]$ given by
$\sigma_{\lambda}(t) = t-\lambda$ and $\sigma_{\lambda}(y) = y$. 
Using the fact that for all $A \in \MMM_0$ we have
\[
	\sigma_\lambda(A) \cap \kk[t, y] = \sigma_\lambda(A \cap \kk[t, y]) \, ,
\]
one is reduced for the proof of Theorem~\ref{thm:MandN} 
to proving the following proposition.
\end{remark}

\begin{proposition}
	\label{prop:MandN}
	The map $\NNN_0 \to \MMM_0$, $A \mapsto A \cap \kk[t, y]$
	is bijective.
\end{proposition}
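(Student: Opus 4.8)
The plan is to exhibit an explicit inverse. For $B\in\MMM_0$ set
$\Delta(B)=B[y^{-1}]\subseteq\kk[t,y,y^{-1}]$, i.e. the subring of all $by^{-n}$ with $b\in B$ and $n\ge 0$ (a localization of $B$ ``at the element $y$ of the overring''). I will show that $\Delta$ and $\Theta$ are mutually inverse bijections between $\MMM_0$ and $\NNN_0$. The argument splits into two easy algebraic identities, then two well-definedness statements, the second of which is the real work.

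\textbf{The two identities.} (a) Let $B\in\MMM$. Since $B$ is an extending maximal subalgebra, $\kk[t,y]$ is \emph{not} a finite $B$-module; hence no power of $y$ lies in $B$ (otherwise $1,y,\dots,y^{N-1}$ would generate $\kk[t,y]$ over $B$). Combining this with the property $P_2$ of $B$ in $\kk[t,y]$ (Lemma~\ref{lem:PropertyP_2ForExtMax}) — if $by^{-n}\in\kk[t,y]$ then $b=y^n\cdot(by^{-n})$ forces $by^{-n}\in B$ — gives $\Delta(B)\cap\kk[t,y]=B$. (b) Let $A$ be any subring with $\kk[t,y^{-1}]\subseteq A\subseteq\kk[t,y,y^{-1}]$; writing $a=a_++a_-$ for the parts of non‑negative, resp. strictly negative, $y$‑degree, one has $a_-\in\kk[t,y^{-1}]\subseteq A$, so $a_+=a-a_-\in A\cap\kk[t,y]$, and since $\kk[t,y^{-1}]\subseteq(A\cap\kk[t,y])[y^{-1}]$ as well, $(A\cap\kk[t,y])[y^{-1}]=A$. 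Granting that $\Theta$ and $\Delta$ land in $\MMM_0$, resp. $\NNN_0$, identity (a) says $\Theta\circ\Delta=\mathrm{id}$ and identity (b) says $\Delta\circ\Theta=\mathrm{id}$, so both maps are bijections.

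\textbf{Well-definedness of $\Delta$.} Let $B\in\MMM_0$. Properness of $\Delta(B)$ is (a) (no power of $y$ is in $B$); containment of $\kk[t,y^{-1}]$ is clear; maximality follows from the positive/negative-part trick applied to an intermediate ring $\Delta(B)\subseteq D\subseteq\kk[t,y,y^{-1}]$, which forces $D\cap\kk[t,y]\in\{B,\kk[t,y]\}$ and hence $D\in\{\Delta(B),\kk[t,y,y^{-1}]\}$. That $\Delta(B)$ is extending: a non-extending maximal $\kk$-subalgebra is, by Proposition~\ref{prop:Non-extending} and the corollary after it, of the form $\{f\mid f(x_1)=f(x_2)\}$ or $\{f\mid D_vf=0\}$, and both are incompatible with containing $\kk[t,y^{-1}]$, whose functions separate points and cotangent vectors of $\Spec\kk[t,y,y^{-1}]$ (an open subset of the smooth plane). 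For the last two conditions, I first claim that every $B\in\MMM$ surjects onto $\kk[t,y]/(t)=\kk[y]$: if not, the preimage $B'$ of the image is a proper subring containing $B$, so $B=B'$, and then $t\kk[t,y]$ lies in the conductor of $B$ in $\kk[t,y]$; this conductor is a prime ideal of $\kk[t,y]$, is proper, and contains the prime $t\kk[t,y]$, so it is $t\kk[t,y]$ itself or $(t,y-c)$; the first case makes $B$ equal $\kk+t\kk[t,y]$, which is not maximal (Lemma~\ref{lem:maxandcond}, as $\kk$ is not maximal in $\kk[y]$), and the second gives $y-c\in B$, contradicting $y\notin B$. Consequently $\Delta(B)=B[y^{-1}]$ surjects onto $\kk[y,y^{-1}]$ modulo $t$. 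Finally $t$ lies in the crucial maximal ideal $\mm$ of $\Delta(B)$: the closed subscheme $V=\Spec\bigl(\Delta(B)/\ker(\Delta(B)\to\kk[y,y^{-1}])\bigr)$ of $\Spec\Delta(B)$ is isomorphic to $\Spec\kk[y,y^{-1}]$, hence irreducible of dimension $1$, and it meets the dense open $\Spec\kk[t,y,y^{-1}]$ exactly in $V(t)$; were $V$ contained in that open subscheme it would be clopen in the irreducible $\Spec\Delta(B)$, hence $\varnothing$ or everything, which is absurd, so $\mm\in V$ and $t\in\mm$. Thus $\Delta(B)\in\NNN_0$.

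\textbf{The main obstacle: well-definedness of $\Theta$.} For $A\in\NNN_0$ one must show $B:=A\cap\kk[t,y]$ is a \emph{maximal} subalgebra of $\kk[t,y]$ (the containments $\kk[t]\subseteq B$ and $B\subsetneq\kk[t,y]$ are immediate from $y\notin A$, and once maximality is known $t\in\mm_B$ follows by the connectedness/dimension argument above, using that $B$ again surjects onto $\kk[y]$ mod $t$). Applying the positive/negative-part trick to an intermediate ring $B\subseteq D\subsetneq\kk[t,y]$ and localizing at $y$ reduces, via the maximality of $A$, to excluding the case $D[y^{-1}]=\kk[t,y,y^{-1}]$ (equivalently $y^{N+1}\in D$ for some $N$) with $D\ne\kk[t,y]$. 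This is precisely where the surjectivity hypothesis built into the definition of $\NNN$ is indispensable: it transfers to $B$ (subtract off the negative-in-$y$ parts, which lie in $\kk[t,y^{-1}]\subseteq A$) to produce elements $y^m+t(\cdots)\in B\subseteq D$ for every $m\ge 0$, and one must combine these with $y^{N+1}\in D$ to force $y\in D$, contradicting $D\ne\kk[t,y]$; I expect this implication (or an equivalent control of the conductor of $B$ in $\kk[t,y]$, in the spirit of Proposition~\ref{prop:interscting}) to absorb the bulk of the technical effort. Once $\Theta$ and $\Delta$ are both known to be well-defined, identities (a) and (b) complete the proof that $\Theta\colon\NNN_0\to\MMM_0$ is a bijection with inverse $\Delta$.
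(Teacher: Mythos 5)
Your overall strategy---exhibiting $B\mapsto B[y^{-1}]$ as an explicit inverse, together with the positive/negative-part identity $(A\cap\kk[t,y])[y^{-1}]=A$ for any ring $A$ with $\kk[t,y^{-1}]\subseteq A\subseteq\kk[t,y,y^{-1}]$---is attractive, and that identity is correct and would give injectivity more cheaply than the paper's Lemma~\ref{maxsub.lem}. But the proof is not complete: the well-definedness of $\Theta$, i.e.\ that $A\cap\kk[t,y]$ is a \emph{maximal} subring of $\kk[t,y]$ for $A\in\NNN_0$, is exactly the point you defer (``I expect this implication \dots to absorb the bulk of the technical effort''), and it is the core of the proposition. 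Your reduction leaves an intermediate ring $B\subseteq D\subsetneq\kk[t,y]$ over which $y$ is integral (note that $y\in D[y^{-1}]$ yields a monic relation $y^{N+1}=\sum_i d_iy^{N-i}$, not $y^{N+1}\in D$), and no argument is supplied to exclude it. In the paper this is precisely the content of Lemma~\ref{helping.lem}, Lemma~\ref{lem:openimmersion} and the first half of Lemma~\ref{maxsub.lem}: one proves that $\AA^2_{\kk}\to\Spec(A\cap\kk[t,y])$ is an open immersion missing only the crucial point, which requires producing the element $y(\lambda^b-t^by^a)^k\in A\cap\kk[t,y]$ from the valuation-theoretic description of $A$ (Theorem~\ref{thm:Uniqueness} and Remark~\ref{rem:valuation}), and then localizes at a suitable $h$ outside the crucial maximal ideal. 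Nothing in your sketch substitutes for this step.

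Several smaller steps are also flawed, though repairable. First, $B[y^{-1}]$ is not the set of single terms $by^{-n}$ (that set need not be closed under addition), so your $P_2$ argument for $\Delta(B)\cap\kk[t,y]=B$ does not cover general elements $\sum_i b_iy^{-i}$; the correct route is that $y\in B[y^{-1}]$ would make $y$ integral over $B$, hence $\kk[t,y]$ a finite $B$-module, contradicting that $B$ is extending, and then maximality of $B$ forces $\Delta(B)\cap\kk[t,y]=B$. Second, in the surjectivity-mod-$t$ argument, if the conductor equals $t\kk[t,y]$ it does not follow that $B=\kk+t\kk[t,y]$; what is true is that $B$ would then be non-extending (its image in $\kk[y]$ would be a non-extending maximal subring, and finiteness lifts), which suffices for the contradiction. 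Third, your topological argument that $t$ lies in the crucial maximal ideal of $\Delta(B)$ is broken: $V$ being contained in the open part $\Spec\kk[t,y,y^{-1}]$ would not make $V$ open in $\Spec\Delta(B)$, and in fact the crucial point does \emph{not} lie on $V$ (for a typical $A\in\NNN_0$ an element such as $ty$ lies in $t\kk[t,y,y^{-1}]\cap A$ but not in the crucial maximal ideal). The conclusion $t\in\mm_{\Delta(B)}$ is nevertheless true, for a different reason: $t\in\mm_B$ gives $B_t=\kk[t,t^{-1},y]$, hence $y\in\Delta(B)_t$, hence $\Delta(B)_t=\kk[t,y,y^{-1}]_t$, which forces $t\in\mm_{\Delta(B)}$.
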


%In the end of this section we interpret
%the map $\NNN \to \MMM$ in geometric terms, see Proposition~\ref{prop:glueing}

For the proof of Proposition~\ref{prop:MandN} we need several
(technical) lemmas.

\begin{lemma}
	\label{helping.lem}
	Let $\kk[t] \subseteq Q \subsetneq \kk[t, y]$ be an intermediate ring
	that satisfies the $P_2$ property in $\kk[t, y]$ and assume that
	\[
		Q \longrightarrow \kk[t, y] / t \kk[t, y]
	\]
	is surjective. If $\pp \subseteq Q$ is an ideal that contains $t$ and
	that does not contain $t \kk[t, y] \cap Q$, 
	then there exists $h \in Q \setminus \pp$
	such that $y^{-1} \in Q_h$.
\end{lemma}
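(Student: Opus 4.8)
The plan is to turn the conclusion into a statement about a single ideal of $Q$, and then verify that statement by an elementary splitting of an element of $t\kk[t,y]\cap Q$.

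First I would record that $y\notin Q$: otherwise $Q\supseteq\kk[t][y]=\kk[t,y]$, contradicting $Q\subsetneq\kk[t,y]$. Next I claim that for $0\ne h\in Q$ one has $y^{-1}\in Q_h$ if and only if $y$ divides $h$ in $\kk[t,y]$. If $y^{-1}=a/h^n$ with $a\in Q$ and $n\ge 0$, then $h^n=ay$ holds in $\kk[t,y]$, so $y\mid h$ since $\kk[t,y]$ is factorial and $y$ is prime. Conversely, if $h=yh_1$ with $h_1\in\kk[t,y]$, then the property $P_2$ applied to the product $y\cdot h_1=h\in Q$, together with $y\notin Q$, gives $h_1\in Q$, and hence $y^{-1}=h_1/h\in Q_h$. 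Consequently it suffices to exhibit some $h\in Q\setminus\pp$ that is divisible by $y$ in $\kk[t,y]$; writing $J=y\kk[t,y]\cap Q$, this amounts to showing $J\not\subseteq\pp$.

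To prove $J\not\subseteq\pp$ I would argue by contradiction, assuming $J\subseteq\pp$. Since $t\kk[t,y]\cap Q\not\subseteq\pp$, pick $c\in (t\kk[t,y]\cap Q)\setminus\pp$ and split it according to its value at $y=0$:
\[
	c=\bigl(c-c(t,0)\bigr)+c(t,0),\qquad c(t,0)\in\kk[t].
\]
Because $\kk[t]\subseteq Q$ we have $c(t,0)\in Q$, hence $c-c(t,0)\in Q$; moreover $c-c(t,0)$ vanishes at $y=0$, so it is divisible by $y$ in $\kk[t,y]$, i.e.\ $c-c(t,0)\in J\subseteq\pp$. On the other hand $c\in t\kk[t,y]$, so $c(t,0)\in t\kk[t]\subseteq tQ\subseteq\pp$, the last step because $t\in\pp$. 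Adding the two, $c\in\pp$, a contradiction; hence $J\not\subseteq\pp$, and any $h\in J\setminus\pp$ works.

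The only genuinely substantive point is the reformulation in the second paragraph — using $P_2$ (and $y\notin Q$) to convert ``$y^{-1}\in Q_h$'' into plain divisibility by $y$ inside $\kk[t,y]$. Once that is in place, the production of $h$ is immediate from the one‑line decomposition above; in particular I do not expect the surjectivity of $Q\to\kk[t,y]/t\kk[t,y]$ to be required for this statement, even though it is the natural ambient hypothesis in this section. If, contrary to my expectation, the decomposition above did not suffice for the ideal $\pp$ at hand, the fallback would be to bring in the surjectivity, which identifies $Q/(t\kk[t,y]\cap Q)\cong\kk[y]$ and thereby controls $\pp$ through the structure of $\kk[y]$.
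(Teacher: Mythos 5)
Your proof is correct, and it takes a genuinely shorter route than the paper's. The paper also starts from an element $ty^{n}g\in Q\setminus\pp$ with $g\notin y\kk[t,y]$, but then runs a minimality argument on $\deg_y(g)$ (repeatedly subtracting $g(t,0)\in\kk[t]\subseteq Q$ and invoking $P_2$) to arrange $g\in Q$; with $h=ty^{n}g$ it obtains $y^{-n}=tg/h\in Q_h$ and finally uses that $Q_h$ inherits the $P_2$ property in $\kk[t,y]_h$ to extract $y^{-1}$ from $y^{-n}$. You instead isolate the clean equivalence ``$y^{-1}\in Q_h$ if and only if $y\mid h$ in $\kk[t,y]$'' (one application of $P_2$ together with $y\notin Q$), which converts the lemma into the single assertion $y\kk[t,y]\cap Q\not\subseteq\pp$, and that follows in one line from the decomposition $c=\bigl(c-c(t,0)\bigr)+c(t,0)$ of any $c\in(t\kk[t,y]\cap Q)\setminus\pp$, since $c(t,0)\in t\kk[t]\subseteq tQ\subseteq\pp$. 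This dispenses with both the degree-minimality argument and the localized $P_2$ step, at the cost of nothing. Your remark that the surjectivity of $Q\to\kk[t,y]/t\kk[t,y]$ is not needed is also consistent with the paper: its proof likewise uses only $\kk[t]\subseteq Q$, the $P_2$ property, $t\in\pp$ and $t\kk[t,y]\cap Q\not\subseteq\pp$; the surjectivity hypothesis is simply carried along from the situations in which the lemma is applied.
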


\begin{proof}
	By assumption, there exists $g \in \kk[t, y] \setminus y \kk[t, y]$ 
	and $n \geq 0$ such that 
	\begin{equation}
		\label{eq4}
		ty^n g \in Q \setminus \pp \, . 
	\end{equation}
	Let $g_0 \in \kk[t]$, $g' \in \kk[t, y] \setminus y \kk[t, y]$ 
	and $r \geq 1$ such that $g-g_0 = y^r g'$.
	If $n = 0$, we get 
	\[
		ty^r g' = t g - t g_0 \in Q \setminus \pp \, .
	\]
	Thus we can and will assume that $n \geq 1$. Now, choose
	$g \in \kk[t, y] \setminus y \kk[t, y]$ of minimal $y$-degree such that 
	$\eqref{eq4}$ is satisfied for some $n \geq 1$.
	We claim that $g \in Q$. Otherwise, $\deg_y(g) > 0$ and 
	$ty^n \in Q$, since $Q$ satisfies the $P_2$ property in $\kk[t, y]$.
	In fact, since $g$ is of minimal 
	$y$-degree, we get $ty^n \in \pp$.
	Thus we get a contradiction to the fact that
	\[
		t y^{n+r}g' = t y^n g - t y^n g_0
		\in Q \setminus \pp \quad \textrm{and} \quad
		\deg_y(g') < \deg_y(g) \, .
	\]
	
	Let $h = ty^n g \in Q \setminus \pp$.
	Since $t, g \in Q$, it follows that $y^{-n} = tg / h \in Q_h$.
	Since $Q$ satisfies the property $P_2$ in $\kk[t, y]$, 
	the localization $Q_h$ satisfies the property $P_2$ in $\kk[t, y]_h = 
	\kk[t, t^{-1}, y, y^{-1}]_g$. Hence, we get $y^{-1} \in Q_h$.
\end{proof}

\begin{lemma}
	\label{lem:openimmersion}
	Let $A \in \NNN_0$ and let $\mm$ be the crucial maximal ideal of $A$.
	Then the inclusion $A \cap \kk[t, y] \subseteq \kk[t, y]$ defines
	an open immersion
	\[
		\varphi \colon \AA^2_{\kk} \longrightarrow \Spec A \cap \kk[t, y]
	\]
	on spectra and the complement of the image of $\varphi$ consists 
	only of the maximal ideal $\mm \cap \kk[t, y]$ of $A \cap \kk[t, y]$.
\end{lemma}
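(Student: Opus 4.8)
The plan is to identify $A \cap \kk[t, y]$ as the ring of global sections of a suitable open subscheme of an affine scheme obtained from $\Spec A$, and to pin down which point has been removed. Set $B = A \cap \kk[t, y]$ and write $\mm$ for the crucial maximal ideal of $A \in \NNN_0$; recall $t \in \mm$ by definition of $\NNN_0$. First I would recall from Remark~\ref{rem:ReductiontoM0andN0} and the preceding discussion that $A$ contains $\kk[t, y^{-1}]$, so $A_t = (A \cap \kk[t, y^{-1}])_t = \kk[t, t^{-1}, y, y^{-1}]$, and in particular $B_t = \kk[t, t^{-1}, y]$ (using that $t^k y^{-1}$ or $t^k y$ lies in $B$ for suitable $k$, so after inverting $t$ one recovers all of $\kk[t, t^{-1}, y]$; this should be spelled out carefully since it is the analogue of the computation in the proof of Proposition~\ref{prop:ExtMaxContainingtandtinv}). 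Thus away from $V(t)$ the inclusion $B \subseteq \kk[t, y]$ is an isomorphism on spectra, and I only need to analyze what happens over $V(t)$.

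Next I would use the surjectivity condition $\eqref{eq:isomor}$: since $A \to \kk[t, y, y^{-1}]/(t)$ is surjective, and $t \in \mm$, this is exactly the hypothesis needed to apply Lemma~\ref{helping.lem} with $Q = B$ (note $B \to \kk[t, y]/t\kk[t, y]$ is surjective because $B \to \kk[t, y, y^{-1}]/(t)$ contains $\kk[y, y^{-1}]$ in its image, hence contains $\kk[y]$, hence is onto $\kk[y] = \kk[t,y]/(t)$). For any prime $\pp$ of $B$ containing $t$ but not containing the conductor-type ideal $t\kk[t, y] \cap B$, Lemma~\ref{helping.lem} produces $h \in B \setminus \pp$ with $y^{-1} \in B_h$; combined with $B_t = \kk[t, t^{-1}, y]$ this will show that $B_{th}$ or rather $B_h$ agrees with $\kk[t,y]_h$ after also noting $t^k y^{-1} \in B$, so the map $\Spec \kk[t,y] \to \Spec B$ is an isomorphism locally near such $\pp$. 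The upshot is that the only point of $\Spec B$ not in the image, and the only point where the map fails to be a local isomorphism, lies over the locus where $t\kk[t,y] \cap B \subseteq \pp$ together with $t \in \pp$ — and since $t\kk[t,y]\cap B$ is (by Lemma~\ref{lem:maxandcond}-type reasoning, cf. the conductor) related to $\mm$, this forces $\pp = \mm \cap \kk[t, y]$. Here I would invoke that $\mm = \mm\,\kk[t,y]$-type behavior or directly that the crucial maximal ideal of $A$ restricts to a maximal ideal of $B$ with residue field $\kk$ (using Remark~\ref{rem:class}), so that $\mm \cap \kk[t, y]$ is genuinely a closed point.

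Finally, to conclude that $\varphi \colon \AA^2_\kk = \Spec \kk[t, y] \to \Spec B$ is an \emph{open immersion} and not merely a bijection onto a constructible set, I would argue as in the proof of Theorem~\ref{thm:OneDimensional} / Proposition~\ref{prop:interscting}: the map is an isomorphism over $\Spec B \setminus \{\mm \cap \kk[t,y]\}$ by the two steps above, it is dominant and injective, and the source $\AA^2_\kk$ is a smooth surface; one then checks that the complement of the image is exactly the single point $\mm \cap \kk[t, y]$ by showing every prime of $B$ other than this one is in the image (every prime of $B$ either does not contain $t$, handled by $B_t = \kk[t,t^{-1},y]$, or contains $t$ but not $t\kk[t,y]\cap B$, handled by Lemma~\ref{helping.lem}, and the remaining one is $\mm \cap \kk[t,y]$ itself). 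Since a birational morphism of schemes that is an open immersion away from one point and whose image omits exactly that point is an open immersion onto that open set, we are done.

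The main obstacle I anticipate is the bookkeeping around the ideal $t\kk[t, y] \cap B$ and showing that the \emph{only} prime of $B$ containing $t$ and containing $t\kk[t, y] \cap B$ is the crucial maximal ideal's contraction $\mm \cap \kk[t, y]$ — i.e., correctly matching Lemma~\ref{helping.lem}'s hypothesis ``$\pp$ does not contain $t\kk[t,y]\cap Q$'' against the structure of $\mm$. This is where the surjectivity hypothesis defining $\NNN_0$ is essential and must be used with care; without it, $\mm \cap \kk[t,y]$ could fail to be a point with residue field $\kk$, or there could be extra points over $V(t)$ outside the image.
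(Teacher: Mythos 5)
There is a genuine gap, and it comes in two related pieces. First, you have the roles of the ideal $t\kk[t,y]\cap B$ reversed. A prime $\qq$ of $\kk[t,y]$ with $t\in\qq$ contracts to a prime of $B$ that \emph{contains} $t\kk[t,y]\cap B$; so the primes of $B$ containing $t$ and containing $t\kk[t,y]\cap B$ are exactly the ones hit by $V(t)\subseteq\AA^2_{\kk}$ (this is where the surjectivity of $B\to\kk[t,y]/t\kk[t,y]$ is used), while the primes containing $t$ but \emph{not} containing $t\kk[t,y]\cap B$ are precisely the ones \emph{outside} the image. The paper shows $\rad(tB)=t\kk[t,y]\cap B\cap(\mm\cap\kk[t,y])$ and deduces that the only prime of the latter kind is $\mm\cap\kk[t,y]$; in particular $\mm\cap\kk[t,y]$ does \emph{not} contain $t\kk[t,y]\cap B$ (this needs the surjection \eqref{eq:isomor}). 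Your trichotomy ``does not contain $t$ / contains $t$ but not $t\kk[t,y]\cap B$ / the crucial point'' therefore omits the entire one-dimensional family of primes over $V(t)$ that lie in the image, and Lemma~\ref{helping.lem} is being invoked at the one prime where it tells you nothing about local isomorphy (it is used in the paper, at $\pp=\mm\cap\kk[t,y]$, for the quite different purpose of Lemma~\ref{maxsub.lem}).

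Second, and more seriously, nothing in your proposal shows that $\varphi$ is a local isomorphism at the points of $\{0\}\times\AA^1_{\kk}$. Knowing that $B_t=\kk[t,t^{-1},y]$, that $B\to\kk[y]$ is surjective, and that the image is $\Spec(B)\setminus\{\mm\cap\kk[t,y]\}$ gives you an injective birational morphism with the right image, but such a morphism need not be an open immersion (compare the normalization of a cuspidal curve, which is birational and bijective but not an open immersion); your closing sentence assumes exactly what has to be proved. This is where the paper does its real work: using Theorem~\ref{thm:Uniqueness} and Remark~\ref{rem:valuation} it writes $A=\{f\mid \nu(f(\alpha))\ge 0\}$ for some $\alpha\in\kk[[(y^{-1})^{\QQ}]]^+$, shows $\alpha\neq 0$ (here \eqref{eq:isomor} is used again), and produces an explicit element $u=\lambda^b-t^by^a\in B$ with $B_u=\kk[t,y]_u$, whose zero set is disjoint from $V(t)$ in $\AA^2_{\kk}$; then $D(t)\cup D(u)=\AA^2_{\kk}$ and $\varphi$ is an open immersion on each piece. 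Some ingredient of this kind --- a second principal open set covering $V(t)$ on which $B$ and $\kk[t,y]$ agree after localization --- is indispensable and is entirely missing from your argument. (A minor further point: the surjectivity of $B\to\kk[t,y]/t\kk[t,y]$ does not follow from the image ``containing $\kk[y,y^{-1}]$'', which is impossible inside $\kk[y]$; one must split $f=f^++f^-$ in $y=a+tf$ and observe $y-tf^+\in A\cap\kk[t,y]$, as the paper does.)
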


\begin{remark}
	\label{rem:tolemOpenimmersion}
	The proof will show the following: 
	\begin{enumerate}[a)]
	\item the maximal ideal $\mm \cap \kk[t, y]$ of 
	$A \cap \kk[t, y]$ contains $t$ and
	does not contain $t \kk[t, y] \cap A \cap \kk[t, y]$ (see iii) in the proof);
	
	\item the homomorphism 
	$A \cap \kk[t, y] \to \kk[t, y] / t \kk[t, y]$ is surjective
	(see i) in the proof).
	\end{enumerate}
\end{remark}

\begin{proof}[Proof of Lemma~\ref{lem:openimmersion}]
	Let $A' = A \cap \kk[t, y]$ and let $\mm' = \mm \cap \kk[t, y]$. 
	Due to Remark~\ref{rem:TothesetN}, 
	the residue field $A / \mm$ is isomorphic to $\kk$.
	Hence, $\mm'$ is a maximal ideal of $A'$.
	We divide the proof in several steps.
	\begin{enumerate}[i)]
		\item We claim that $\varphi$ induces a closed immersion
			$\{ 0 \} \times \AA_{\kk}^1 \to V_{\Spec(A')}(t)$.
			Due to the surjection~\eqref{eq:isomor},
			there exists $f \in \kk[t, y, y^{-1}]$, $a \in A$ such that
			$y = a + tf$.
			Let $f = f^+ + f^-$ where $f^+ \in \kk[t, y]$ and $\deg_y(f^-) < 0$.
			We have $a + tf^- \in A$ and $tf^+ \in t\kk[t, y]$. Thus we get
			\[
				a + tf^- = y -tf^+ \in A \cap \kk[t, y] = A' \, .
			\]
			This implies that
			\[
				A' / t A' \longrightarrow \kk[t, y] / t\kk[t, y] = \kk[y]
			\]
			is surjective, which implies the claim.
		\item	We claim that $\varphi$ induces an isomorphism
			 $\AA_{\kk}^\ast \times \AA_{\kk}^1 \simeq
			 \Spec (A') \setminus V_{\Spec(A')}(t)$.
			 Since $t \in \mm$, we have 
			$A_t = \kk[t, t^{-1}, y, y^{-1}]$ and 
			thus $t^k y \in A$ for some integer $k$.
			This implies $t^k y \in A'$ and thus $A'_t = \kk[t, t^{-1}, y]$.
		\item We claim that
			$\Spec(A') \setminus \varphi(\AA^2_{\kk}) = \{ \mm' \}$.
			Using i) and ii) this is equivalent to show that 
			$\mm'$ is the only prime ideal of $A'$ that contains $t$
			and does not contain $t \kk[t, y] \cap A'$. 
			
			Since $\mm$ contains $t$ it follows that $\mm'$
			contains $t$. Since there exists no prime ideal of 
			$\kk[t, y, y^{-1}]$ that lies over $\mm$, the surjection~
			\eqref{eq:isomor} implies that 
			$\mm$ does not contain $t \kk[t, y, y^{-1}] \cap A$. 
			Hence there exists 
			$f \in \kk[t, y, y^{-1}]$ such that $t f \in A \setminus \mm$.
			Since $t \kk[t, y^{-1}] \subseteq \mm$, we can even assume
			that $f \in \kk[t, y]$. Hence $t f \in A' \setminus \mm'$ and
			therefore $\mm'$ does not contain $t \kk[t, y] \cap A'$.
			
			As $A \subseteq \kk[t, y, y^{-1}]$ induces an isomorphism
			$\AA_{\kk}^1 \times \AA_{\kk}^\ast \simeq 
			\Spec(A) \setminus \{ \mm \}$ and since $t \in \mm$, we have 
			\[
				\rad(tA) = t \kk[t, y, y^{-1}] \cap A \cap \mm \, .
			\]
			Intersecting with $\kk[t, y]$ yields
			\[
				\rad(tA') = t \kk[t, y] \cap A' \cap \mm'.
			\]
			Thus every prime ideal of $A'$
			that contains $t$ and does not contain 
			$t \kk[t, y] \cap A'$ must be equal to $\mm'$
			(note that $\mm'$ is a maximal ideal of $A'$).
		\item We claim that $\varphi$ is an open immersion.
			According to Theorem~\ref{thm:Uniqueness} and 
			Remark~\ref{rem:valuation}, there exists
			$\alpha \in \kk[[(y^{-1})^{\QQ}]]^+$ such that
			\[
				A = \{ \, f \in \kk[y^{-1}, y, t] \ | \ \nu(f(\alpha)) \geq 0 \, \} \, .
			\]
			Note that $y^{-1}$ corresponds 
			to the $t$ in Theorem~\ref{thm:Uniqueness}
			and $t$ corresponds to the $y$ in Theorem~\ref{thm:Uniqueness}.
			In particular we have $\nu(y^{-1}) = 1$.
			If $\alpha = 0$, then $A = \kk[y^{-1}] + t \kk[y^{-1}, y, t]$ 
			and thus \eqref{eq:isomor}
			is not surjective. Hence $\alpha \neq 0$.
			Let $\nu(\alpha) = a/b$ for integers $a \geq 0$, $b > 0$ and 
			let $\lambda \in \kk^\ast$ be the coefficient
			of $y^{-a/b}$ of $\alpha$. 
			There exists $k \geq 1$ such that
			\begin{equation}
				\label{eq20}
				y (\lambda^b -t^b y^a)^k \in A' \, .
			\end{equation}
			Indeed, $\nu(\lambda^b -\alpha^b y^a) > 0$, since
			$\alpha$ is equal to $\lambda (y^{-1})^{a/b}$ plus 
			higher oder terms in $y^{-1}$. 
			Hence, there exists $k \geq 1$ such that
			\[
				\nu(y (\lambda^b -\alpha^b y^a)^k) = 
				-1 + k \nu(\lambda^b -\alpha^b y^a) \geq 0 \, ,
			\]
			which yields \eqref{eq20}. 
			
			As $A$ satisfies the property $P_2$
			in $\kk[y^{-1}, y, t]$ 
			(see Lemma~\ref{lem:PropertyP_2ForExtMax}), it follows that 
			$A'$ satisfies the property $P_2$ in $\kk[y, t]$. Since $y \notin A'$
			we get thus $\lambda^b-t^b y^a \in A'$ by \eqref{eq20}. 
			Again by \eqref{eq20} we have 
			$y \in A'_{\lambda^b-t^b y^a}$, which implies 
			\[
				A'_{\lambda^b-t^b y^a} = \kk[t, y]_{\lambda^b-t^b y^a} \, .
			\]
			As the zero set of $\lambda^b-t^b y^a$ and of $t$ in 
			$\AA^2_{\kk} = \Spec \kk[t, y]$ are disjoint, it follows with ii) that
			$\varphi \colon \AA^2_{\kk} \to \Spec(A')$ is locally an 
			open immersion. However, i) and ii) imply that $\varphi$ is
			injective and thus $\varphi$ is an open immersion.
%		\item We claim that $\mm'$ is a maximal ideal of $A'$.
%			This is a direct consequence of iii) and iv). 
	\end{enumerate}
\end{proof}

\begin{lemma}
	\label{maxsub.lem}
	Let $A \in \NNN_0$ and let $\mm$
	be the crucial maximal ideal of $A$. 
	Moreover, we denote $A' = A \cap \kk[t, y]$ and 
	$\mm' = \mm \cap \kk[t, y]$.
	Then the following holds:
	\begin{enumerate}[a)]
		\item $A'$ is a maximal subring of $\kk[t, y]$;
		\item $\mm'$ is the crucial maximal ideal of $A'$;
		\item For all $h \in A' \setminus \mm'$
		such that $y^{-1} \in A'_h$ we have 
		\[
			A = A'_h \cap \kk[t, y, y^{-1}]
			\quad \textrm{and} \quad A'_h = A_h \, .
		\]
		Moreover, there exist $h \in A' \setminus \mm'$ with $y^{-1} \in A'_h$.
	\end{enumerate}
\end{lemma}

%\begin{remark}
%	\label{maxsub.rem}
%	If $A \in \NNN_0$ and $\mm \subseteq A$ is the crucial maximal ideal,
%	then $\mm \cap \kk[t, y]$ is the crucial maximal ideal of $A \cap \kk[t, y]$,
%	see Lemma~\ref{lem:openimmersion}. 
%\end{remark}

\begin{proof}[Proof of Lemma~\ref{maxsub.lem}]
%	Let $A' = A \cap \kk[t, y]$ and let $\mm' = \mm \cap A$ where $\mm$ 
%	denotes the crucial maximal ideal of $A$.
	As $A$ satisfies the $P_2$ property in $\kk[t, y, y^{-1}]$,
	$A'$ satisfies the $P_2$ property in $\kk[t, y]$.
	By Remark~\ref{rem:tolemOpenimmersion}, $\mm'$ contains $t$ and
	does not contain $t \kk[t, y] \cap A'$. Moreover, the homomorphism
	$A' \to \kk[t, y] / t \kk[t, y]$
	is surjective according to Remark~\ref{rem:tolemOpenimmersion}.
	Let $h \in A' \setminus \mm'$ such that $y^{-1} \in A'_h$
	(by Lemma~\ref{helping.lem} there exists such an $h$).
	We claim that
	\begin{equation}
		\label{eq201}
		A'_h = A_h \, .
	\end{equation}
	Indeed, if $a = a^+ + a^- \in A$ and $a^+ \in \kk[t, y]$, $\deg_y(a^-) <0$,
	then we get 
	\[
		a^+ = a-a^-\in A \cap \kk[t, y] = A' \, .
	\] 
	However, $a^- \in \kk[t, y^{-1}] \subseteq A'_h$ and thus 
	$a = a^+ + a^- \in A'_h$, which
	implies the claim. Using Lemma~\ref{lem:MaxSubringAndLocalization}
	and the fact that $h \in A' \setminus \mm'$, 
	the claim implies that
	\[
		A'_h \subsetneq \kk[t, y, y^{-1}]_h = \kk[t, y]_h
	\]
	is an extending maximal subring. Now, let $A' \subseteq B \subsetneq \kk[t, y]$
	be an intermediate ring. Since $\varphi \colon \AA^2_{\kk} \to \Spec(A')$
	is an open immersion and 
	$\Spec(A') \setminus \varphi(\AA^2_{\kk}) = \{ \mm' \}$ 
	(see Lemma~\ref{lem:openimmersion}), it follows that $\mm'$
	lies in the image of the morphism $\Spec(B) \to \Spec(A')$. 
	Hence, there exists a prime ideal in $B$ that lies over $\mm'$.
	Since $h \in A' \setminus \mm'$, it follows that there exists a prime ideal
	of $B_h$ that lies over $\mm' A'_h$. In particular, $B_h \neq \kk[t, y]_h$. 
	By the maximality of $A'_h$ in $\kk[t, y]_h$ we get
	$A'_h = B_h$. Thus
	\[
		B \subseteq B_h \cap \kk[t, y] =
		A'_h \cap \kk[t, y] = A_h \cap \kk[t, y] = A' 	
	\]
	where the last equality follows from the fact that 
	\begin{equation}
		\label{eq21}
		A_h \cap \kk[t, y, y^{-1}] = A
	\end{equation}
	(note that $y \not\in A_h$, since otherwise 
	$y h^k \in A \cap \kk[t, y] = A'$ for a certain integer $k$
	and thus $y \in A'_h$, contradicting the maximality of $A'_h$ in $\kk[t, y]_h$). 
	This proves the maximality of $A'$ in $\kk[t, y]$, which is a).
	Equations~\eqref{eq201} and~\eqref{eq21} say, that c) is satisfied. 
	Statement b) is a consequence of statement a) and 
	Lemma~\ref{lem:openimmersion}.
\end{proof}

\begin{proof}[Proof of Proposition~\ref{prop:MandN}]
	From Lemma~\ref{maxsub.lem}~a),b)
	it follows that $\NNN_0 \to \MMM_0$ is well-defined. 
	From Lemma~\ref{maxsub.lem}~c)
	it follows that $\NNN_0 \to \MMM_0$ is injective. 
	
	Now, we prove the surjectivity. 
	Let $Q \in \MMM_0$. We have the following inclusion
	\begin{equation}
		\label{eq23}
		Q / t \kk[t, y] \cap QÊ\subseteq \kk[t, y] / t \kk[t, y] = \kk[y] \, .
	\end{equation}
	On spectra, this map yields an open immersion, since
	$\Spec \kk[t, y] \to \Spec Q $ is an open immersion. Hence,
	\eqref{eq23} is a finite ring extension, and thus \eqref{eq23} 
	must be an equality.	
	This implies that the crucial maximal ideal $\pp$ of $Q$ 
	does not contain $t \kk[t, y] \cap Q$ (note that 
	$\Spec Q \setminus \Spec \kk[t, y] = \{ \pp \}$).
	By assumption, $t \in \pp$. Moreover, $Q$ satisfies the $P_2$ property
	in $\kk[t, y]$ by Lemma~\ref{lem:PropertyP_2ForExtMax}.
	By Lemma~\ref{helping.lem} there
	exists $h \in Q \setminus \pp$ such that $y^{-1} \in Q_h$.
	Thus, Lemma~\ref{lem:MaxSubringAndLocalization} implies that
	\[
		Q_h \subsetneq \kk[t, y]_h = \kk[t, y, y^{-1}]_h
	\]
	is an extending maximal subring. Since $y^{-1} \in Q_h$, the ring
	\[
		Q' = Q_h \cap \kk[t, y, y^{-1}] \subsetneq \kk[t, y, y^{-1}]
	\]
	contains $\kk[t, y^{-1}]$. Now, we divide the proof in several steps.
	
	\begin{enumerate}[i)]
	\item We claim that $Q'$ is a maximal subring
	of $\kk[t, y, y^{-1}]$. Therefore, take an intermediate ring 
	$Q' \subseteq B \subsetneq \kk[t, y, y^{-1}]$. By the maximality of $Q$
	in $\kk[t, y]$ we get $Q = B \cap \kk[t, y]$ and hence
	\[
		Q_h = (B \cap \kk[t, y])_h \, .
	\]
	If $y$ would be in $B_h$, then $y$ would be in 
	$(B \cap \kk[t, y])_h = Q_h$, a contradiction to the fact that
	$Q_h \neq \kk[t, y, y^{-1}]_h$. Hence we have 
	$B_h \neq \kk[t, y, y^{-1}]_h$. The maximality of $Q_h$
	in $\kk[t, y, y^{-1}]_h$ implies that $B_h = Q_h$.
	Hence, we have
	\[
		B \subseteq B_h \cap \kk[t, y, y^{-1}]
		= Q' \subseteq B \, ,
	\]
	which proves the maximality of $Q'$ in $\kk[t, y, y^{-1}]$.
	
	\item We claim that $\pp Q_h \cap \kk[t, y, y^{-1}]$ 
	is the crucial maximal ideal of $Q'$. Clearly, 
	$\pp Q_h$ is the crucial maximal ideal of $Q_h$. 
	If $\pp Q_h \cap \kk[t, y, y^{-1}]$ would not be the 
	crucial maximal ideal of $Q'$, then
	$\Spec Q_h \to \Spec Q'$ would send $\pp Q_h$ to a point of the open
	subset $\Spec \kk[t, y, y^{-1}]$ of 
	$\Spec Q'$. This would imply
	that $\kk[t, y, y^{-1}] \subseteq Q_h$, a contradiction.
	
	\item We claim that $Q' \in \NNN_0$. By ii), $\pp Q_h \cap \kk[t, y, y^{-1}]$
	is the crucial maximal ideal of $Q'$ and it contains $t$.
	By the equality \eqref{eq23} 
	we get $y=q+tf$ for some $q \in Q$, $f \in \kk[t, y]$. 
	Since $Q \subseteq Q'$ and $\kk[t, y^{-1}] \subseteq Q'$, the homomorphism
	\[
		 Q' \longrightarrow \kk[t, y, y^{-1}] / t \kk[t, y, y^{-1}]
	\]
	is surjective. With i) we get $Q' \in \NNN_0$. 
	
	\item We claim that $Q' \cap \kk[t, y] = Q$. This follows from the fact that
	$Q \subseteq Q' \cap \kk[t, y] \subsetneq \kk[t, y]$ and from the maximality
	of $Q$ in $\kk[t, y]$. 
	\end{enumerate}
	This proves the surjectivity.
\end{proof}	      

Let us interpret the map $\NNN_0 \to \MMM_0$, 
$A \mapsto A \cap \kk[t, y]$ in geometric terms. For this we introduce the following
terminology.

\begin{definition}
	We call a dominant morphism $Y \to X$ of affine schemes
	an \emph{(extending) minimal morphism}, if $\Gamma(X, \OOO_X)$ 
	is an (extending) maximal subring of $\Gamma(Y, \OOO_Y)$.
	Moreover, the point in $X$ which corresponds to 
	the crucial maximal ideal of $\Gamma(X, \OOO_X)$ we call
	the \emph{crucial point} of $X$.
\end{definition}

Let us denote by $\pr \colon \AA^2_{\kk} \to \AA_{\kk}^1$ the projection 
$(t, y) \mapsto t$. The set $\MMM_0$ corresponds to the extending minimal
morphisms 
$\psi \colon \AA^2_{\kk} \to X$ such that 
$\pr \colon \AA^2_{\kk} \to \AA_{\kk}^1$ factorizes as
\[
	\AA^2_{\kk} \stackrel{\psi}{\longrightarrow} X \longrightarrow \AA_{\kk}^1 \, ,
\]
and such that the crucial point of $X$
is sent onto $0 \in \AA^1_{\kk}$ via $X \to \AA^1_{\kk}$.
The set $\NNN_0$ corresponds to the extending 
minimal morphisms $\varphi \colon \AA_{\kk}^1 \times \AA_{\kk}^\ast \to Y$ 
such that the open immersion $\AA_{\kk}^1 \times \AA_{\kk}^\ast \to \AA_{\kk}^2$, 
$(t, y) \mapsto (t, y^{-1})$ factorizes as 
\[
	\AA_{\kk}^1 \times \AA_{\kk}^\ast \stackrel{\varphi}{\longrightarrow} Y
	\longrightarrow \AA_{\kk}^2
\]
and such that the image of 
$\{ 0 \} \times \AA_{\kk}^\ast$ under $\varphi$ is closed in $Y$.

\begin{proposition}
	\label{prop:glueing}
	Let $\varphi \colon \AA_{\kk}^1 \times \AA_{\kk}^\ast \to Y$ 
	be an extending 
	minimal morphism corresponding to an element $A \in \NNN_0$. Then 
	\[
		\Spec AÊ\cap \kk[t, y]  = Y \cup_\varphi \AA^2_{\kk}
	\]
	where $Y \cup_\varphi \AA^2_{\kk}$ denotes the glueing via
	$\AA^2_{\kk} \stackrel{\sigma}{\longleftarrow} \AA_{\kk}^1 \times \AA_{\kk}^\ast 
	\stackrel{\varphi}{\longrightarrow} Y$ where $\sigma$
	is the open immersion defined by 
	$\sigma(t, y) = (t, y)$.
\end{proposition}	
	
\begin{proof} 
By Theorem~\ref{thm:MandN} we have the following commutative diagram
\[
	\xymatrix{	
		\AA_{\kk}^1 \times \AA_{\kk}^\ast \ar@{^(->}[d]_{\sigma} 
		\ar[rrr]^-{\varphi}_-{\textrm{extend. minimal mor.}} &&& Y \ar[d] \\
		\AA_{\kk}^1 \times \AA_{\kk}^1 \ar[rrr]_-{\textrm{extend. minimal mor.}} 
		&&& \Spec A \cap \kk[t, y] \, .
	}
\]
By Lemma~\ref{maxsub.lem},
there exists a regular function $h$ on $\Spec A \cap \kk[t, y]$
that does not vanish at the crucial point of $\Spec A \cap \kk[t, y]$ 
and we have 
\[
	A_h = (A \cap \kk[t, y])_h \, .
\] 
Thus $Y \to \Spec A \cap \kk[t, y]$
restricts to an open immersion on $Y_h$. By the commutativity of the diagram, 
it follows that $Y \to \Spec A \cap \kk[t, y]$ restricts to an open
immersion on $\varphi(\AA_{\kk}^1 \times \AA_{\kk}^\ast)$. 
By Lemma~\ref{maxsub.lem}, 
the morphism 
$Y \to \Spec A \cap \kk[t, y]$ maps the crucial point of $Y$ to that one of 
$\Spec A \cap \kk[t, y]$. Hence, $Y_h$ contains the crucial point of $Y$. 
In summary, we get
that $Y \to \Spec A \cap \kk[t, y]$ is an open immersion.
Thus all morphisms in the diagram above are open immersions.
Moreover, $\varphi$ induces an isomorphism 
$\AA_{\kk}^1 \times \AA_{\kk}^\ast \to Y \cap \AA_{\kk}^1 \times \AA_{\kk}^1$ where 
we consider $Y \cap \AA_{\kk}^1 \times \AA_{\kk}^1$ as an open subset of 
$\Spec A \cap \kk[t, y]$. Hence $\Spec A \cap \kk[t, y]$
is the claimed glueing.
\end{proof}	      
	
\section{Acknowledgement}
	We would like to thank J\'er\'emy Blanc for showing us 
	Lemma~\ref{lem:Jeremy}.
	
\section{Funding}
	The second author gratefully acknowledge support by the Swiss National
	Science Foundation (Schweizerischer National Fonds) [148627].
	
\providecommand{\bysame}{\leavevmode\hbox to3em{\hrulefill}\thinspace}
\providecommand{\MR}{\relax\ifhmode\unskip\space\fi MR }
% \MRhref is called by the amsart/book/proc definition of \MR.
\providecommand{\MRhref}[2]{%
  \href{http://www.ams.org/mathscinet-getitem?mr=#1}{#2}
}
\providecommand{\href}[2]{#2}

\end{document}